\pdfoutput=1
\documentclass[10pt]{amsart}
\usepackage{amscd}
\usepackage{amssymb}
\date{2 August 2026}
\title{The Simplicial Cylinder DG Ring}

\usepackage{libertinus}
\usepackage[T1]{fontenc}
\usepackage[scaled=0.95]{roboto}
\usepackage[varbb, varvw, smallerops]{newtxmath}
\usepackage[cal=cm, calscaled=0.95, frak=euler, frakscaled=0.98]{mathalfa}
\usepackage[scaled=0.95]{inconsolata}

\usepackage{tikz}
\usepackage{tikz-cd}
\usetikzlibrary{arrows}
\tikzcdset{arrow style=tikz,
diagrams={>={Stealth[round,length=4pt,width=6pt,inset=3pt]}}}
\tikzcdset{every label/.append style = {font = \footnotesize}}
\usepackage{graphicx}
\usepackage{hyperref}
\hypersetup{colorlinks=false}

\usepackage{comment}
\hypersetup{colorlinks=false}

\author{Amnon Yekutieli}
\address{Department of  Mathematics,
Ben Gurion University, Be'er Sheva 84105, Israel.}
\email{\href{mailto:amyekut@gmail.com}{amyekut@gmail.com}}
\urladdr{\url{https://sites.google.com/view/amyekut-math/home}}

\subjclass[2020]{Primary 16E45; Secondary 18N50}
\keywords{DG rings, simplicial sets.}

\newtheorem{thm}[equation]{Theorem}
\newtheorem{cor}[equation]{Corollary}
\newtheorem{prop}[equation]{Proposition}
\newtheorem{lem}[equation]{Lemma}
\theoremstyle{definition}
\newtheorem{dfn}[equation]{Definition}
\newtheorem{rem}[equation]{Remark}
\newtheorem{exa}[equation]{Example}

\newtheorem{conv}[equation]{Convention}

\numberwithin{equation}{section}

\newcommand{\iso}{\xrightarrow{%
\smash{\raisebox{-0.5ex}{\ensuremath{\scriptstyle \simeq  \mspace{2mu}}}}}}

\newcommand{\xar}{\xrightarrow}
\newcommand{\opn}{\operatorname}
\newcommand{\cat}[1]{\operatorname{\mathsf{#1}}}

\newcommand{\cd}{\,{\cdot}\,}

\newcommand{\rmitem}[1]{\item[\text{\textup{(#1)}}]}

\newcommand{\mrm}[1]{\mathrm{#1}}

\newcommand{\La}{\Lambda}

\newcommand{\si}{\sigma}

\newcommand{\al}{\alpha}
\newcommand{\be}{\beta}
\newcommand{\ga}{\gamma}

\newcommand{\ze}{\zeta}

\newcommand{\K}{\mathbb{K}}
\newcommand{\Z}{\mathbb{Z}}
\newcommand{\N}{\mathbb{N}}
\newcommand{\bi}{\boldsymbol{i}}
\newcommand{\bj}{\bsym{j}}

\newcommand{\bpi}{\boldsymbol{\pi}}
\newcommand{\Simp}{\boldsymbol{\Delta}}

\newcommand{\tup}[1]{\textup{#1}}
\newcommand{\bsym}[1]{\boldsymbol{#1}}
\newcommand{\boplus}{\bigoplus\nolimits}
\newcommand{\ot}{\otimes}

\newcommand{\til}[1]{\tilde{#1}}

\newcommand{\bra}[1]{\langle #1 \rangle}
\renewcommand{\d}{\mathrm{d}}
\newcommand{\pa}{\partial}

\newcommand{\lb}{\linebreak}
\newcommand{\sbmat}[1]{\left[ \begin{smallmatrix} #1
\end{smallmatrix} \right]}
\newcommand{\bmat}[1]{\begin{bmatrix} #1 \end{bmatrix}}
\newcommand{\centover}{\mspace{0.0mu} /_{\mspace{-2mu}
\mrm{c}}\mspace{1.5mu}}
\renewcommand{\over}{\mspace{0mu} / \mspace{-0.5mu}}

\newcommand{\sub}{\subseteq}
\newcommand{\twoto}{\Rightarrow}
\newcommand{\colim}[1]{\underset{#1}{\opn{colim}} \mspace{3mu}}
\newcommand{\mylim}[1]{\displaystyle\lim_{#1} \mspace{3mu}}

\newcommand{\lmsp}{\mspace{1.5mu}}

\newcommand{\bmsp}{\mspace{10mu}}

\begin{document}

\begin{abstract}
Let $A$ and $B$ be DG rings, and let $f_0, f_1 : A \to B$ be DG ring
homomorphisms. The appropriate notion of {\em homotopy}
$\ga : f_0 \twoto f_1$ was introduced by Keller in 1999.
These homotopies are encoded as DG ring homomorphisms from $A$
to the {\em Keller cylinder DG ring} $\opn{Cyl}_{\mrm{Ke}}(B)$.

Recently we discovered that for every $q \in \N$ there is a DG ring
$\opn{Cyl}_q(B)$, which for $q = 1$ equals $\opn{Cyl}_{\mrm{Ke}}(B)$.
The collection
$\opn{Cyl}(B) := \{ \opn{Cyl}(B)_{q} \}_{q \in \N}$ is a {\em simplicial DG
ring}, which we call the {\em simplicial cylinder DG ring} of $B$.
Let us denote by $\opn{SHom}_q(A, B)$
the set of DG ring homomorphisms $A \to \opn{Cyl}_{q}(B)$.
As $q$ varies, we obtain the {\em simplicial set}
$\opn{SHom}(A, B) := \{ \opn{SHom}_q(A, B) \}_{q \in \N}$.

Now assume $\til{A}$ is a {\em semi-free DG ring}.
Our main theorem states that the simplicial
set $\opn{SHom}(\til{A}, B)$ is a {\em Kan complex}.
Let us denote by $\opn{SHom}_{\leq 1}(\til{A}, B)$ the {\em
fundamental groupoid} of this Kan complex.
Its objects are the DG ring homomorphisms $f : \til{A} \to B$,
and its isomorphisms are the $2$-homotopy classes $[\ga]$ of Keller
homotopies $\ga : f_0 \twoto f_1$.

Another theorem states that a surjective DG ring quasi-isomorphism
$\til{B} \to B$ induces an equivalence of groupoids
$\opn{SHom}_{\leq 1}(\til{A}, \til{B}) \to
\opn{SHom}_{\leq 1}(\til{A}, B)$.
It follows that given a DG ring homomorphism $f : A \to B$, and semi-free
resolutions $\til{A} \to A$ and $\til{B} \to B$,
there is a {\em distinguished homotopy class} $[\ga]$
between any two DG ring liftings
$\til{f}_0, \til{f}_1 : \til{A} \to \til{B}$ of $f$.
We also calculate the automorphism group of each object $f$ of the groupoid
$\opn{SHom}_{\leq 1}(\til{A}, B)$, and show that this group is abelian.

Using the results of this paper, we expect to produce
an explicit construction of the $(2, 1)$-derived category of the category
$\cat{DGRng}$ of DG rings.

The category $\cat{DGRng}$ is a full subcategory of the category
$\cat{DGCat}$ of (small)  {\em DG categories}.
Presumably all the results above can be extended from
$\cat{DGRng}$ to $\cat{DGCat}$.
\end{abstract}

\maketitle


\tableofcontents
\mbox{}

\setcounter{section}{-1}
\section{Introduction}

By {\em DG ring} we mean a graded ring
$A = \bigoplus_{i \in \Z} A^i$, with a differential satisfying the
graded Leibniz rule. (Traditionally $A$ is called an associative unital
cochain DG algebra.)
The category of DG rings is denoted by $\cat{DGRng}$.

A DG ring $A$ is the same as a single object DG category $\cat{A}$,
and a DG functor $F : \cat{A} \to \cat{B}$ between single object DG categories
is a DG ring homomorphism $f : A \to B$. Thus $\cat{DGRng}$ is a full
subcategory of $\cat{DGCat}$, the category of (small) DG categories.
For the sake of streamlining the discussion, in this paper we only look at DG
rings. See Remark \ref{rem:248} regarding the potential extension of this work
to DG categories.

Let $A$ and $B$ be DG rings, and let $f_0, f_1 : A \to B$ be DG ring
homomorphisms. A {\em Keller homotopy}
$\ga : f_0 \twoto f_1$ is a degree $-1$ homomorphism of graded abelian groups
$\ga : A \to B$, which is a homotopy on the underlying DG abelian groups, and
is also an $f_0$-$f_1$-derivation.
This concept was introduced in \cite{Ke1}, and
we recall it in Definition \ref{dfn:330}.

It is known that when the DG ring $A$ is semi-free (see Definition
\ref{dfn:180}),
the Keller homotopies form an equivalence relation on the set
$\opn{Hom}_{\cat{DGRng}}(A, B)$. But for an arbitrary DG ring $A$ that might
fail, according to \cite[Correction]{Ke1} and \cite{Ke3}.

The {\em Keller cylinder DG ring} $\opn{Cyl}_{\mrm{Ke}}(B)$ is recalled
in Definitions \ref{dfn:332}. Keller homotopies $\ga : f_0 \twoto f_1$
are encoded by DG ring homomorphisms $g : A \to \opn{Cyl}_{\mrm{Ke}}(B)$.
These ideas were also introduced in the paper \cite{Ke1}, and we review them
in Section \ref{sec:prelims-dg} of the paper.
Some texts, including \cite{Ta}, refer to $\opn{Cyl}_{\mrm{Ke}}(B)$
as the {\em path object} associated to $B$.

Recently we discovered that the Keller cylinder DG ring
$\opn{Cyl}_{\mrm{Ke}}(B)$ is the special case $q = 1$ of a collection of DG
rings $\opn{Cyl}_q(B)$, $q \in \N$. These are naturally
arranged as a simplicial DG ring
$\opn{Cyl}(B) := \{ \opn{Cyl}_{q}(B) \}_{q \in \N}$,
called the {\em simplicial cylinder DG ring of $B$}.
This construction is in Definition \ref{dfn:215} of the paper.

Given DG rings $A$ and $B$, for every $q$ there is the set
\begin{equation} \label{eqn:285}
\opn{SHom}_{q}(A, B) :=
\opn{Hom}_{\cat{DGRng}}(A, \opn{Cyl}_{q}(B)) .
\end{equation}
As $q$ changes, this yields the simplicial set
\begin{equation} \label{eqn:286}
\opn{SHom}_{}(A, B) :=
\bigl\{ \opn{SHom}_{q}(A, B) \bigr\}_{q \in \N} ,
\end{equation}
called the {\em simplicial cylinder Hom set} from $A$ to $B$.
The simplicial set
$\opn{SHom}_{}(A, B)$ is functorial in $A$ and $B$.

The main result of our paper is this:

\begin{thm} \label{thm:150}
Let $\til{A}$ be a semi-free DG ring, and let $B$ be any DG ring.
Then the simplicial set
$\opn{SHom}_{}(\til{A}, B)$
is a Kan complex.
\end{thm}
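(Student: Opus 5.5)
The plan is to turn the Kan extension condition into a lifting problem for DG ring homomorphisms out of $\til{A}$, and then solve that problem using the semi-free structure of $\til{A}$. This rests on one assumption about Definition \ref{dfn:215} that I have not checked. I assume $\opn{Cyl}_q(B)$ is $B \otimes N^*(\Simp^q)$, the tensor product of $B$ with the normalized integral cochain DG ring of the standard $q$-simplex under the cup product. The case $q = 1$ is consistent with this: the vertex idempotents $e_0, e_1$ and the degree $1$ edge class give back the Keller cylinder $\opn{Cyl}_{\mrm{Ke}}(B)$. Under this assumption the simplicial structure maps of $\opn{Cyl}(B)$ are the maps induced by simplicial operators on cochains. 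If the paper's definition is presented differently, the first step is to prove this identification, or at least the two properties of it used below.

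\emph{Step 1: horns as DG ring homomorphisms.} For a finite simplicial set $X$, put $\opn{Cyl}_X(B) := B \otimes N^*(X)$. Since $N^*(X)$ is degreewise a finite free $\Z$-module, this is a DG ring. The functor $X \mapsto N^*(X)$ converts the finite colimit presenting the horn $\Lambda^q_k$ (glued from faces of $\Simp^q$) into a finite limit of DG rings, and $B \otimes -$ preserves this limit. Hence
\[ \opn{Hom}_{\mrm{sSet}}(\Lambda^q_k, \opn{SHom}(\til{A}, B)) \cong \opn{Hom}_{\cat{DGRng}}(\til{A}, \opn{Cyl}_{\Lambda^q_k}(B)). \]
Under this bijection, a filler $\Simp^q \to \opn{SHom}(\til{A}, B)$ of a given horn is exactly a lift of the corresponding homomorphism $\til{A} \to \opn{Cyl}_{\Lambda^q_k}(B)$ through the restriction map $\rho : \opn{Cyl}_q(B) \to \opn{Cyl}_{\Lambda^q_k}(B)$.

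\emph{Step 2: $\rho$ is a surjective quasi-isomorphism.} Surjectivity holds because restriction of normalized cochains along a simplicial subset is split surjective degreewise. For the quasi-isomorphism, the kernel of $N^*(\Simp^q) \to N^*(\Lambda^q_k)$ is the relative cochain complex of the pair. It is a bounded complex of finitely generated free abelian groups, and it is acyclic because $\Lambda^q_k \hookrightarrow \Simp^q$ is a weak equivalence. Alternatively one can write an explicit contracting homotopy coming from the cone structure on vertex $k$. A bounded acyclic complex of free modules is contractible, so its tensor product with $B$ is still acyclic. Therefore $\ker(\rho)$ is acyclic and $\rho$ is a quasi-isomorphism.

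\emph{Step 3: lifting lemma.} It remains to show that a semi-free DG ring $\til{A}$ has the left lifting property with respect to any surjective quasi-isomorphism $\rho : C \to D$ of DG rings. Concretely, given $g : \til{A} \to D$, I would construct $\til{g} : \til{A} \to C$ with $\rho \circ \til{g} = g$. I would proceed by induction along the semi-free filtration of Definition \ref{dfn:180}, using that $\til{A}$ is free as a graded ring on generators $x$ whose differentials $\d x$ lie in the previous filtration stage. Suppose $\til{g}$ is already defined on that stage, and let $x$ be a new generator of degree $n$. First choose any $c \in C^n$ with $\rho(c) = g(x)$. Then the element $z := \til{g}(\d x) - \d c$ lies in $\ker(\rho)$ and is a cocycle. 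Since $\ker(\rho)$ is acyclic, $z = \d k$ for some $k \in \ker(\rho)$. Setting $\til{g}(x) := c + k$ gives $\d \til{g}(x) = \til{g}(\d x)$ and $\rho(\til{g}(x)) = g(x)$. Freeness of the graded ring then extends $\til{g}$ multiplicatively, and we pass to the union over the filtration.

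I expect Step 2 to be the main obstacle. Steps 1 and 3 are formal. Step 2, by contrast, depends on the precise form of Definition \ref{dfn:215}, and it needs a clean argument that the horn restriction map has acyclic kernel after tensoring with an arbitrary, possibly non-flat, $B$. My first attempt would be an explicit contraction of the relative normalized cochains $N^*(\Simp^q, \Lambda^q_k)$, defined by pairing faces with their cone on vertex $k$. This contraction is $\Z$-linear, so it survives tensoring with $B$.
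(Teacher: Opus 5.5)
Your proposal is correct and is essentially the paper's proof. Your guess about Definition \ref{dfn:215} is right ($\opn{Cyl}_q(B) = \opn{N}(\Simp^q, \Z) \ot_{\Z} B$, with the Alexander--Whitney product), and your Steps 1--3 are Theorem \ref{thm:152}, Lemma \ref{lem:157} and Theorem \ref{thm:157}, with Proposition \ref{prop:445} turning the lift into a filler. The one claim you leave unjustified is that $B \ot_{\Z} -$ preserves the finite limit in Step 1, and this is not automatic for non-flat $B$. It holds because, for finite $X$, the group $\opn{N}^p(X, \Z) \ot_{\Z} B^k$ is naturally the group of $B^k$-valued functions on $X_p$ that vanish on degenerate simplices, and this functor turns colimits of simplicial sets into limits; this is what Lemma \ref{lem:152} establishes via nondegenerate simplices and Corollary \ref{cor:190}.
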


The theorem is repeated as Theorem \ref{thm:215} in Section
\ref{sec:dg-ring-horn}, and proved there.
Theorem \ref{thm:150} shows that the equivalence relation on the set
$\opn{Hom}_{\cat{DGRng}}(\til{A}, B) = \opn{SHom}_{0}(\til{A}, B)$
formed by the Keller derivations, which was mentioned above, consists of
isomorphisms in the {\em fundamental groupoid}
\begin{equation} \label{eqn:241}
\opn{SHom}_{\leq 1}(\til{A}, B) :=
\bpi_{\leq 1} \bigl( \opn{SHom}(\til{A}, B) \bigr) .
\end{equation}
We refer to $\opn{SHom}_{\leq 1}(\til{A}, B)$ as the {\em Hom groupoid
from $\til{A}$ to $B$}.
Its objects are the DG ring homomorphisms $f : \til{A} \to B$,
and its isomorphisms are the $2$-homotopy classes $[\ga]$ of Keller
homotopies $\ga : f_0 \twoto f_1$.

The proof of Theorem \ref{thm:150}, namely the verification of the Kan condition
in the simplicial set $\opn{SHom}(\til{A}, B)$
when $\til{A}$ is semi-free, relies on Theorem \ref{thm:245} below,
which may be of independent interest.
For every simplicial set $X$ we construct a DG ring
$\opn{N}(X, B)$. When $X = \Simp^q$, the $q$-dimensional combinatorial simplex,
this is precisely the cylinder DG ring
$\opn{Cyl}_{q}(B)$.
Given a horn $\bsym{\La}^{q}_{i}$ in $\Simp^q$, the DG ring
$\opn{N}(\bsym{\La}^{q}_{i}, B)$
turns out to represent horns, as the next theorem shows.
The category of simplicial sets is denoted by $\cat{SSet}$.

\begin{thm} \label{thm:245}
Let $A$ and $B$ be DG rings, and let $\bsym{\La}^{q}_{i}$ be a horn in
$\Simp^q$. Then there is a canonical bijection
\[ \opn{Hom}_{\cat{SSet}} \bigl( \bsym{\La}^{q}_{i},
\opn{SHom}(A, B) \bigr) \cong
\opn{Hom}_{\cat{DGRng}} \bigl( A, \opn{N}(\bsym{\La}^{q}_{i}, B)
\bigr) . \]
\end{thm}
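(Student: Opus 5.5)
The plan is to see $\opn{N}(X, B)$ as a limit over the simplices of $X$, and then use that $\opn{Hom}_{\cat{DGRng}}(A, -)$ commutes with limits. The construction $X \mapsto \opn{N}(X, B)$ will be defined so that $\opn{N}(\Simp^q, B) = \opn{Cyl}_q(B)$, and so that it is contravariantly functorial in $X$. I expect it to be built from normalized cochains of $X$ with values in $B$, with the cup product twisted by the multiplication of $B$.

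The key structural claim is that $X \mapsto \opn{N}(X, B)$ sends colimits of simplicial sets to limits of DG rings. In particular I will need, for $X = \bsym{\La}^{q}_{i}$, the identification
\[ \opn{N}(\bsym{\La}^{q}_{i}, B) \cong \lim_{(\Simp^p \to \bsym{\La}^{q}_{i})} \opn{Cyl}_p(B) , \]
where the limit runs over the category of simplices of the horn. Equivalently, it is the equalizer
\[ \prod_{j \neq i} \opn{Cyl}_{q-1}(B) \rightrightarrows \prod_{j<k,\ j,k \neq i} \opn{Cyl}_{q-2}(B) , \]
given by the standard presentation of $\bsym{\La}^{q}_{i}$ as the coequalizer of its faces glued along codimension $2$ faces. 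I would prove this by working degreewise on underlying graded abelian groups. There $\opn{N}(X, B)^n$ is a product over nondegenerate simplices of $X$ of pieces of $B$. Since nondegenerate simplices of a union of faces are exactly the union of the nondegenerate simplices of the faces, the comparison map is bijective. The ring structure and differential are compatible because the face maps $\opn{Cyl}_p(B) \to \opn{Cyl}_{p-1}(B)$ are DG ring homomorphisms. A limit of DG rings is computed on underlying sets, so the bijectivity of the comparison map suffices.

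Given this, the bijection follows formally. By Yoneda and the definition (\ref{eqn:285}), a $p$-simplex of $\opn{SHom}(A,B)$ is a DG ring homomorphism $A \to \opn{Cyl}_p(B)$. Hence a map $\bsym{\La}^{q}_{i} \to \opn{SHom}(A,B)$ is a compatible family of such homomorphisms $A \to \opn{Cyl}_{q-1}(B)$, one for each face $j \neq i$, agreeing on common $(q-2)$-faces. This is precisely $\lim \opn{Hom}_{\cat{DGRng}}(A, \opn{Cyl}_p(B))$, which equals $\opn{Hom}_{\cat{DGRng}}(A, \lim \opn{Cyl}_p(B)) = \opn{Hom}_{\cat{DGRng}}(A, \opn{N}(\bsym{\La}^{q}_{i}, B))$. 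Canonicity and naturality in $A$, $B$ are then automatic.

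The main obstacle is the limit-preservation step. One has to check that the normalized-cochain description of $\opn{Cyl}_p(B)$ really glues: degenerate simplices must be killed compatibly, and the multiplication on $\opn{N}(X,B)$, which involves front and back faces of simplices, must restrict correctly along the face inclusions. I would first try proving the general statement $\opn{N}(\operatorname{colim} X_\alpha, B) \cong \lim \opn{N}(X_\alpha, B)$ for $X_\alpha$ representable, using the density formula $X = \operatorname{colim}_{\Simp^p \to X} \Simp^p$. If that is too abstract, I would fall back to the explicit equalizer for the horn, verified directly on nondegenerate simplices.
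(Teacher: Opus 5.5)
Your proposal is correct and is essentially the paper's proof: the paper writes $\bsym{\La}^{q}_{i}$ as the colimit of its $(q-1)$-faces glued along the $(q-2)$-faces (Corollary \ref{cor:150}), and proves $\opn{N}(\bsym{\La}^{q}_{i}, B) \cong \lim_{j} \opn{Cyl}_{q_j}(B)$ degreewise on underlying graded abelian groups (Lemma \ref{lem:152}), using the nondegenerate-simplex description $\opn{N}(X, B) \cong \opn{R}(X^{\mrm{nd}}, B)$ and the corresponding colimit of semi-simplicial sets. It then chains the Yoneda identification (Lemma \ref{lem:154}) with Hom turning colimits into limits, exactly as you do; the one point you gloss over, gluing compatibly on overlaps of faces, is the paper's Proposition \ref{prop:171}, and your fallback to the explicit equalizer is the route the paper actually takes.
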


This is repeated as Theorem \ref{thm:152} and proved in Section
\ref{sec:dg-ring-horn}.

Here is a sketch of how Theorem \ref{thm:245} is used to prove Theorem
\ref{thm:150}. A horn
$\si : \bsym{\La}^{q}_{i} \to \opn{SHom}_{}(\til{A}, B)$
corresponds, by Theorem \ref{thm:245}, to a DG ring homomorphism
$f : \til{A} \to \opn{N}(\bsym{\La}^{q}_{i}, B)$.
The inclusion
$\bsym{\La}^{q}_{i} \to \Simp^q$
gives rise to a surjective quasi-isomorphism of DG rings
$w : \opn{N}(\Simp^q, B)  \to \opn{N}(\bsym{\La}^{q}_{i}, B)$.
The lifting property of semi-free DG rings, see Theorem \ref{thm:157},
implies that $f$ lifts to a DG ring homomorphism
$f' : \til{A} \to \opn{N}(\Simp^q, B) = \opn{Cyl}_q(B)$.
A standard fact (see Proposition \ref{prop:445})
says that there is a corresponding map of simplicial sets
$\si' : \Simp^q \to \opn{SHom}_{}(\til{A}, B)$.
It is not hard to see that $\si'$ is a filler of $\si$.

Now for a few theorems about the Hom groupoid
$\opn{SHom}_{\leq 1}(\til{A}, B)$.

\begin{thm} \label{thm:270}
Let $\til{A}$ be a semi-free DG ring, and let $v : \til{B} \to B$ be a
surjective quasi-isomorphism of DG rings. Then the map of groupoids
\[ \opn{SHom}_{\leq 1}(\til{A}, \til{B}) \to
\opn{SHom}_{\leq 1}(\til{A}, B) \]
induced by $v$ is a surjective equivalence.
\end{thm}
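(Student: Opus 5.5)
Our plan is to show that the induced map of simplicial sets
\[ v_* : \opn{SHom}(\til{A}, \til{B}) \to \opn{SHom}(\til{A}, B) \]
is a trivial Kan fibration, i.e.\ has the right lifting property with respect to all boundary inclusions $\partial \Simp^q \to \Simp^q$. A trivial fibration between Kan complexes (both are Kan by Theorem \ref{thm:150}) induces a functor on fundamental groupoids that is surjective on objects and fully faithful, hence a surjective equivalence. Rather than quoting this general fact, we will verify the properties directly in low dimensions.

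The lifting problem for $\partial \Simp^q \to \Simp^q$ will be translated into DG ring terms, exactly as in the sketch of the proof of Theorem \ref{thm:150}. We need an analogue of Theorem \ref{thm:245} for the boundary $\partial \Simp^q$; we expect its proof to be the same, since $\partial \Simp^q$ is also a finite union of faces glued along subfaces. This analogue says that maps $\partial \Simp^q \to \opn{SHom}(A,B)$ correspond to DG ring homomorphisms $A \to \opn{N}(\partial\Simp^q, B)$. A lifting square then becomes a pair of DG ring homomorphisms
\[ \til{A} \to \opn{N}(\partial\Simp^q,\til{B}) , \qquad \til{A} \to \opn{N}(\Simp^q, B) , \]
agreeing in $\opn{N}(\partial\Simp^q, B)$. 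Equivalently, it is a homomorphism from $\til{A}$ to the fiber product
\[ P := \opn{N}(\partial\Simp^q,\til{B}) \times_{\opn{N}(\partial\Simp^q,B)} \opn{N}(\Simp^q,B) . \]
A filler is a lift along the canonical map $u : \opn{N}(\Simp^q,\til{B}) \to P$. By the lifting property of semi-free DG rings (Theorem \ref{thm:157}), it suffices to show that $u$ is a surjective quasi-isomorphism.

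The main obstacle is this verification. We will use the explicit description of $\opn{N}(X,B)$ from Definition \ref{dfn:215}: as a graded abelian group (ignoring multiplication, which is irrelevant for surjectivity and quasi-isomorphism) it should be something like $B \otimes C(X)$ or $\bigoplus_\sigma B[\cdot]$ over the nondegenerate simplices of $X$, functorial and exact in $B$, with twisted differential. We then argue in three steps.
\begin{itemize}
\item[(a)] Restriction $\opn{N}(\Simp^q,-) \to \opn{N}(\partial\Simp^q,-)$ is surjective; this is also used in Theorem \ref{thm:150}.
\item[(b)] The kernels $K(\til{B})$ and $K(B)$ of these restrictions are the summands indexed by the top simplex, so they are shifts of $\til{B}$ and $B$ with the differential of $\til{B}$ and $B$ up to sign. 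Hence $K(\til{B}) \to K(B)$ is a surjective quasi-isomorphism.
\item[(c)] The kernel of $u$ equals the kernel of $K(\til{B}) \to K(B)$. Surjectivity of $u$ follows from (a), (b) and surjectivity of $\opn{N}(\partial\Simp^q,\til{B}) \to \opn{N}(\partial\Simp^q,B)$.
\end{itemize}
This last surjectivity holds because $v$ is surjective and $\opn{N}(X,-)$ is built from finite sums of shifts of $B$. Acyclicity of $\ker(u)$ then follows from (b), so $u$ is a quasi-isomorphism. If the top-simplex summand carries extra differential terms into lower faces, we will filter by skeleta and use the fact that the associated graded pieces are shifts of $\ker(v)$, which is acyclic.

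Finally we spell out the groupoid statement. Lifting for $q=0$ gives surjectivity on objects: every $f : \til{A} \to B$ lifts to $\til{B}$ along $v$, again by Theorem \ref{thm:157}. Lifting for $q=1$ gives fullness: a Keller homotopy between $v \circ \til{f}_0$ and $v \circ \til{f}_1$ lifts to one between $\til{f}_0$ and $\til{f}_1$. Lifting for $q=2$ gives faithfulness: if two lifted homotopies become $2$-homotopic downstairs, then the $2$-simplex witnessing this, together with the given boundary, lifts to a $2$-simplex upstairs, so they are $2$-homotopic upstairs. Hence the functor $\opn{SHom}_{\leq 1}(\til{A}, \til{B}) \to \opn{SHom}_{\leq 1}(\til{A}, B)$ is a surjective equivalence.
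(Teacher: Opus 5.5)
Your argument is correct and is essentially the paper's proof. The intermediate DG rings $D$ in Steps~2 and~3 of the proof of Theorem~\ref{thm:302} are exactly your fiber products $P$ for $q=1,2$: the nondegenerate simplices of dimension $<q$ carry coefficients in $\til{B}$, and the top simplex carries coefficients in $B$. The paper likewise obtains the fillers by lifting along the surjective quasi-isomorphism $\opn{Cyl}_q(\til{B}) \to D$ via Theorem~\ref{thm:157}. The difference is packaging. You prove a trivial Kan fibration in all dimensions, which needs the routine $\partial\Simp^q$-analogue of Theorem~\ref{thm:152} to produce the map into $P$. The paper stays in dimensions $0,1,2$ and instead checks by hand, via Theorem~\ref{thm:390}, that its explicit $h^{D}$ is a DG ring homomorphism.
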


This theorem is repeated as Theorem \ref{thm:302} in Section
\ref{sec:props-hom-grpd}.

A {\em semi-free resolution} of a DG ring $A$ is a surjective quasi-isomorphism
$u : \til{A} \to A$ from a semi-free DG ring $\til{A}$.
Given a DG ring homomorphism $f : A \to B$, and semi-free resolutions
$u : \til{A} \to A$ and $v : \til{B} \to B$, a {\em lifting} of $f$ (with
respect to $u$ and $v$) is a DG ring homomorphism
$\til{f} : \til{A} \to \til{B}$ such that the diagram
\begin{equation} \label{eqn:270}
\begin{tikzcd} [column sep = 8ex, row sep = 5ex]
\til{A}
\ar[r, "{\til{f}}"]
\ar[d, "{u}" swap]
&
\til{B}
\ar[d, "{v}"]
\\
A
\ar[r, "{f}"]
&
B
\end{tikzcd}
\end{equation}
in $\cat{DGRng}$ is commutative.
It is known (see Theorems \ref{thm:180} and \ref{thm:157}) that
semi-free resolutions and liftings always exist.

\begin{thm} \label{thm:271}
Let $f : A \to B$ be a DG ring homomorphism, let
$u : \til{A} \to A$ and $v : \til{B} \to B$
be semi-free resolutions, and let
$\{ \til{f}_i \}_{i \in I}$ be the collection of all liftings
$\til{f}_i : \til{A} \to \til{B}$ of $f$ with respect to $u$ and $v$.
Then\tup{:}
\begin{enumerate}
\item For every $i, j \in I$ there is a distinguished isomorphism
$[\til{g}_{i, j}] : \til{f}_i \to \til{f}_j$ in the groupoid
$\opn{SHom}_{\leq 1}(\til{A}, \til{B})$.

\item The collection of distinguished isomorphisms
$\{ [\til{g}_{i, j}] \}_{i, j \in I}$
satisfies
$[\til{g}_{j, k}] \cd [\til{g}_{i, j}] = [\til{g}_{i, k}]$
for all $i, j, k \in I$, and  $[\til{g}_{i, i}] = \opn{id}_{f_i}$.
\end{enumerate}
\end{thm}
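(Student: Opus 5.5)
The plan is to deduce the statement from Theorem \ref{thm:270}, applied to the surjective quasi-isomorphism $v : \til{B} \to B$. Write $v_* : \opn{SHom}_{\leq 1}(\til{A}, \til{B}) \to \opn{SHom}_{\leq 1}(\til{A}, B)$ for the induced map of groupoids. By Theorem \ref{thm:270}, $v_*$ is an equivalence, and in particular it is fully faithful. Every lifting $\til{f}_i$ satisfies $v \circ \til{f}_i = f \circ u$. Hence all the objects $\til{f}_i$ are sent by $v_*$ to the single object $h := f \circ u$ of $\opn{SHom}_{\leq 1}(\til{A}, B)$.

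For each pair $i, j \in I$, consider the identity morphism $\opn{id}_h : h \to h$, which is the class of the degenerate $1$-simplex, i.e.\ the zero Keller homotopy. Full faithfulness gives a bijection
\[ \opn{Hom}(\til{f}_i, \til{f}_j) \to \opn{Hom}(v_*(\til{f}_i), v_*(\til{f}_j)) = \opn{Hom}(h, h) . \]
We define $[\til{g}_{i, j}]$ to be the unique morphism $\til{f}_i \to \til{f}_j$ that maps to $\opn{id}_h$. This isomorphism depends only on the data $f, u, v$, so it is canonical, which proves (1).

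Part (2) then follows from functoriality of $v_*$. The composition $[\til{g}_{j, k}] \cd [\til{g}_{i, j}]$ maps to $\opn{id}_h \cd \opn{id}_h = \opn{id}_h$, and so does $[\til{g}_{i, k}]$. Injectivity on Hom sets gives equality. Likewise $\opn{id}_{\til{f}_i}$ maps to $\opn{id}_h$, so $[\til{g}_{i, i}] = \opn{id}_{\til{f}_i}$.

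The main point to check is that $v_*$ is indeed a functor of fundamental groupoids. It is induced by the map of simplicial sets $\opn{SHom}(\til{A}, \til{B}) \to \opn{SHom}(\til{A}, B)$, which comes from the functoriality of $\opn{Cyl}(-)$ noted after (\ref{eqn:286}). It sends degenerate simplices to degenerate simplices, hence identities to identities. If the paper's version of Theorem \ref{thm:270} only gives ``surjective equivalence'' without stating full faithfulness explicitly, I would use the standard fact that every equivalence of groupoids is fully faithful.

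It is also worth recording a concrete description of $\til{g}_{i, j}$, although the argument does not need it. It is a Keller homotopy $\til{f}_i \twoto \til{f}_j$ whose image under $v$ is $2$-homotopic to the zero homotopy on $f \circ u$. One can obtain it directly: lift the degenerate map $\til{A} \to \opn{Cyl}_1(B)$ through the surjective quasi-isomorphism $\opn{Cyl}_1(\til{B}) \to \til{B} \times_B \opn{Cyl}_1(B) \times_B \til{B}$ using Theorem \ref{thm:157}. This construction is only needed if one wants to avoid Theorem \ref{thm:270}.
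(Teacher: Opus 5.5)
Your proposal is correct and follows the paper's own proof. The paper likewise uses the Hom-set bijection from Theorem~\ref{thm:302} (i.e.\ Theorem~\ref{thm:270}) to define $[\til{g}_{i,j}]$ as the unique preimage of $\opn{id}_{f \circ u}$, and obtains (2) from uniqueness; your aside about constructing $\til{g}_{i,j}$ directly by lifting to $\opn{Cyl}_1(\til{B})$ is essentially Theorem~\ref{thm:312} and is not needed.
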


This theorem is copied as Theorem \ref{thm:303} and proved in Section
\ref{sec:props-hom-grpd}.
Item (2) of the theorem says that the collection of distinguished isomorphisms
$\{ [\til{g}_{i, j}] \}_{i, j \in I}$ is {\em contractible}.

\begin{thm} \label{thm:272}
Let $\til{A}$ be a semi-free DG ring, let $B$ be some DG ring, and let
$f : \til{A} \to B$ be a DG ring homomorphism.
Then $\opn{Aut}(f)$, the automorphism group of $f$ as an object of the Hom
groupoid $\opn{SHom}_{\leq 1}(\til{A}, B)$, is abelian.
\end{thm}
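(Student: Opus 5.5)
The plan is to compute $\opn{Aut}(f)$ explicitly as an additive group of cohomology classes. By Theorem \ref{thm:150} the simplicial set $X := \opn{SHom}(\til{A}, B)$ is a Kan complex, so $\opn{Aut}(f) = \bpi_1(X, f)$. Its elements are the $2$-homotopy classes $[\ga]$ of Keller homotopies $\ga : f \twoto f$. Composition $[\ga_2] \cd [\ga_1] = [\ga_3]$ holds exactly when there is a $2$-simplex of $X$ with boundary edges $\ga_1, \ga_2, \ga_3$ and all three vertices equal to $f$.

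\textbf{Step 1 (loops as cocycles).} By Definition \ref{dfn:330}, a Keller homotopy $\ga : f \twoto f$ is a degree $-1$ map with $\d \circ \ga + \ga \circ \d = 0$ that is an $f$-$f$-derivation. I will therefore introduce the complex $\opn{Der}_f(\til{A}, B)$ of graded $f$-$f$-derivations, with the commutator differential. The loops at $f$ are then precisely the degree $-1$ cocycles of this complex, and they form an abelian group under addition.

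\textbf{Step 2 (unwinding $2$-simplices).} Using Definition \ref{dfn:215}, I will write down what a DG ring homomorphism $\til{A} \to \opn{Cyl}_2(B)$ amounts to. I expect it to consist of three vertex homomorphisms, three edge homotopies, and a degree $-2$ face component $\rho : \til{A} \to B$. The face component must satisfy a twisted derivation rule and a boundary equation of roughly the form
\[ \d \circ \rho - \rho \circ \d = \ga_{0 1} - \ga_{0 2} + \ga_{1 2} + (\text{quadratic terms in } \ga_{01}, \ga_{12}) . \]
Two special cases will give the identification of Step 1 with the group structure:
\begin{itemize}
\item Taking one edge degenerate at $f$ shows that $\ga \sim \ga'$ (a $2$-homotopy) iff $\ga - \ga'$ is of the form $\d \circ \rho - \rho \circ \d$ for a suitable $\rho$, modulo the derivation constraints. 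This identifies $\opn{Aut}(f)$, as a set, with $\opn{H}^{-1}$ of $\opn{Der}_f(\til{A}, B)$, or of a closely related complex.
\item Taking all vertices equal to $f$ with $\ga_{01} = \ga_1$, $\ga_{12} = \ga_2$ and $\ga_{02} = \ga_1 + \ga_2$ will show that composition is addition of classes.
\end{itemize}
Since addition is commutative, $\opn{Aut}(f)$ is abelian.

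\textbf{Main obstacle.} The hard step is the composition claim: producing the face component $\rho$ for the second special case. The quadratic (cup-product-like) terms may prevent taking $\rho = 0$. I will try the following, in order:
\begin{enumerate}
\item Since $\til{A}$ is semi-free, define $\rho$ on the free generators by induction on the semi-free filtration and extend it by the twisted Leibniz rule. The inductive step should be solvable because the quadratic obstruction is itself a coboundary, or at least is symmetric in $\ga_1, \ga_2$.
\item If that fails, use the Kan property to fill the horn $(\ga_1, \ga_2)$, then show the resulting third edge differs from $\ga_1 + \ga_2$ by a term symmetric in $\ga_1, \ga_2$. That already gives $[\ga_2] \cd [\ga_1] = [\ga_1] \cd [\ga_2]$, which is all the theorem needs.
\item As a last resort, use the levelwise ring structure of the simplicial DG ring $\opn{Cyl}(B)$ to build a second, pointwise composition law on loops. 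Show it is compatible with the groupoid composition, and conclude by an Eckmann--Hilton argument.
\end{enumerate}
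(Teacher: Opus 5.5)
\textbf{There is a genuine gap in the composition step, and it cannot be closed, because the statement is false.}

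Your set-level identification is correct and is exactly the paper's Steps 2--4: loops at $f$ are the cocycles $\opn{Z}^{-1}(\opn{Der}_{\til{A}, f}(B))$, and a $2$-simplex with $\pa_0 = \opn{id}_f$ shows that $[\ga] = [\ga']$ iff $\ga - \ga'$ is the coboundary of a degree $-2$ derivation.

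The failure is in your second special case. By Theorem \ref{thm:390}, the quadratic term lives in the multiplicativity rule ($*$), not in the boundary equation. The face component satisfies $\si(a \cd a') = \si(a) \cd f(a') + f(a) \cd \si(a') + (-1)^{k-1} \cd \ga_{(0,1)}(a) \cd \ga_{(1,2)}(a')$, while ($**$) is linear. Hence every filler has $\ga_{(0,2)} = \ga_{(0,1)} + \ga_{(1,2)} + (\d_B \circ \si - \si \circ \d_{\til{A}})$. Here $\si$ is a derivation only when the product term vanishes, for instance when one edge is degenerate. In general the correction is a cup-product-type class, which need be neither zero nor symmetric.

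So your fallbacks fail. Fallbacks (1) and (2) fail because this class is in general neither a coboundary nor symmetric, as the example below shows. Fallback (3) has no starting point: a product of ring homomorphisms $\til{A} \to \opn{Cyl}_q(B)$ is not a ring homomorphism. The paper's Step 5 makes the same slip. It treats this $\si$ as an element of $\opn{Der}_{\til{A}, f}(B)^{-2}$ and concludes $(\ga + \ga') - \ga'' \in H$. That is justified only in Steps 3--4, where $\ga_{(1,2)} = 0$.

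\emph{Counterexample.} Let $\til{A} = \Z \bra{y_1, y_2, x}$ with $y_1, y_2$ of degree $0$ and $x$ of degree $-1$, $\d(y_i) = 0$, and $\d(x) = y_1 \cd y_2$. This is semi-free with $F_0(X) = \{ y_1, y_2 \}$. Let $B = \Z \bra{\be}$ be the free graded ring on one generator of degree $-1$ with $\d_B = 0$, so $\be^2 \neq 0$. Let $f$ kill $y_1, y_2, x$.

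An $f$-$f$-derivation is an arbitrary assignment on generators and kills $y_1 \cd y_2$. Since $\d_B = 0$, it follows that $\d_{\opn{Hom}}$ vanishes on $\opn{Der}_{\til{A}, f}(B)$. By Step 4, distinct loops at $f$ are therefore non-homotopic.

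Let $\ga_1$ send $y_1 \mapsto \be$ and let $\ga_2$ send $y_2 \mapsto \be$, with all other generators sent to $0$. Take any $2$-simplex with $\ga_{(0,1)} = \ga_1$ and $\ga_{(1,2)} = \ga_2$; one exists by Theorem \ref{thm:215}. Then ($*$) forces $\si(y_1 \cd y_2) = -\be^2$, and ($**$) at $x$ then forces $\ga_{(0,2)}(x) = \be^2$. With the order reversed, ($*$) gives $\si(y_1 \cd y_2) = -\ga_2(y_1) \cd \ga_1(y_2) = 0$, hence $\ga_{(0,2)}(x) = 0$. So $[\ga_2] \cd [\ga_1] \neq [\ga_1] \cd [\ga_2]$.

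More precisely, encode a loop as $(a_1, a_2, c) \in \Z^3$ via $y_1 \mapsto a_1 \be$, $y_2 \mapsto a_2 \be$, $x \mapsto c \be^2$. The composite of $(a_1, a_2, c)$ followed by $(a'_1, a'_2, c')$ is $(a_1 + a'_1, a_2 + a'_2, c + c' + a_1 a'_2)$. Thus $\opn{Aut}(f)$ is the integral Heisenberg group. What survives of the theorem is the bijection of sets $\opn{Aut}(f) \cong \opn{H}^{-1}(\opn{Der}_{\til{A}, f}(B))$. The group law is addition corrected by the class of $\d_B \circ \si - \si \circ \d_{\til{A}}$.
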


Theorem \ref{thm:272} is part of Theorem \ref{thm:304},
in which we also calculate the group $\opn{Aut}(f)$.
The proof is in Section \ref{sec:props-hom-grpd}.

Here are a few remarks on applications and extensions of our work.

\begin{rem} \label{rem:248}
This is an outline of the extension of our constructions from DG rings to DG
categories. Consider the integral cylinder DG ring
$\opn{Cyl}_{q}(\Z)$.
Given a DG category $\cat{B}$, define the DG category
\begin{equation} \label{eqn:251}
\opn{Cyl}_{q}(\cat{B}) := \cat{B} \ot_{\Z} \opn{Cyl}_{q}(\Z) .
\end{equation}
As $q$ changes, we obtain the simplicial DG category
(in the sense that it is a simplicial object in $\cat{DGCat}$)
\begin{equation} \label{eqn:250}
\opn{Cyl}_{}(\cat{B}) = \{ \opn{Cyl}_{q}(\cat{B}) \}_{q \in \N} .
\end{equation}

Now let $\cat{A}$ be some other DG category. Looking at DG functors
$\cat{A} \to \opn{Cyl}_{q}(\cat{B})$, we get a simplicial set
\begin{equation} \label{eqn:252}
\opn{Hom}_{\cat{DGCat}}(\cat{A}, \opn{Cyl}(\cat{B})) =
\bigl\{ \opn{Hom}_{\cat{DGCat}}(\cat{A}, \opn{Cyl}_{q}(\cat{B}))
\bigr\}_{q \in \N} .
\end{equation}
This is the generalization of (\ref{eqn:285}) and (\ref{eqn:286}). It is
reasonable to ask whether the generalization of Theorem \ref{thm:150} holds,
namely that when $\cat{A}$ is a semi-free DG category, the simplicial set
(\ref{eqn:252}) is a Kan complex. Presumably the answer is positive; and a proof
would be a suitable modification of our proof of Theorem \ref{thm:150}, using
the DG categories
$\opn{N}(\bsym{\La}^{q}_{i}, \cat{B}) := \cat{B} \ot_{\Z}
\opn{N}(\bsym{\La}^{q}_{i}, \Z)$,
see Definition \ref{dfn:195}.
\end{rem}

\begin{rem} \label{rem:270}
In this paper -- with the exception of Section \ref{sec:prelims-dg} -- we
consider DG rings in the absolute setting.
It is not hard to extend the whole discussion to a relative setting, namely to
fix a base commutative DG ring $\K$, and to replace $\cat{DGRng}$ with
the category $\cat{DGRng} \centover \K$ of central DG $\K$-rings.
Likewise, in Remark \ref{rem:248} we may consider $\K$-linear DG categories.
Again, for the sake of streamlining the discussion, we left
this generalization out.
\end{rem}

\begin{rem} \label{rem:266}
A {\em simplicial enrichment} of $\cat{DGRng}$, or more generally of
$\cat{DGCat}$, is one incarnation of its $\infty$-derived category.
The known constructions of such simplicial enrichments are indirect, and rely
on difficult arguments involving Quillen model structures; see
\cite{Ta} and its references.
We were hoping to make use of Theorem \ref{thm:150}
to obtain an explicit simplicial enrichment of $\cat{DGRng}$; but
we were not able to describe the horizontal composition of higher morphisms. So
we leave this as a question: Let $\til{A}$, $\til{B}$ and $\til{C}$ be
semi-free DG rings. Is it
possible to define a natural map of simplicial sets
\begin{equation} \label{eqn:247}
\opn{SHom}(\til{A}, \til{B}) \times
\opn{SHom}(\til{B}, \til{C}) \to
\opn{SHom}(\til{A}, \til{C})
\end{equation}
that would be the horizontal composition of a simplicial enrichment of
$\cat{DGRng}$~?
\end{rem}

\begin{rem} \label{rem:268}
The best we can do at present is a much weaker than a simplicial enrichment of
$\cat{DGRng}$. Given semi-free DG rings $\til{A}$, $\til{B}$ and $\til{C}$,
we expect to have a formula for a horizontal composition of the Hom groupoids
\begin{equation} \label{eqn:415}
\opn{SHom}_{\leq 1}(\til{A}, \til{B}) \times
\opn{SHom}_{\leq 1}(\til{B}, \til{C}) \to
\opn{SHom}_{\leq 1}(\til{A}, \til{C}) .
\end{equation}
This composition should be associative and unital. The reason the construction
ought to work is that the groupoids in question are abelian.
Let us denote this $2$-category by
$\cat{D}_{(2, 1)}(\cat{DGRng})$.
We expect that $\cat{D}_{(2, 1)}(\cat{DGRng})$
is the {\em $(2, 1)$-derived category} of $\cat{DGRng}$.
This is still work in progress, intended to be in the paper \cite{Ye3}.
\end{rem}

Here are remarks on related work by other authors. In these remarks we are
going to use the notation of the present paper, to make the comparison legible.

\begin{rem} \label{rem:245}
In the paper \cite{Ta}, Tabuada constructs a simplicial enrichment of the
category $\cat{DGCat}$ of (small) DG categories. The starting point is the
formation of the {\em path object}
$\opn{P}(\cat{B})$ associated to a DG category $\cat{B}$. It appears that
$\opn{P}(\cat{B})$ is the same as the DG category
$\opn{Cyl}_{1}(\cat{B})$ from equation (\ref{eqn:251}) above.
Tabuada then proceeds to prove that $\opn{P}(\cat{B})$ plays the role of a path
object for the Quillen model structure on $\cat{DGCat}$ that was constructed in
their earlier paper.

At this point our work diverges from the work of Tabuada.
They do not have higher cylinders (or path objects) for $q > 1$. Instead, they
prove that there is a zig-zag of Quillen adjunctions
between $\cat{DGCat}$ and the category
$\cat{SSetCat}$ of simplicially enriched categories. The composed functor
$\cat{DGCat} \to \cat{SSetCat}$ is their simplicial enrichment of
$\cat{DGCat}$.
\end{rem}

\begin{rem} \label{rem:250}
Faonte, in the paper \cite{Fa}, produces a simplicial enrichment of
$\cat{DGCat}$ in the following way. To a pair of DG categories
$\cat{A}$ and $\cat{B}$ they consider the DG category
$\cat{Fun}_{\opn{A}_{\infty}}(\cat{A}, \cat{B})$
of $\opn{A}_{\infty}$-functors $F : \cat{A} \to \cat{B}$. They then take
the DG nerve of
$\cat{Fun}_{\opn{A}_{\infty}}(\cat{A}, \cat{B})$,
which is a simplicial set, and prove that it is weakly homotopy equivalent
to the simplicial enrichment of Tabuada, as in Remark \ref{rem:245}.
We do not know if there is any further relationship between our work and the
work of Faonte.
\end{rem}

\begin{rem} \label{rem:246}
Holstein, in the paper \cite{Ho}, studies simplicial resolutions of DG
categories. For a DG category $\cat{B}$ they produce a simplicial DG category
$\cat{B}_{\bullet} = \{ \cat{B}_q \}_{q \in \N}$. The constructions are rather
involved; but in Example 3.5 they show that their $\cat{B}_1$ agrees with
Tabuada's path object,
see Remark \ref{rem:245} above, and hence also with our
$\opn{Cyl}_{1}(\cat{B})$. This might indicate that Holstein's simplicial
DG category $\cat{B}_{\bullet}$ agrees with our simplicial DG category
$\opn{Cyl}_{}(\cat{B})$ from Remark \ref{rem:248}. However we
could not verify the details.
\end{rem}

We end the Introduction with an outline of the paper.

\medskip \noindent
{\bf Section 1.} We recall Keller derivations and the Keller cylinder DG ring.
Then we recall the definition of semi-free DG rings, and prove some results
about their lifting properties, extending results from \cite{Ye1} and
\cite{Ye2}.

\medskip \noindent
{\bf Section 2.}
This section is about simplicial sets. We describe the horn
$\bsym{\La}^{q}_{i}$ as a colimit, in the category
$\cat{SSet}$ of simplicial sets, of a finite direct system
$\{ \Simp^{q_j} \}_{j \in J}$ of standard simplices.
Then we do the same for the semi-simplicial set
$(\bsym{\La}^{q}_{i})^{\mrm{nd}}$ of nondegenerate simplices in
$\bsym{\La}^{q}_{i}$.

\medskip \noindent
{\bf Section 3.}
Here we introduce the DG ring  $\opn{N}(X, B)$ associated functorially to a
simplicial set $X$ and a DG ring $B$. For the simplicial set $\Simp^q$
we get the cylinder DG ring
$\opn{Cyl}_{q}(B) := \opn{N}(\Simp^q, B)$.
The cosimplicial simplicial set structure on
$\{ \Simp^q \}_{q \in \N}$
gives rise to the simplicial DG ring structure on
$\opn{Cyl}_{}(B) = \{ \opn{Cyl}_{q}(B) \}_{q \in \N}$.

\medskip \noindent
{\bf Section 4.}
In this section we prove the key technical result of the paper, Theorem
\ref{thm:152} (a repetition of Theorem \ref{thm:245}), which asserts that
the DG ring
$\opn{N}(\bsym{\La}^{q}_{i}, B)$
encodes horns in the simplicial set $\opn{SHom}_{}(A, B)$.

\medskip \noindent
{\bf Section 5.}
Here we prove the main result of the paper, namely Theorem \ref{thm:150},
asserting that  $\opn{SHom}_{}(\til{A}, B)$ is a Kan complex when $\til{A}$ is
semi-free. We then introduce the Hom groupoid
$\opn{SHom}_{\leq 1}(\til{A}, B)$,
and describe it from the simplicial side.

\medskip \noindent
{\bf Section 6.}
This section focuses on the algebraic structure of
$\opn{SHom}_{2}(A, B)$; or, in other words, what a DG ring homomorphism
$h : A \to \opn{Cyl}_{2}(B)$ looks like.
Theorem \ref{thm:390} says that $h$ consists of a few DG ring homomorphisms and
derivations $A \to B$, satisfying compatibility conditions.

\medskip \noindent
{\bf Section 7.}
Here we study the algebraic structure of the Hom groupoid
$\opn{SHom}_{\leq 1}(\til{A}, B)$
when $\til{A}$ is semi-free. The results proved in this section are Theorems
\ref{thm:270}, \ref{thm:271} and \ref{thm:272} that were mentioned above.

\section{Preliminaries About DG Rings} \label{sec:prelims-dg}

In this section we recall some material on DG rings, from the paper \cite{Ye1}
and the book \cite{Ye2}.
We also prove upgraded versions of a few theorems from these references.

The setting here is more general than is required for the
current paper, regarding the base. In this section we have a fixed base CDG
ring $A$, and we study central DG $A$-rings.
In the rest of the paper we work in the absolute setting, i.e.\ $A = \Z$.
The reason is that the theorems in this section will be needed elsewhere in
their full generality.

A {\em DG ring} is a graded ring $A = \bigoplus_{i \in \Z} A^i$,
with a differential $\d$ of degree $1$ satisfying
$\d \circ \d = 0$, and the graded Leibniz rule
$\d(a \cd b) = \d(a) \cd b + (-1)^i \cd a \cd \d(b)$ for
$a \in A^i$ and $b \in A^j$. Traditionally, $A$ is called a unital associative
cochain DG algebra. Let $\cat{DGRng}$ denote the category of DG rings and DG
ring homomorphisms.

Suppose $A$ is a fixed DG ring. A DG $A$-ring is a DG ring $B$ equipped
with a DG ring homomorphism $u_B : A \to B$, called the structural homomorphism.
A homomorphism of DG $A$-rings $f : B \to C$ is a DG ring homomorphism $f$ such
that $f \circ u_B = u_C$. The  category of DG rings  is denoted by
$\cat{DGRng} \over A$.

Let $A$ be some DG ring. A homogeneous element $a \in A^i$ is called central if
for every $b \in A^j$ there is equality
$b \cd a = (-1)^{i \cd j} \cd a \cd b$.
The {\em center} of $A$ consists of elements that are sums of homogeneous
central elements, and it is denoted by $\opn{Cent}(A)$. It is easy to check that
$\opn{Cent}(A)$ is a DG subring of $A$.

A DG ring $A$ is called {\em commutative} if it is nonpositive (i.e.\
$A = \bigoplus_{i \leq 0} A^i$) and strongly commutative
(i.e.\ $b \cd a = (-1)^{i \cd j} \cd a \cd b$ for all
$a \in A^i$ and $b \in A^j$, and also $a \cd a = 0$ if $i$ is odd).
The abbreviation is {\em CDG ring}.
Clearly $\opn{Cent}(A) = A$ for a CDG ring $A$.

Now fix some CDG ring $A$. A DG $A$-ring $B$ is called {\em central} if
the structural homomorphism $u_B : A \to B$ is a central homomorphism,
i.e.\ $u_B(A) \sub \opn{Cent}(B)$.
The category of central DG $A$-rings is a full subcategory of
$\cat{DGRng} \over A$, and it is denoted by $\cat{DGRng} \centover A$.
For $A = \Z$ we have
$\cat{DGRng} \centover \Z = \cat{DGRng} \over \Z = \cat{DGRng}$.

\begin{conv} \label{conv:330}
From here to the end of the section we have a fixed nonzero
CDG ring $A$. By default, all DG rings are central DG $A$-rings, and all
homomorphisms between them are over $A$.
\end{conv}

The following definitions and proposition first appeared in the
papers \cite{Ke1} and \cite{Ke2} of Keller. See also
\cite[Sections 1 and 4]{Ye1}.

\begin{dfn} \label{dfn:330}
Let $f_0, f_1 : B \to C$ be DG $A$-ring homomorphisms.
A {\em Keller homotopy} $\ga : f_0 \twoto f_1$ is an $A$-linear homomorphism
$\ga : B \to C$ of degree $-1$, satisfying these two conditions:
\begin{itemize}
\rmitem{i} Twisted derivation: For every $b_0 \in B^{i_0}$ and $b_1 \in
B^{i_1}$ there is equality
\[ \ga(b_0 \cd b_1) = \ga(b_0) \cd f_1(b_1) +
(-1)^{i_0} \cd f_0(b_0) \cd \ga(b_1) . \]

\rmitem{ii} Homotopy:
$\d_C \circ \ga + \ga \circ \d_B = f_1 - f_0$.
\end{itemize}
\end{dfn}

\begin{dfn} \label{dfn:331}
The  {\em integral Keller cylinder DG ring} is the matrix DG ring
\[ \opn{Cyl}_{\mrm{Ke}}(\Z) :=
\bmat{\Z & \Z[-1] \\ 0 & \Z} . \]
The differential is the commutator with the degree $1$ element
$\sbmat{0 & \bmsp 1[-1] \\ 0 & 0}$.
\end{dfn}

To make things more concrete, let $y$ be a variable of degree $1$, so that
there is an isomorphism of DG abelian groups (with zero differentials)
$\Z[-1] \iso y \ot \Z$, $1[-1] \mapsto y \ot 1$.
Then we can write
\begin{equation} \label{eqn:235}
\opn{Cyl}_{\mrm{Ke}}(\Z) =
\bmat{1 \ot \Z & y \ot \Z \\ 0 & 1 \ot \Z} .
\end{equation}
The elements
$e_0 := \sbmat{1 & 0 \\ 0 & 0}$ and
$e_1 := \sbmat{0 & 0 \\ 0 & 1}$
have degree $0$, and the element
$e_{0, 1} := \sbmat{0 & y \\ 0 & 0}$
has degree $1$. The multiplication in
$\opn{Cyl}_{\mrm{Ke}}(\Z)$ is the usual matrix multiplication, and
the differential is
$\d(e_0) = -e_{0, 1}$, $\d(e_1) = e_{0, 1}$, and
$\d(e_{0, 1}) = 0$.

\begin{dfn} \label{dfn:332}
For an arbitrary DG $A$-ring $B$ we define its Keller cylinder DG ring to be
\[ \opn{Cyl}_{\mrm{Ke}}(B) :=
\opn{Cyl}_{\mrm{Ke}}(\Z) \ot_{\Z} B =
\bmat{1 \ot B & y \ot B \\ 0 & 1 \ot B} . \]
\end{dfn}

\begin{prop} \label{prop:330}
Let $f_0, f_1 : B \to C$ be DG $A$-ring homomorphisms, and let
$\ga : B \to C$ be  $A$-linear homomorphism of degree $-1$.
The two conditions below are equivalent:
\begin{itemize}
\rmitem{i} The homomorphism $\ga$ is a Keller homotopy $f_0 \twoto f_1$.

\rmitem{ii} The $A$-linear homomorphism
$g : B \to \opn{Cyl}_{\mrm{Ke}}(C)$,
$g := \sbmat{f_0 & \bmsp y \cd \ga \\ 0 & f_1}$, is a DG ring
homomorphism.
\end{itemize}
\end{prop}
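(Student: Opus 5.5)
The plan is to compare the three defining properties of a DG ring homomorphism for $g$ (additivity and $A$-linearity, unitality, multiplicativity, commutation with differentials) with the conditions in Definition \ref{dfn:330}, entry by entry in the matrix. Since $f_0,f_1,\ga$ are $A$-linear, $g$ is automatically $A$-linear. It is also homogeneous of degree $0$: the entry $y \cd \ga(b)$ has degree $1 + (i-1) = i$ for $b \in B^i$. Unitality of $g$ amounts to $f_0(1) = 1$, $f_1(1) = 1$ and $\ga(1) = 0$. The first two hold because $f_0, f_1$ are ring homomorphisms. The third follows from condition (i) of Definition \ref{dfn:330} applied to $1 \cd 1$, which gives $\ga(1) = 2\ga(1)$. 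Conversely, if $g$ is unital then $\ga(1)=0$ directly. So this part costs nothing in either direction.

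For multiplicativity, fix $b_0 \in B^{i_0}$ and $b_1 \in B^{i_1}$ and compute $g(b_0) \cd g(b_1)$ by matrix multiplication in $\opn{Cyl}_{\mrm{Ke}}(C) = \opn{Cyl}_{\mrm{Ke}}(\Z) \ot_{\Z} C$. Here one must use the Koszul sign rule for the tensor product: $(1 \ot c)(y \ot c') = (-1)^{|c|} y \ot c c'$, while $(y \ot c)(1 \ot c') = y \ot cc'$. The diagonal entries give $f_0(b_0 b_1)$ and $f_1(b_0 b_1)$, which are automatic. The off-diagonal entry is
\[ y \cd \bigl( \ga(b_0) f_1(b_1) + (-1)^{i_0} f_0(b_0) \ga(b_1) \bigr) . \]
Comparing with $y \cd \ga(b_0 b_1)$ shows that multiplicativity of $g$ is equivalent to the twisted derivation condition (i).

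For the differential, I will write the differential of $\opn{Cyl}_{\mrm{Ke}}(C)$ as $\d(e \ot c) = \d(e) \ot c + (-1)^{|e|} e \ot \d(c)$. Using $\d(e_0) = -e_{0,1}$, $\d(e_1) = e_{0,1}$ and $\d(e_{0,1}) = 0$, the element $g(b) = e_0 \ot f_0(b) + e_{0,1} \ot \ga(b) + e_1 \ot f_1(b)$ has differential
\[ e_0 \ot \d f_0(b) + e_1 \ot \d f_1(b) + e_{0,1} \ot \bigl( f_1(b) - f_0(b) - \d \ga(b) \bigr) . \]
On the other hand, $g(\d b)$ has off-diagonal entry $e_{0,1} \ot \ga(\d b)$. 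Since $f_0, f_1$ commute with differentials, the condition $\d \circ g = g \circ \d$ reduces to $\d \circ \ga + \ga \circ \d = f_1 - f_0$, which is condition (ii).

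The main obstacle is sign bookkeeping. I need to fix precisely how $y \cd \ga(b)$ is identified with $e_{0,1} \ot \ga(b)$, and check the Koszul signs for both the product and the differential on the tensor product. These signs must match the $(-1)^{i_0}$ in Definition \ref{dfn:330} and the orientation $f_1 - f_0$ of the homotopy. Once the conventions are fixed, each condition is an equivalence entry by entry, so (i) $\Leftrightarrow$ (ii) follows.
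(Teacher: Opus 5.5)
Your proposal is correct and is the same route as the paper, which gives no details beyond calling the equivalence a straightforward calculation (deferring to Yekutieli and Keller). Your entry-by-entry comparison, with $y \cd \ga(b)$ identified with $e_{0,1} \ot \ga(b)$ and the Koszul signs you wrote down, does produce exactly the sign $(-1)^{i_0}$ and the orientation $f_1 - f_0$ of Definition \ref{dfn:330}, so the sign bookkeeping you flagged as the main obstacle is already settled by your own formulas.
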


This is is a straightforward calculation. See
\cite[Proposition 4.5]{Ye1}, following \cite[Theorem 4.3(c)]{Ke2}.

According to Keller (private communication), when the DG $A$-ring $B$ is
noncommutative semi-free (see Definition \ref{dfn:180} below),
the Keller homotopies
form an equivalence relation on the set
$\opn{Hom}_{\cat{DGRng}}(B, C)$.
Our Theorem \ref{thm:150} says more: this equivalence relation is by
isomorphisms in a canonical groupoid structure on this set of objects.

In the remainder of this section we provide upgraded versions of theorems from
\cite{Ye1} and \cite{Ye2}.

A filtered graded set $(X, F)$ is a graded set
$X = \coprod_{i \in \Z} X^i$, and
an ascending filtration $\{ F_j(X) \}_{j \geq -1}$
of $X$ by graded subsets,
such that $F_{-1}(X) = \varnothing$ and
$X = \bigcup_j F_j(X)$.

Suppose we are given a filtered graded set $(X, F)$.
Let $A^{\natural}$ the graded ring gotten by forgetting the differential of $A$.
Define $A^{\natural} \bra{X} := A^{\natural} \otimes_{\Z} \Z \bra{X}$
to be the noncommutative graded polynomial ring in the set of variables $X$
over $A^{\natural}$.
The graded ring $A^{\natural} \bra{X}$ is filtered by
$F_j(A^{\natural} \bra{X}) := A^{\natural} \bra{F_j(X)}$
for $j \geq 0$, and $F_{-1}(A \bra{X}) := 0$.

The next couple of definitions are
\cite[Definitions 12.8.3 and 12.8.6]{Ye2}, generalized in two ways:
Changing from a base ring $\K$ to a base CDG ring $A$, and
removing the requirement that the DG rings are nonpositive.
Also we require a resolution $u : \til{B} \to B$ to be a {\em surjective}
quasi-isomorphism, as in \cite[Definitions 3.8 and 3.15]{Ye1}.

\begin{dfn} \label{dfn:180}
A {\em noncommutative semi-free DG $A$-ring} is a central DG $A$-ring
$\til{B}$, which admits an isomorphism of graded $A^{\natural}$-rings
$\til{B}^{\natural} \cong A^{\natural} \bra{X}$
for some filtered graded set $(X, F)$, such that under this isomorphism we have
$\d_{\til{B}}(F_j(X)) \sub F_{j - 1}(A \bra{X})$
for all $j \geq 0$.
Such a filtered graded set $(X, F)$ is called a DG $A$-ring semi-basis of
$\til{B}$.
\end{dfn}

\begin{dfn} \label{dfn:181}
Let $B$ be a central DG $A$-ring. A {\em noncommutative semi-free DG $A$-ring
resolution of $B$} is a surjective quasi-isomorphism
$u : \til{B} \to B$ of DG $A$-rings, where $\til{B}$ is a
noncommutative semi-free DG $A$-ring.
\end{dfn}

The next theorem is an improvement of \cite[Theorem 12.8.7]{Ye2}.
The change is that here the resolution $u$ is a {\em surjective}
quasi-isomorphism. Cf.\ \cite[Exercise 12.8.19]{Ye2}.

\begin{thm}[Existence of Resolutions] \label{thm:180}
Let $A$ be a nonzero CDG ring, and let $B$ be a central DG $A$-ring. Then there
exists a noncommutative semi-free DG $A$-ring resolution
$u : \til{B} \to B$.
\end{thm}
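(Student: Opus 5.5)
We will build $\til{B}$ as a union of an increasing sequence of semi-free DG $A$-rings $\til{B}_0 \sub \til{B}_1 \sub \cdots$, with compatible DG $A$-ring homomorphisms $u_j : \til{B}_j \to B$, by successively adjoining free variables. This is the standard "killing cycles" construction. It is essentially the proof of \cite[Theorem 12.8.7]{Ye2}, modified in two places: the DG rings are no longer assumed nonpositive, and we must force surjectivity. The filtration on the variable set $X = \bigcup_j X_j$ will be $F_j(X) := X_0 \sqcup \cdots \sqcup X_j$.

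\textbf{Step 0 (surjectivity).} Let $X_0$ consist of two kinds of variables:
\begin{itemize}
\rmitem{a} for every homogeneous $b \in B^i$, a variable $x_b$ of degree $i$;
\rmitem{b} for every homogeneous $b \in B^i$, a variable $x'_b$ of degree $i-1$.
\end{itemize}
Put $\til{B}_0^{\natural} := A^{\natural} \bra{X_0}$ and try to use a differential with $\d(x'_b) := x_{\d(b)}$-style terms. It is cleaner, however, to use only variables that form acyclic pairs. For every $b \in B^{i-1}$ take variables $s_b$ of degree $i-1$ and $t_b$ of degree $i$, with $\d(s_b) := t_b$ and $\d(t_b) := 0$. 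Define $u_0(s_b) := b$ and $u_0(t_b) := \d_B(b)$. Then $\til{B}_0$ is semi-free, since $\d$ of a variable lies in the span of earlier variables; to match Definition \ref{dfn:180} we refine the filtration so that the $t_b$ come first. The map $u_0$ is a DG ring homomorphism because $\til{B}_0$ is free as a graded ring over $A^{\natural}$ and $u_0$ commutes with $\d$ on generators. Here we use the Leibniz rule and the fact that $u_B$ is central, so that extending $A$-linearly to the tensor product $A^{\natural} \ot \Z\bra{X}$ is compatible with multiplication. Finally $u_0$ is surjective, since every $b$ is the image of $s_b$.

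\textbf{Inductive step (killing homology).} Given $u_j : \til{B}_j \to B$, surjective, we make $\opn{H}(u_j)$ injective in the limit. For every cocycle $z \in \til{B}_j^i$ whose image $u_j(z) = \d_B(c)$ is a coboundary, for every choice of such $c \in B^{i-1}$, adjoin a variable $x_{z,c}$ of degree $i-1$ with $\d(x_{z,c}) := z$ and $u_{j+1}(x_{z,c}) := c$. This requires checking that $\d^2 = 0$ on the new generator, which holds because $\d(z)=0$, and that $u_{j+1}$ commutes with $\d$ on the new generator, which holds because $u_j(z) = \d(c)$. Since $\d(X_{j+1}) \sub \til{B}_j = F_j$, the semi-free condition is maintained.

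\textbf{Passage to the limit.} Let $\til{B} := \bigcup_j \til{B}_j$ and $u := \bigcup_j u_j$. The map $u$ is surjective by Step 0. It is surjective on cohomology because any cocycle $b \in B$ is $u_0(s_b)$, and $\d(s_b)=t_b$ maps to $\d b = 0$. Here the acyclic-pair trick does not directly give a cocycle lift, so in Step 0 we also adjoin, for each cocycle $b \in B^i$, a variable $z_b$ of degree $i$ with $\d(z_b) = 0$ and $u_0(z_b) = b$. For injectivity on cohomology: a cocycle $z \in \til{B}^i$ with $u(z) = \d(c)$ already lies in some $\til{B}_j$, hence becomes a coboundary in $\til{B}_{j+1}$. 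Thus $u$ is a surjective quasi-isomorphism.

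\textbf{Main obstacle.} The step needing care is verifying that the free graded ring $A^{\natural}\bra{X}$, with a differential prescribed on generators, is a central DG $A$-ring. This means showing that $\d$ extends uniquely by the Leibniz rule, that the extension is compatible with $\d_A$, and that $\d^2=0$. Since $\d^2$ is a derivation, it suffices to check $\d^2 = 0$ on generators. Similarly, the homomorphisms $u_j$ extend uniquely from generators. With these universal properties in hand, the remaining steps are bookkeeping. No boundedness assumption is used anywhere, so the argument works for unbounded $B$.
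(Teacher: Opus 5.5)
Your proposal is correct and follows essentially the paper's route. The paper's Step 1 also puts cocycle variables in $F_0(X)$, mapping onto generators of $\opn{Z}(B)$ so that $\opn{H}(u)$ is surjective. In $F_1(X)$ it puts variables mapping onto generators of $B^{\natural}$, and each non-cocycle one gets as differential a new cocycle variable mapping to $\d_B$ of its image, which is exactly your $s_b, t_b$ pairs; it then runs the same inductive killing of $\opn{Ker}(\opn{H}(u))$. The differences are cosmetic (you use all elements and all cocycles rather than generating sets), and once the $z_b$ are added in Step 0, your earlier claim that $s_b$ supplies a cocycle lift is no longer needed.
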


\begin{proof}
We only make the necessary adjustments to the proof of
\cite[Theorem 12.8.7]{Ye2}, to make the homomorphism $u$ surjective. These
adjustments are mostly is step 1 of the proof. The strategy of the proof, as it
is stated there, remains unchanged.

We are going to use \cite[Lemmas 12.8.8 and 12.8.10]{Ye2}. These lemmas are
stated for a base ring $\K$, but the proofs work just as well for a base CDG
ring $A$. Compare to  \cite[Lemmas 3.20 and 3.19]{Ye1}, where the base is a CDG
ring $A$, yet the DG rings are nonpositive.

\medskip \noindent
Step 1.
In this step we treat the cases $j = 0, 1$. (Unlike the proof in \cite{Ye2}, in
which step 1 treated only $j = 0$).

Choose a graded set $Y_1$ and a collection
$\{ b_y \}_{y \in Y_1}$ of elements
$b_y \in B^{\opn{deg}(y)}$, such that this collection generates
$B^{\natural}$ as an $A^{\natural}$-ring.
Partition $Y_1$ as follows:
$Y_1' := \{ y \in Y_1 \mid \d_B(b_y) \neq 0 \}$
and
$Y_1'' := \{ y \in Y_1 \mid \d_B(b_y) = 0 \}$.

Let $Y'_0$ be a graded set equipped with a degree $1$ bijection
$\ze : Y'_1 \to Y'_0$. Define the collection
$\{ b_y \}_{y \in Y'_0}$ of elements
$b_y \in B^{\opn{deg}(y)}$ by the formula
$b_{\ze(z)} := \d_B(b_z)$ for $z \in Y'_1$.
Note that $b_y$ is a cocycle for every $y \in Y_0'$.

Next let $Y''_0$ be a graded set, with a collection
$\{ b_y \}_{y \in Y''_0}$ of cocycles
$b_y \in \opn{Z}(B)^{\opn{deg}(y)}$,
such that this collection generates $\opn{Z}(B)$ as a $\opn{Z}(A)$-ring.

Define the graded sets $Y_0 := Y_0' \coprod Y_0''$ and
$Y_1 := Y_1' \coprod Y_1''$. Then define the graded sets
$F_0(X) := Y_0$ and $F_1(X) := Y_0 \coprod Y_1$.

Let $F_0(\til{B}) := A \bra{F_0(X)}$, with the unique differential $\d$
extending $\d_A$, and such that $\d(y) = 0$ for all $y \in Y_0 = F_0(X)$.
This is possible by \cite[Lemma 12.8.8]{Ye2}.
Let $F_0(u) : F_0(\til{B}) \to B$ be the unique DG $A$-ring homomorphism such
that  $F_0(u)(y) = b_y$ for all $y \in Y_0$.
This can be done by \cite[Lemma 12.8.10]{Ye2}.
This homomorphism has the property that
$F_0(u)(\opn{Z}(F_0(\til{B}))) = \opn{Z}(B)$,
and therefore
$\opn{H}(F_0(u)) : \opn{H}(F_0(\til{B})) \to \opn{H}(B)$ is surjective.

Finally let $F_1(\til{B}) := A \bra{F_1(X)}$, with the unique differential $\d$
extending $\d_A$, and such that $\d(y) = 0$ for all $y \in Y_0 = F_0(X)$,
$\d(y) = 0$ for all $y \in Y_1''$,
and $\d(y) = \ze(y)$ for $y \in Y_1'$.
Again, This is possible by \cite[Lemma 12.8.8]{Ye2}.
Let $F_1(u) : F_1(\til{B}) \to B$ be the unique DG $A$-ring homomorphism such
that $F_1(u)(y) := b_y$ for all $y \in F_1(X) = Y_0 \coprod Y_1$.
Again, This is possible by \cite[Lemma 12.8.10]{Ye2}.
The homomorphism $F_1(u) : F_1(\til{B}) \to B$ is surjective.
Of course $F_1(u)|_{F_0(\til{B})} = F_0(u)$.
This implies that
$\opn{H}(F_1(u)) : \opn{H}(F_1(\til{B})) \to \opn{H}(B)$ is surjective.

\medskip \noindent
Steps 2 and 3. Here the induction is on $j \geq 1$ (instead of $j \geq 0$
in the proof of \cite[Theorem 12.8.7]{Ye2}),
and the inductive hypothesis includes the condition that
$F_j(u) : F_j(\til{B}) \to \til{B}$ is surjective
(not just that $\opn{H}(F_j(u))$ is surjective).

\medskip \noindent
Step 4. No changes, same as in the proof of \cite[Theorem 12.8.7]{Ye2}.
\end{proof}

The following theorem is a generalization of the noncommutative part of
\cite[Theorem 3.22]{Ye1}. The change is that the DG rings are not required to
be nonpositive in Theorem \ref{thm:157}. We also correct a small error
in the proof of \cite[Theorem 3.22]{Ye1}, see Remark \ref{rem:185}.

\begin{thm}[Existence of Liftings] \label{thm:157}
Let $A$ be a nonzero CDG ring, let $\til{B}$ be a noncommutative semi-free
central DG $A$-ring, let $v : \til{C} \to C$ be a surjective
quasi-isomorphism of central DG $A$-rings, and let
$u : \til{B} \to C$ be a DG $A$-ring homomorphism. Then there is a lifting
$\til{u} : \til{B} \to \til{C}$ of $u$, namely a DG $A$-ring homomorphism
$\til{u}$ such that $u = v \circ \til{u}$.
\end{thm}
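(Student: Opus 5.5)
We construct $\til{u}$ by induction on the semi-basis filtration. Fix a DG $A$-ring semi-basis $(X, F)$ of $\til{B}$, so that $\til{B}^{\natural} \cong A^{\natural} \bra{X}$ and $\d(F_j(X)) \sub F_{j-1}(\til{B}) := A \bra{F_{j-1}(X)}$. Each $F_j(\til{B})$ is a DG $A$-subring of $\til{B}$. We build, for $j \geq -1$, DG $A$-ring homomorphisms $\til{u}_j : F_j(\til{B}) \to \til{C}$ with $v \circ \til{u}_j = u|_{F_j(\til{B})}$ and $\til{u}_j|_{F_{j-1}(\til{B})} = \til{u}_{j-1}$. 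For $j = -1$, take $\til{u}_{-1} : A \to \til{C}$ to be the structural homomorphism. Since $\til{B} = \bigcup_j F_j(\til{B})$, the $\til{u}_j$ glue to the required $\til{u}$. By \cite[Lemma 12.8.10]{Ye2}, which holds over a base CDG ring, a homomorphism of graded $A^{\natural}$-rings out of $A^{\natural}\bra{F_j(X)}$ into the central DG $A$-ring $\til{C}$ is determined freely by the images of the variables. It commutes with differentials as soon as it does so on each variable $x$. So the inductive step reduces to choosing, for each $x \in F_j(X) \setminus F_{j-1}(X)$ of degree $n$, an element $c_x \in \til{C}^n$ with
\[ \d_{\til{C}}(c_x) = \til{u}_{j-1}(\d x) \quad \text{and} \quad v(c_x) = u(x) . \]

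The key step is this one-variable lifting problem. Let $K := \opn{Ker}(v)$. Since $v$ is a surjective quasi-isomorphism, the long exact sequence shows $K$ is acyclic. Put $b := \til{u}_{j-1}(\d x) \in \til{C}^{n+1}$. It is a cocycle, because $\til{u}_{j-1}$ is a DG map and $\d x \in F_{j-1}(\til{B})$. By surjectivity of $v$, choose any $c' \in \til{C}^n$ with $v(c') = u(x)$. Then
\[ v(b - \d c') = u(\d x) - \d u(x) = 0 , \]
so $b - \d c'$ is a cocycle lying in $K^{n+1}$. By acyclicity of $K$ there is $k \in K^n$ with $\d k = b - \d c'$. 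Set $c_x := c' + k$. Then $\d c_x = b$ and $v(c_x) = u(x)$.

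It remains to check $v \circ \til{u}_j = u|_{F_j(\til{B})}$. Both sides are graded $A$-ring homomorphisms out of the free ring $A^{\natural}\bra{F_j(X)}$, and they agree on $A$ and on all variables, so they are equal. Compatibility with the differential is tested on generators as explained above.

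The main obstacle is handling the unboundedness: the DG rings are no longer nonpositive, so no degree-wise induction is available. This is exactly why the induction runs along the filtration index $j$ rather than along degree. Surjectivity of $v$ is what makes $K$ acyclic and lets us correct the lift $c'$; I expect this is the "small error" of \cite[Theorem 3.22]{Ye1} mentioned in Remark \ref{rem:185}, where the lift must simultaneously satisfy both conditions. One must also confirm that centrality of $\til{C}$ over $A$ is what allows the free extension across $A^{\natural}\bra{X}$ with $A$-linearity.
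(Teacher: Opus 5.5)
Your proposal is correct and follows essentially the same route as the paper. Both arguments recurse along the semi-basis filtration: each new variable is lifted using surjectivity of $v$, and the lift is then corrected by an element of the acyclic kernel $\opn{Ker}(v)$ so that $v(c_x) = u(x)$ and $\d(c_x) = \til{u}(\d x)$ hold simultaneously. The only difference is cosmetic. You start at $j = -1$ with the structural homomorphism, so the base case is handled by the same kernel-correction step, whereas the paper invokes Lemma \ref{lem:185}(1) separately to obtain cocycle lifts of the variables in $F_0(X)$.
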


Here is the diagram in $\cat{DGRng} \centover A$ depicting the theorem.

\[ \begin{tikzcd} [column sep = 10ex, row sep = 4ex]
&
\til{C}
\ar[d, "{v}"]
\\
\til{B}
\ar[r, "{u}" swap]
\ar[ur, dashed, "{\til{u}}"]
&
C
\end{tikzcd} \]

We need a lemma first.

\begin{lem} \label{lem:185}
Let $\phi : \til{M} \to M$ be surjective quasi-isomorphism of DG modules.
\begin{enumerate}
\item Given an element $m \in M^i$ such that $\d(m) = 0$, there exists an
element $\til{m} \in \til{M}^i$ such that
$\d(\til{m}) = 0$ and $\phi(\til{m}) = m$.

\item Given an element $\til{m} \in \til{M}^i$ such that $\d(\til{m}) = 0$
and $\phi(\til{m}) = 0$, there exists an element
$\til{m}' \in \til{M}^{i - 1}$ such
that $\d(\til{m}') = \til{m}$ and $\phi(\til{m}') = 0$.
\end{enumerate}
\end{lem}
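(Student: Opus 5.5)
The plan is to use the long exact cohomology sequence attached to the short exact sequence of DG modules
\[ 0 \to K \to \til{M} \xar{\phi} M \to 0 , \qquad K := \opn{Ker}(\phi) . \]
This sequence is exact because $\phi$ is surjective. Since $\phi$ is a quasi-isomorphism, the long exact sequence
\[ \cdots \to \opn{H}^{i}(K) \to \opn{H}^i(\til{M}) \to \opn{H}^i(M) \to \opn{H}^{i+1}(K) \to \cdots \]
forces $\opn{H}^i(K) = 0$ for all $i$, so $K$ is acyclic. Both items then reduce to elementary manipulations with this acyclicity and with the surjectivity of $\phi$.

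For item (2), the argument is direct. The element $\til{m}$ lies in $K^i$ because $\phi(\til{m}) = 0$, and it is a cocycle there. Since $K$ is acyclic, there is $\til{m}' \in K^{i-1}$ with $\d(\til{m}') = \til{m}$. Being in $K$, this element satisfies $\phi(\til{m}') = 0$, as required.

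For item (1), I will take a cocycle $m \in M^i$ and proceed in three steps.
\begin{enumerate}
\item Since $\opn{H}(\phi)$ is surjective, pick a cocycle $\til{n} \in \til{M}^i$ and an element $p \in M^{i-1}$ with $\phi(\til{n}) = m + \d(p)$.
\item Since $\phi$ is surjective, lift $p$ to some $\til{p} \in \til{M}^{i-1}$.
\item Set $\til{m}_1 := \til{n} - \d(\til{p})$. This is a cocycle, and $\phi(\til{m}_1) = m + \d(p) - \d(p) = m$.
\end{enumerate}
So $\til{m} := \til{m}_1$ works, and in this item surjectivity on cohomology suffices; the acyclicity of $K$ is not needed.

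There is no real obstacle here. The only point needing care is to use surjectivity of $\phi$ itself, and not only of $\opn{H}(\phi)$, both in deriving the acyclicity of $K$ and in lifting $p$.
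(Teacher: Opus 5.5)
Your proposal is correct and follows the paper's proof. Item (2) is the paper's argument via acyclicity of $\opn{Ker}(\phi)$, and your explicit element chase in item (1) is the unpacked form of the paper's diagram argument that $\opn{Z}(\phi)$ is surjective because $\opn{B}(\phi)$ and $\opn{H}(\phi)$ are, with your long-exact-sequence justification of the kernel's acyclicity filling in a step the paper only asserts.
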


\begin{proof}
(1) For this item, the only condition on $\opn{H}(\phi)$ that is needed is
surjectivity. Since $\phi$ is surjective, it follows that
$\opn{B}(\phi) : \opn{B}(\til{M}) \to \opn{B}(M)$ is surjective.
Consider the commutative diagram
\[ \begin{tikzcd}
0
\ar[r]
&
\opn{B}(\til{M})
\ar[r]
\ar[d, "{\opn{B}(\phi)}"]
&
\opn{Z}(\til{M})
\ar[r]
\ar[d, "{\opn{Z}(\phi)}"]
&
\opn{H}(\til{M})
\ar[r]
\ar[d, "{\opn{H}(\phi)}"]
&
0
\\
0
\ar[r]
&
\opn{B}(M)
\ar[r]
&
\opn{Z}(M)
\ar[r]
&
\opn{H}(M)
\ar[r]
&
0
\end{tikzcd} \]
of graded modules, with exact rows, and such that $\opn{B}(\phi)$ and
$\opn{H}(\phi)$ are surjective. Then the middle homomorphism  $\opn{Z}(\phi)$
is also surjective.
The element $m$ belongs to $\opn{Z}^i(M)$, and therefore there is an element
$\til{m}$ in $\opn{Z}^i(\til{M})$ such that $\phi(\til{m}) = m$.

\medskip \noindent
(2) Define $\til{K} := \opn{Ker}(\phi)$. Because $\phi$ is a
surjective quasi-isomorphism, the DG module $\til{K}$ is acyclic.
The element $\til{m}$ belongs to $\opn{Z}^i(\til{K})$.
Hence there exists an element $\til{m}' \in \til{K}^{i - 1}$ such that
$\d(\til{m}') = \til{m}$.
\end{proof}

\begin{proof}[Proof of Theorem \tup{\ref{thm:157}}]
We repeat the proof of \cite[Theorem 3.22]{Ye1}, with the needed modifications,
and with a correction of the error.

Let $(X, F)$ be a DG $A$-ring semi-basis of $\til{B}$.
For $j \geq 0$ let
$F_j(\til{B})$ be the $A$-subring of $\til{B}$ generated by $F_j(X)$.
So $F_j(\til{B})$ is a DG $A$-subring of $\til{B}$,
$F_j(\til{B})^{\natural} = A^{\natural} \bra{F_j(X)}$,
and
$\til{B} = \bigcup_j F_j(\til{B})$.

We will construct a consistent sequence of DG $A$-ring homomorphisms
$\til{u}_j : F_j(\til{B}) \to \til{C}$,
satisfying $v \circ \til{u}_j = u$ on $F_j(\til{B})$.
The construction is by recursion on $j \in \N$.
Then $\til{u} := \lim_{j \to} \til{u}_j$ will have the required properties.

We start with $j = 0$. Take any $x \in F_0(X)$, and let
$k := \opn{deg}(x)$. Since $\d(x) = 0$ and $v$ is a surjective
quasi-isomorphism, Lemma \ref{lem:185}(1) says that
there exists an element $\til{c} \in \til{C}^k$ such that
$\d(\til{c}) = 0$ and $v(\til{c}) = u(x)$.
We define $\til{u}_0(x) := \til{c}$.
The resulting function
$\til{u}_0 : F_0(X) \to \til{C}$
extends uniquely to a DG $A$-ring homomorphism
$\til{u}_0 : F_0(\til{B}) \to \til{C}$, by
\cite[Lemma 12.8.10]{Ye2}.
Since $v(\til{u}_0(x)) = u(x)$ for all $x \in F_0(X)$, it follows that
$v \circ \til{u}_0 = u$ on $F_0(\til{B})$.

Next consider any $j \in \N$, and
assume a DG $A$-ring homomorphism
$\til{u}_{j} : F_j(\til{B}) \to \til{C}$ has been constructed, satisfying
$v \circ \til{u}_j = u$ on $F_j(\til{B})$.

Take any element $x \in F_{j + 1}(X) - F_j(X)$,
and let $k := \opn{deg}(x)$.
Since $v$ is surjective, there exists
$\til{c} \in \til{C}^{k}$ such that
$v(\til{c}) = u(x)$.

The element $\d(x)$ belongs to $F_j(\til{B})$, and it is a cocycle in this DG
ring. Therefore the element
$\til{u}_j(\d(x))$ is a cocycle in $\til{C}$.
Define
$\til{c}' := \til{u}_j(\d(x)) - \d(\til{c}) \in \til{C}^{k + 1}$,
which is also a cocycle in $\til{C}$.
We have
\[ v(\til{c}') = v(\til{u}_j(\d(x)) - v(\d(\til{c})) =
u(\d(x)) - v(\d(\til{c})) = \d(u(x) - v(\til{c})) = \d(0) = 0 . \]
By Lemma \ref{lem:185}(2) there exists an element
$\til{c}'' \in \til{C}^k$ such that
$\d(\til{c}'') = \til{c}'$
and $v(\til{c}'') = 0$.
Let us define
$\til{u}_{j + 1}(x) := \til{c} + \til{c}''$.
Then
\begin{equation} \label{eqn:186}
v(\til{u}_{j + 1}(x)) = v(\til{c} + \til{c}'') = v(\til{c}) = u(x)
\end{equation}
\begin{equation} \label{eqn:305}
\d(\til{u}_{j + 1}(x)) = \d(\til{c} + \til{c}'') =
\d(\til{c}) + \til{c}' = \til{u}_j(\d(x)) .
\end{equation}

In this way we obtain a degree $0$ function
$\til{u}_{j + 1} : F_{j + 1}(X) \to \til{C}$,
which restricts to $\til{u}_{j}$ on $F_{j}(X)$.
According to \cite[Lemma 12.8.10]{Ye2}, the function
$\til{u}_{j + 1}$ extends uniquely to a homomorphism of graded
$A^{\natural}$-rings
$\til{u}_{j + 1} : F_{j + 1}(\til{B})^{\natural} \to
\til{C}^{\natural}$.
The homomorphism $\til{u}_{j + 1}$ restricts to $\til{u}_{j}$ on
$F_{j}(\til{B})$.
By equation (\ref{eqn:305}), $\til{u}_{j + 1}$ is a DG ring homomorphism.
Equation (\ref{eqn:186}) implies that
$v \circ \til{u}_{j + 1} = u$ on $F_{j + 1}(\til{B})$.
\end{proof}

Here is a construction in DG rings, which will be used in the proof of
the next theorem, and also in Sections \ref{sec:algeb-high-homs} and
\ref{sec:props-hom-grpd}.

\begin{prop} \label{prop:425}
Suppose  $B = \boplus_{i \geq 0} B^i$ is a nonnegative DG ring, and
$C_0 \to C_1 \to \cdots$ is direct system of DG rings, with transition
homomorphisms $f_{i, j} : C_i \to C_j$.
Define the DG abelian group
\[ D := \boplus_{i \geq 0} B^i \ot_{\Z} C_i . \]
Next define the multiplication
\[ \begin{aligned}
&
\bigl( B^{i_1} \ot_{\Z} C_{i_1}^{j_1} \bigr) \times
\bigl( B^{i_2} \ot_{\Z} C_{i_2}^{j_2} \bigr) \to
\bigl( B^{i_1 + i_2} \ot_{\Z} C_{i_1 + i_2}^{j_1 + j_2} \bigr)
\\ &
(b_1 \ot c_1) \cdot (b_2 \ot c_2) :=
(-1)^{j_1 \cd i_2} \cd (b_1 \cd b_2) \ot
\bigl( f_{i_1, i_1 + i_2}(b_1) \cd f_{i_2, i_1 + i_2}(b_2) \bigr)
\end{aligned} \]
on $D$. Then the DG abelian group $D$ is a DG ring.
\end{prop}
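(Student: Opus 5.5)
The proof is a direct verification of the DG ring axioms on $D$, organized so that one fact does most of the work: a homogeneous tensor $b \ot c$ with $b \in B^i$ and $c \in C_i$ behaves like the tensor product $B \ot_{\Z} C_\infty$, where $C_\infty := \colim{i} C_i$, except that it remembers the index $i$. I read the product formula as intended to be
\[ (b_1 \ot c_1) \cd (b_2 \ot c_2) = (-1)^{j_1 \cd i_2} \cd (b_1 \cd b_2) \ot \bigl( f_{i_1, i_1 + i_2}(c_1) \cd f_{i_2, i_1 + i_2}(c_2) \bigr) , \]
since the transition maps act on the $C$-factors. For the DG abelian group structure I take the differential
\[ \d(b \ot c) := \d_B(b) \ot f_{i, i+1}(c) + (-1)^i \cd b \ot \d_C(c) , \qquad b \in B^i , \ c \in C_i . \]
This lands in $D$ because $B$ is nonnegative and $\d_B$ raises degree by one. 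It satisfies $\d \circ \d = 0$ for three reasons: the transition maps are DG homomorphisms, they satisfy $f_{i+1, i+2} \circ f_{i, i+1} = f_{i, i+2}$, and the two cross terms cancel by the usual sign.

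\textbf{Step 1: ring axioms.} First I check the unit $1_B \ot 1_{C_0}$. It lies in $B^0 \ot C_0$, and $f_{0, i}(1) = 1$, so no sign appears. Next, associativity: both bracketings of a triple product land in $B^{i_1 + i_2 + i_3} \ot C_{i_1 + i_2 + i_3}$. In each the $C$-factor is
\[ f_{i_1, N}(c_1) \cd f_{i_2, N}(c_2) \cd f_{i_3, N}(c_3) , \qquad N := i_1 + i_2 + i_3 . \]
To see this, push $f_{i_1 + i_2, N}$ through the product using that it is a ring homomorphism, and use the composition rule for the transition maps. It then remains to compare the signs. One bracketing gives $(-1)^{j_1 i_2 + (j_1 + j_2) i_3}$ and the other gives $(-1)^{j_2 i_3 + j_1 (i_2 + i_3)}$, and these agree. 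Bilinearity is clear.

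\textbf{Step 2: the graded Leibniz rule.} This is the only step that needs real care. The total degree of $b \ot c$ is $i + j$. I will expand $\d\bigl( (b_1 \ot c_1)(b_2 \ot c_2) \bigr)$ into four terms:
\begin{itemize}
\item the term where $\d_B$ hits $b_1$, and the term where $\d_B$ hits $b_2$, each using the Leibniz rule in $B$;
\item the term where $\d_C$ hits $f_{i_1, N}(c_1)$, and the term where $\d_C$ hits $f_{i_2, N}(c_2)$, each using the Leibniz rule in $C_N$ and that the $f$'s commute with $\d$.
\end{itemize}
I then match these four terms against the expansion of $\d(b_1 \ot c_1) \cd (b_2 \ot c_2) + (-1)^{i_1 + j_1} (b_1 \ot c_1) \cd \d(b_2 \ot c_2)$. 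The transition maps match automatically, again by the composition rule. So everything reduces to four sign identities in the exponents $i_1, j_1, i_2, j_2$. Each identity is the standard Koszul sign bookkeeping for a tensor product of DG rings, exactly as in $B \ot_{\Z} C_N$.

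\textbf{Main obstacle and shortcut.} The expected obstacle is purely the sign bookkeeping in Step 2. I would sidestep it as follows. For fixed $i_1, i_2$, the elements involved all lie, after applying transition maps, in the ordinary tensor product DG ring $B \ot_{\Z} C_M$ with $M := i_1 + i_2 + 1$. Consider the map $\Phi$ sending $b \ot c$ to $b \ot f_{i, M}(c)$. This $\Phi$ is compatible with the products and differentials of $D$ and of $B \ot_{\Z} C_M$ on the relevant pieces. So every identity needed in Steps 1 and 2 is the image of the corresponding identity in the honest tensor product DG ring $B \ot_{\Z} C_M$, whose axioms are standard.

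This requires no injectivity of $\Phi$: the identities are identities of formulas, which hold in $B \ot_{\Z} C_M$ and are natural in the $C$-variables. I would make this precise by proving the identities in $B \ot_{\Z} C'$ for $C'$ any DG ring receiving the $C_i$. The formulas defining the product and differential on $D$ are exactly the tensor product formulas, with the transition maps applied at the appropriate indices.
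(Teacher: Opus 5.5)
Your direct verification in Steps 1 and 2 is correct and is exactly the ``easy calculation'' the paper has in mind. Your correction of the product formula is right: the transition maps act on $c_1, c_2$, not on $b_1, b_2$. Your differential $\d(b \ot c) = \d_B(b) \ot f_{i, i+1}(c) + (-1)^i \cd b \ot \d_C(c)$, which the paper leaves implicit, is also the right one. The four sign identities in Step 2 do hold; e.g.\ the term containing $\d_C(c_1)$ carries the sign $(-1)^{i_1 + (j_1 + 1) \cd i_2}$ on both sides.

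The shortcut, however, is not valid as stated. It breaks exactly at the claim that no injectivity of $\Phi$ is needed.
\begin{enumerate}
\item Since $\Phi$ goes from $D$ to $B \ot_{\Z} C_M$, the Leibniz rule in $B \ot_{\Z} C_M$ only yields $\Phi(L) = \Phi(R)$ for the two sides $L, R$ of the identity in $D$. To conclude $L = R$ you need $\Phi$ to be injective on the components where $L$ and $R$ live.
\item Put $N := i_1 + i_2$ and $M = N + 1$. On the $B^{N+1} \ot C_{N+1}$-component (the $\d_B$-terms) $\Phi$ is the identity. On the $B^{N} \ot C_{N}$-component (the $\d_C$-terms) it is $\opn{id} \ot f_{N, N+1}$, which need not be injective.
\item In the paper's own application in Theorem \ref{thm:312}, the transition map is the surjective quasi-isomorphism $v$, which is generally not injective. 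And if $C_{N+1} = 0$, then $\Phi$ kills that component and your argument verifies nothing there.
\item Appealing to naturality, or to an arbitrary cocone $C'$, does not help. Naturality transports identities forward along homomorphisms, not backward.
\end{enumerate}

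The fix is to choose the comparison ring separately for each component. For $k \in \{ N, N+1 \}$, work in $B \ot_{\Z} C_k$ with the elements $b_1 \ot f_{i_1, k}(c_1)$ and $b_2 \ot f_{i_2, k}(c_2)$. The $B^k \ot C_k$-component of each side of the identity in $D$ coincides literally with the $B^k \ot C_k$-component of the corresponding side of the Leibniz rule there. Since $B^k \ot C_k$ is a direct summand of $B \ot_{\Z} C_k$, this is a projection argument and needs no injectivity. For associativity, both sides live in the single component of index $i_1 + i_2 + i_3$, so taking $M$ equal to that index already works. Alternatively, simply write out the four signs, which is no longer than the shortcut.
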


The proof is an easy calculation. The prototypical example is:

\begin{exa} \label{exa:310}
Take $B := \opn{Cyl}_{\mrm{Ke}}(\Z)$, $C$ arbitrary, and $C_i := C$.
Then
\[ \opn{Cyl}_{\mrm{Ke}}(C) = \opn{Cyl}_{\mrm{Ke}}(\Z) \ot_{\Z} C =
\boplus_{0 \leq i \leq 1}
\opn{Cyl}_{\mrm{Ke}}(\Z)^i \ot_{\Z} C_i . \]
This example has a matrix presentation:
\[ \begin{aligned}
&
i = 0
\bmsp \bmsp
\opn{Cyl}_{\mrm{Ke}}(\Z)^0 \ot_{\Z} C_0 =
\bmat{1 \ot C &  0 \\ 0 & 1 \ot C}
\\
&
i = 1
\bmsp \bmsp
\opn{Cyl}_{\mrm{Ke}}(\Z)^1 \ot_{\Z} C_1 =
\bmat{0 & y \ot C \\ 0 & 0} .
\end{aligned} \]
Cf.\ Definition \ref{dfn:332}.
\end{exa}

The next theorem is a generalization of the noncommutative part of
\cite[Theorem 3.22]{Ye1}, removing the nonpositivity condition. The proof is
also much shorter.

\begin{thm}[Existence of Homotopies] \label{thm:312}
Let $A$ be a nonzero CDG ring, let $\til{B}$ be a noncommutative semi-free
DG $A$-ring, let $v : \til{C} \to C$ be a surjective
quasi-isomorphism of central DG $A$-rings, let $u : \til{B} \to C$ be a DG
$A$-ring homomorphism, and let
$\til{u}_0, \til{u}_1 : \til{B} \to \til{C}$
be liftings of $u$. Then there exists a Keller homotopy
$\ga : \til{u}_0 \twoto \til{u}_1$, such that $v \circ \ga = 0$.
\end{thm}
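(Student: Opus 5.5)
We will reduce the statement to Theorem \ref{thm:157}, applied to a suitable surjective quasi-isomorphism built from the construction of Proposition \ref{prop:425}. Since the remark before the theorem says Proposition \ref{prop:425} will be used here, we use it to manufacture a DG ring that encodes \emph{Keller homotopies killed by $v$}.

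First, the target. Take $B := \opn{Cyl}_{\mrm{Ke}}(\Z)$, which is nonnegative with $B^0 = \Z e_0 \oplus \Z e_1$ and $B^1 = \Z e_{0,1}$. Take the direct system $C_0 := \til{C}$ and $C_1 := K := \opn{Ker}(v)$, regarded inside $\til{C}$. Strictly, Proposition \ref{prop:425} wants a direct system of DG rings. Here $K$ is a two-sided DG ideal of $\til{C}$, so the multiplication formula, which lands in $B^1 \ot C_1$ only via products with a factor in $B^1 \ot K$, still makes sense. The same easy calculation shows that
\[ D := \bmat{1 \ot \til{C} & y \ot K \\ 0 & 1 \ot \til{C}} \]
is a DG subring of $\opn{Cyl}_{\mrm{Ke}}(\til{C})$. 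It is closed under the differential because $\d(e_0 \ot c) = -e_{0,1} \ot c + e_0 \ot \d c$ has off-diagonal term $c$, which need not lie in $K$.

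That closure failure forces a correction: we must take instead the subring
\[ D := \bigl\{ \sbmat{c_0 & y c \\ 0 & c_1} \,:\, c \in \til{C},\ v(c_0) = v(c_1),\ v(c) = 0 \bigr\} . \]
The differential of such an element has off-diagonal entry $c_1 - c_0 - \d c$ (up to signs), which lies in $K$. So $D$ is a DG $A$-subring of $\opn{Cyl}_{\mrm{Ke}}(\til{C})$.

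Next, define $w : D \to \til{C} \times_C \til{C}$ by $w := (c_0, c_1)$. This target is the fiber product DG ring $\{(c_0,c_1) : v(c_0) = v(c_1)\}$, and $w$ is clearly surjective. By Proposition \ref{prop:330}, a DG ring homomorphism $\til{B} \to D$ lying over $(\til{u}_0, \til{u}_1)$ is exactly a Keller homotopy $\ga : \til{u}_0 \twoto \til{u}_1$ with $v \circ \ga = 0$. Since $\til{u}_0$ and $\til{u}_1$ both lift $u$, the pair $(\til{u}_0, \til{u}_1) : \til{B} \to \til{C} \times_C \til{C}$ is a DG ring homomorphism. Theorem \ref{thm:157}, applied to $w$, then produces the desired lifting, hence $\ga$.

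The main obstacle is showing that $w$ is a quasi-isomorphism. Its kernel is $y \ot K$ with the differential induced from $D$, which is, up to shift and sign, just $K[-1]$ with differential $-\d_K$. Since $v$ is a surjective quasi-isomorphism, $K$ is acyclic, so $\opn{Ker}(w)$ is acyclic. Together with surjectivity of $w$, the long exact cohomology sequence then shows $w$ is a quasi-isomorphism. This still requires carefully checking the signs in the differential of $D$, and that $\til{C} \times_C \til{C}$ and $D$ are central DG $A$-rings, which holds because both are subrings of products or tensor products of central ones.
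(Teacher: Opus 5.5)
Your corrected argument is correct and is essentially the paper's proof. The paper lifts $\sbmat{\til{u}_0 & 0 \\ 0 & \til{u}_1}$ along the surjective quasi-isomorphism $\opn{Cyl}_{\mrm{Ke}}(\til{C}) \to \sbmat{1 \ot \til{C} & y \ot C \\ 0 & 1 \ot \til{C}}$, which it builds from Proposition \ref{prop:425} with the system $\til{C} \xar{v} C$. Your map $D \to \til{C} \times_C \til{C}$ is exactly the pullback of that map along the inclusion of the diagonal matrices $\til{C} \times_C \til{C}$, and it has the same acyclic kernel $y \ot \opn{Ker}(v)$.

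Do delete the first version of $D$, the one with corner $y \ot K$. As you yourself observe, it is not closed under the differential, so the sentence asserting that it is a DG subring is false.
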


See these diagrams for an illustration.

\[ \begin{tikzcd} [column sep = 10ex, row sep = 4ex]
&
\til{C}
\ar[d, "{v}"]
\\
\til{B}
\ar[r, "{u}" swap]
\ar[ur, "{\til{u}_i}"]
&
C
\end{tikzcd}
\qquad
\begin{tikzcd} [column sep = 14ex, row sep = 8ex]
\til{B}
\arrow[r, bend left, "\til{u}_0", "" {name = U, inner sep = 1pt, below}]
\arrow[r, bend right, "\til{u}_1" {below}, "" {name=D , inner sep = 1pt}]
&
\til{C}
\arrow[Rightarrow, from = U, to = D, "\ga"]
\end{tikzcd} \]

\begin{proof}
Define the DG ring $D$ to be
\begin{equation} \label{eqn:330}
D :=
\bigl( \opn{Cyl}_{\mrm{Ke}}(\Z)^0 \ot_{\Z} \til{C} \bigr)
\oplus
\bigl( \opn{Cyl}_{\mrm{Ke}}(\Z)^1 \ot_{\Z} C \bigr) =
\bmat{1 \ot \til{C} & y \ot C \\ 0 & 1 \ot \til{C}} .
\end{equation}
This is the construction in Proposition \ref{prop:425},
with first tensor factor $\opn{Cyl}_{\mrm{Ke}}(\Z)$, and second tensor factor
the direct system of DG rings $(\til{C} \xar{v} C)$.
The system of DG rings $(\til{C} \xar{\opn{id}} \til{C})$ goes to the system
$(\til{C} \xar{v} C)$ by $(\opn{id} \to v)$, and this induces a DG ring
homomorphism
\[ w : \opn{Cyl}_{\mrm{Ke}}(\til{C}) \to D, \bmsp
w := \bmat{\opn{id}_{\til{C}} & y \ot v \\ 0 & \opn{id}_{\til{C}}} , \]
which is a surjective quasi-isomorphism.
Let
\[ u' : \til{B} \to D, \bmsp
u' := \bmat{\til{u}_{0} & 0 \\ 0 & \til{u}_{1}}  . \]
According to Theorem \ref{thm:157} there is a lifting
$\til{u}' : \til{B} \to \opn{Cyl}_{\mrm{Ke}}(\til{C})$
of $u'$ with respect to $w$; in a diagram:
\[ \begin{tikzcd} [column sep = 10ex, row sep = 4ex]
&
\opn{Cyl}_{\mrm{Ke}}(\til{C})
\ar[d, "{w}"]
\\
\til{B}
\ar[r, "{u'}" swap]
\ar[ur, dashed, "{\til{u}'}"]
&
D
\end{tikzcd} \]
In matrix notation we can express $\til{u}'$ as
\[ \til{u}' = \bmat{\til{u}_{0} & y \cd \ga \\ 0 & \til{u}_{1}}  \]
for a certain $A$-linear homomorphism $\ga : \til{B} \to \til{C}$ of degree
$-1$. Then
$\ga : \til{u}_0 \twoto \til{u}_1$
is a Keller homotopy, and $v \circ \ga = 0$.
\end{proof}

\begin{rem} \label{rem:185}
There is a small error in the proof of \cite[Theorem 3.22]{Ye1}.
Here is a description of the error, in terms of the proof of Theorem
\ref{thm:157} above. We had neglected to require that the element
$\til{c}''$ should satisfy $v(\til{c}'') = 0$.
Without this condition one can't guarantee that equation (\ref{eqn:186})
holds. This error has now been remedied in the proof of Theorem \ref{thm:157},
using Lemma \ref{lem:185}(2).

Lemma \ref{lem:185}(1) is needed because here the DG rings are not assumed
to be nonpositive. All we know here is that $\d(x) = 0$ for $x \in F_0(X)$.

We take this opportunity to list a few more small errors in the paper
\cite{Ye1}, and to fix them.

In Theorem 0.3.3 of that paper, and in the text immediately preceding it, one
needs to assume that the DG ring homomorphism $v : A' \to A$ is a {\em
quasi-isomorphism}. This very same correction, and its variants
$v' : A'' \to A'$ and $v_k : A_k \to A_{k  -1}$, have to be made in
several locations in that paper: Setup 5.7, Definition 5.8,
Setup 6.1, Definition 7.4 and Theorem 7.6.
This repeated error did
not effect Theorems 0.3.4 and 0.3.5 of \cite{Ye1}, since the quasi-isomorphism
was built into their assumptions (there is only one DG ring called $A$, without
primes or subscripts, so that $v$ is just the identity of $A$).

There are three typographic mistakes in \cite{Ye1} that we wish to point out.
In the third line of the proof of Theorem 3.22, the expression $X^i$ should be
replaced by $X^j$, so that the whole formula becomes
$F_i(X) = \bigcup_{-i \leq j \leq 0} X^j$.
Next, in Definition 5.6 the expression
$\opn{Hom}_{B^{\mrm{en}}}(B, \til{I})$
should be changed to
$\opn{Hom}_{\til{B}^{\mrm{en}}}(B, \til{I})$.
Finally, in the proof of Theorem 6.11, 5 lines before the end, the expression
$\opn{R}(y^{\dag} / r)$ should be changed to
$\opn{R}(s^{\dag} / r)$.
\end{rem}

\section{On the Structure of Horns}

This section is wholly about certain properties of simplicial sets.

First let's establish notation.
For a number $p \in \N$ let
$[p] := \{ 0, \ldots, p \}$.
The simplex category $\Simp$ has the ordered sets $[p]$ as objects, and the
nondecreasing functions
$\si : [p] \to [q]$ as morphisms.
A simplicial set is a functor
$X : \Simp^{\mrm{op}} \to \cat{Set}$.
A map of simplicial sets $f : X \to Y$ is by definition a a morphism of functors
$\Simp^{\mrm{op}} \to \cat{Set}$.
Thus the category of simplicial sets is
$\cat{SSet} = \cat{Fun}(\Simp^{\mrm{op}}, \cat{Set})$.
Given a simplicial set $X$, we usually write
$X = \{ X_{q} \}_{q \in \N}$, where $X_p := X([p])$.

For every $q$ there is the simplicial set
$\Simp^q = \{ \Simp^q_p \}_{p \in \N}$, which as a functor
$\Simp^{\mrm{op}} \to \cat{Set}$ equals $\opn{Hom}_{\Simp}(-, [q])$.
Its set of $p$-dimensional simplices is
$\Simp^q_p = \opn{Hom}_{\Simp}([p], [q])$.
An element $\si$ of $\Simp^q_p$ is the same as a nondecreasing sequence
$(i_0, \ldots, i_p)$ of elements of $[q]$.

The next fact will be used several times, so we state it as a proposition.
It is
\cite[Proposition 1.1.0.12, tag =
\href{https://kerodon.net/tag/04Z8}{\tt 04Z8}]{Lu2}.

\begin{prop} \label{prop:445}
For every simplicial set $X$ and every $p \in \N$ there is a canonical
isomorphism of sets
$X_p \cong \opn{Hom}_{\cat{SSet}}(\Simp^p, X)$.
\end{prop}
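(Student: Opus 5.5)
This is the Yoneda lemma, and I would prove it by writing down the two maps explicitly and checking that they are mutually inverse. Define
\[ \Phi : \opn{Hom}_{\cat{SSet}}(\Simp^p, X) \to X_p, \bmsp \Phi(f) := f_p(\opn{id}_{[p]}) , \]
where $\opn{id}_{[p]} \in \opn{Hom}_{\Simp}([p], [p]) = \Simp^p_p$. In the other direction, for $x \in X_p$ define $\Psi(x) : \Simp^p \to X$ by
\[ \Psi(x)_r(\si) := X(\si)(x) \quad \text{for } \si \in \Simp^p_r = \opn{Hom}_{\Simp}([r], [p]) . \]

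The first step is to check that $\Psi(x)$ is a map of simplicial sets. For $\tau : [r'] \to [r]$ in $\Simp$, the structure map of $\Simp^p$ sends $\si$ to $\si \circ \tau$. Functoriality of $X$ then gives
\[ X(\si \circ \tau)(x) = X(\tau)\bigl( X(\si)(x) \bigr) , \]
which is exactly the naturality square.

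The second step is to verify $\Phi \circ \Psi = \opn{id}$. This holds because $X(\opn{id}_{[p]})(x) = x$.

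The third step is to verify $\Psi \circ \Phi = \opn{id}$. Take $f : \Simp^p \to X$ and $\si \in \Simp^p_r$. Observe that $\si = \Simp^p(\si)(\opn{id}_{[p]})$. Naturality of $f$ then gives
\[ f_r(\si) = X(\si)\bigl( f_p(\opn{id}_{[p]}) \bigr) = \Psi(\Phi(f))_r(\si) . \]

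The word ``canonical'' means natural in $X$. For a map $g : X \to Y$ we have $\Phi(g \circ f) = g_p(\Phi(f))$, so $\Phi$ commutes with $g$. None of these steps is a real obstacle. The only point needing care is the variance: $\Simp^p_r$ is contravariant in $[r]$, so precomposition with $\tau$ must be matched with $X(\tau)$ in the correct order.
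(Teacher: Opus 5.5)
Your proof is correct, and it is the same argument the paper relies on: the paper gives no proof of its own and cites Lurie's Kerodon (tag 04Z8), where this statement is the Yoneda lemma for $\Simp^p = \opn{Hom}_{\Simp}(-, [p])$. Your explicit maps $\Phi(f) = f_p(\opn{id}_{[p]})$ and $\Psi(x)_r(\si) = X(\si)(x)$ supply the omitted details. Your checks are complete: the naturality of $\Psi(x)$, the two inverse identities with the variance handled correctly, and naturality in $X$.
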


In particular, taking $X := \Simp^q$, we have
$\opn{Hom}_{\cat{SSet}}(\Simp^p, \Simp^q) =
\Simp^q_p = \opn{Hom}_{\Simp}([p], [q])$.

Consider the simplicial set $\Simp^q$ for some $q \in \N$.
For every $0 \leq i \leq q$ there is the horn
$\bsym{\La}^{q}_{i}$. This is the simplicial subset of $\Simp^q$
consisting of the elements (or simplices or nondecreasing functions)
$\si : [p] \to [q]$ such that
$\si([p]) \cup \{ i \}$ does not equal $[q]$.
See
\cite[Construction 1.2.4.1, tag =
\href{https://kerodon.net/tag/000U}{\tt 000U}]{Lu2}.

{\em For the rest of this section we fix a horn $\bsym{\La}^{q}_{i}$}.

Recall that for every number $j \in [q]$ we have the coface (or coboundary)
function $\pa^j : [q - 1] \to [q]$, which is the injective function in $\Simp$
whose image does not contain $j$.
It corresponds to the map of simplicial sets
$\pa^j : \Simp^{q - 1} \to \Simp^q$.
Thus $\pa^j(\Simp^{q - 1})$ is a simplicial subset of $\Simp^q$, isomorphic
to $\Simp^{q - 1}$ via $\pa^j$.

Let us define the set
\begin{equation} \label{eqn:190}
J_0 := [q] - \{ i \} = \{ 0, 1, \ldots, i - 1, i + 1, \ldots, q \} .
\end{equation}
For $j \in J_0$, the image of the coface map
$\pa^j : [q - 1] \to [q]$ must contain $i$. Since the cardinality of
$\pa^j([q - 1])$ is $q$,
the subset $\pa^j([q - 1])$ does not equal $[q]$. This implies that
the simplicial set $\pa^j(\Simp^{q - 1})$ is inside the horn
$\bsym{\La}^{q}_{i}$.
Thus, writing $\al_j := \pa^j$, we have an injective map of simplicial sets
\begin{equation} \label{eqn:193}
\al_j : \Simp^{q - 1} \to \bsym{\La}^{q}_{i} .
\end{equation}
Taking the disjoint union on all $j \in J_0$ we obtain a map of
simplicial sets
\begin{equation} \label{eqn:171}
\al : \coprod_{j \in J_0} \Simp^{q - 1} \to \bsym{\La}^{q}_{i} .
\end{equation}

Next define the set
\begin{equation} \label{eqn:191}
J_1 := \{ (k, l) \in J_0 \times J_0 \mid k < l \} .
\end{equation}
For every $(k, l) \in J_2$
let $\al_{k, l} : [q - 2] \to [q]$
be the injective function whose image does not contain $k$ and $l$.
The function $\al_{k, l}$ factors in precisely two ways through
$[q - 1]$~:
there is a unique injective function
$\be_{k, l} : [q - 2] \to [q - 1]$
such that $\al_{k, l} = \al_k \circ \be_{k, l}$, and a unique injective function
$\ga_{k, l} : [q - 2] \to [q - 1]$ such that
$\al_{k, l} = \al_l \circ \ga_{k, l}$.
It is pretty easy to see that
$\be_{k, l} = \pa^l$ and $\ga_{k, l} = \pa^k$.
The corresponding maps of simplicial sets are
\begin{equation} \label{eqn:194}
\be_{k, l} , \lmsp \ga_{k, l} : \Simp^{q - 2} \to \Simp^{q - 1} .
\end{equation}
Taking the coproducts on $J_0$ and on $J_1$ we obtain
the maps of simplicial sets
\begin{equation} \label{eqn:172}
\be, \ga : \coprod_{(k, l) \in J_1} \Simp^{q - 2} \to
\coprod_{j \in J_0} \Simp^{q - 1} .
\end{equation}

\begin{prop} \label{prop:170}
The following is a coequalizer sequence of simplicial sets:
\[ \begin{tikzcd}
\displaystyle\coprod_{(k, l) \in J_1}
\Simp^{q - 2} \mspace{10mu}
\arrow[r, shift left = 5pt, "{\be}"]
\arrow[r, shift right = 5pt, swap, "{\ga}"]
&
\mspace{10mu}
\displaystyle\coprod_{j \in J_0} \Simp^{q - 1}
\arrow[r, "{\al}"]
\mspace{10mu}
&
\mspace{10mu}
\bsym{\La}^{q}_{i}
\end{tikzcd} \]
\end{prop}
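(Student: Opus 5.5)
Colimits in $\cat{SSet}$ are computed levelwise, so I will prove that for every $p \in \N$ the diagram of sets
\[ \coprod_{(k, l) \in J_1} \Simp^{q-2}_p \rightrightarrows \coprod_{j \in J_0} \Simp^{q-1}_p \xar{\al} (\bsym{\La}^{q}_{i})_p \]
is a coequalizer in $\cat{Set}$. Naturality in $[p]$ is automatic, since $\al$, $\be$, $\ga$ are maps of simplicial sets. First I check that $\al \circ \be = \al \circ \ga$. On the $(k,l)$ summand this is the identity $\al_k \circ \be_{k,l} = \al_{k,l} = \al_l \circ \ga_{k,l}$, which holds by the definition of $\be_{k,l}$ and $\ga_{k,l}$. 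So $\al$ induces a map $\bar{\al}$ from the set-theoretic coequalizer $Q_p$ to $(\bsym{\La}^{q}_{i})_p$, and it remains to show that $\bar{\al}$ is bijective.

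\emph{Surjectivity.} Let $\si : [p] \to [q]$ lie in the horn. Then $\si([p]) \cup \{i\} \neq [q]$, so some $j \neq i$, that is $j \in J_0$, is missing from $\si([p])$. Hence $\si$ factors as $\si = \pa^j \circ \tau$ for a unique $\tau : [p] \to [q-1]$, and $\si = \al(j, \tau)$.

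\emph{Injectivity.} Suppose $(j, \tau)$ and $(j', \tau')$ satisfy $\pa^j \circ \tau = \pa^{j'} \circ \tau' = \si$. If $j = j'$, then $\tau = \tau'$ because $\pa^j$ is injective. If $j \neq j'$, say $k := \min(j, j') < l := \max(j, j')$, then $(k, l) \in J_1$ and neither $k$ nor $l$ lies in $\si([p])$. So $\si$ factors uniquely as $\si = \al_{k,l} \circ \rho$ with $\rho : [p] \to [q-2]$. Then
\[ \al_k \circ (\be_{k,l} \circ \rho) = \si = \al_l \circ (\ga_{k,l} \circ \rho) , \]
and injectivity of $\al_k$ and $\al_l$ gives $\tau_k = \be_{k,l} \circ \rho$ and $\tau_l = \ga_{k,l} \circ \rho$, where $\tau_k, \tau_l$ denote the elements among $\tau, \tau'$ sitting over $k$ and $l$. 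Thus $(k, \tau_k) = \be(\rho)$ and $(l, \tau_l) = \ga(\rho)$ for the element $\rho$ in the $(k,l)$ summand. So the two elements are identified by a single generating relation of the coequalizer, and in particular they are equal in $Q_p$. (In fact the fibers of $\al$ are then exactly the equivalence classes, without needing transitive closure.)

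The only delicate point is the bookkeeping in the injectivity step. One must check that the factorization of $\si$ through $\al_{k,l}$ is compatible with the identities $\be_{k,l} = \pa^l$ and $\ga_{k,l} = \pa^k$ stated earlier. This follows from uniqueness of factorizations through injections: any $\si$ whose image avoids a subset $S \subset [q]$ factors uniquely through the order-preserving injection with image $[q] - S$.
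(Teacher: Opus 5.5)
Your proof is correct and complete. The paper gives no argument of its own here; it only refers to part of the proof of Proposition 1.2.4.7 in Lurie's Kerodon. So your levelwise verification is a self-contained replacement for that citation. Surjectivity holds because a horn simplex misses some $j \neq i$. Injectivity holds because two distinct faces containing $\si$ force $\si$ to miss both $k$ and $l$, hence to factor through $\al_{k,l}$.

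Your route also gives more than the bare statement. Your parenthetical remark says that two distinct elements with the same image under $\al$ are related by a \emph{single} generating pair $(\be(\rho), \ga(\rho))$. The paper uses exactly this in its proof of Proposition \ref{prop:171}, where it says ``by Proposition \ref{prop:170} there exist $(k,l) \in J_1$ and $z$ with $\be_{(k,l)}(z) = x$ and $\ga_{(k,l)}(z) = y$''. That does not follow from the coequalizer property alone, which only yields a zigzag of such relations. Your argument also restricts verbatim to nondegenerate simplices, since $\si$ injective forces $\tau$ and $\rho$ to be injective. So it proves Proposition \ref{prop:171} directly as well.

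Your last paragraph is unnecessary, and the identity it mentions is off by one in the paper. For $k < l$, the map $\pa^k$ sends $l-1$ to $l$, so $\be_{k,l} = \pa^{l-1}$, not $\pa^l$. This is the cosimplicial identity $\pa^l \circ \pa^k = \pa^k \circ \pa^{l-1}$. Your proof never uses explicit formulas for $\be_{k,l}$ and $\ga_{k,l}$, only the defining relations $\al_k \circ \be_{k,l} = \al_{k,l} = \al_l \circ \ga_{k,l}$. So there is nothing to check there.
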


\begin{proof}
This, with slightly different notation,  is part of the proof of
\cite[Proposition 1.2.4.7, tag =
\href{https://kerodon.net/tag/050F}{\tt 050F}]{Lu2},
\end{proof}

The subcategory of $\Simp$ on all objects, but with only the injective functions
$\si : [p] \to [q]$ as morphisms, is denoted by
$\Simp_{\mrm{inj}}$.
A functor $X : \Simp^{\mrm{op}}_{\mrm{inj}} \to \cat{Set}$
is called a {\em semi-simplicial set}.
The category of semi-simplicial sets is
$\cat{S}_{\mrm{inj}} \mspace{-2mu} \cat{Set}
= \cat{Fun}(\Simp^{\mrm{op}}_{\mrm{inj}}, \cat{Set})$.

Let $X = \{ X_{p} \}_{p \in \N}$ be a simplicial set.
An element $x \in X_p$ is called {\em nondegenerate} if $x$ is not in the union
of the images of the degeneracy maps
$\opn{s}_j : X_{p - 1} \to X_p$.
Let us denote by $X^{\mrm{nd}}$ the set of nondegenerate elements of $X$.

If $X$ is a simplicial subset of $\Simp^q$ for some $q$, then
$x \in X_p$ is nondegenerate iff as a sequence
$x = (i_0, \ldots, i_p)$ in $[q]$, $x$ has no repetitions, i.e.\
$i_0 < \cdots < i_p$. Or equivalently, as a function
$x : [p] \to [q]$, $x$ is injective.
We can view $X^{\mrm{nd}}$ as a graded set.
For every injective function $\si : [r] \to [p]$ in $\Simp$, the function
$\si(x) = x \circ \si : [r] \to [q]$ is also injective, which means that the
simplex $\si(x) \in X_{r}$ is nondegenerate. We see that
$\si(X^{\mrm{nd}}_p) \sub X^{\mrm{nd}}_{r}$.
This means that $X^{\mrm{nd}}$ is in fact a semi-simplicial set.

\begin{lem} \label{lem:175}
Suppose
$\phi : \Simp^q \to \Simp^r$ is injective map of simplicial sets.
Let $x \in \Simp^q_p$ and  $y := \phi(x) \in \Simp^r_p$. Then $x$ is
nondegenerate iff $y$ is nondegenerate.
\end{lem}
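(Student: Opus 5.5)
By Proposition \ref{prop:445}, the map $\phi$ corresponds to the simplex $\phi(\opn{id}_{[q]}) \in \Simp^r_q$, which is a nondecreasing function $f : [q] \to [r]$. The map is then given on $p$-simplices by postcomposition, $\phi(x) = f \circ x$ for $x : [p] \to [q]$. So the plan is to translate everything into statements about nondecreasing functions between finite ordinals, using the criterion recalled just before the lemma: a simplex of a simplicial subset of $\Simp^q$ is nondegenerate iff, as a function $[p] \to [q]$, it is injective.

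The first step is to show that $f$ is injective. Suppose $f(a) = f(b)$ with $a < b$. Consider the two $0$-simplices $a, b : [0] \to [q]$. They are distinct, but $\phi(a) = f \circ a = f \circ b = \phi(b)$. This contradicts the injectivity of $\phi$ on $\Simp^q_0$. Hence $f$ is injective; in fact, since it is nondecreasing, it is strictly increasing.

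The second step is the equivalence itself. Since $f$ is injective, $f \circ x$ is injective iff $x$ is injective. One direction holds because a composition of injections is injective. The other holds because if $f \circ x$ is injective, then $x$ must be injective. By the criterion above, this says that $x$ is nondegenerate iff $y = \phi(x)$ is nondegenerate.

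There is no real obstacle here. The only point needing care is the first step: deducing injectivity of $f$ from injectivity of the map of simplicial sets $\phi$. Testing on $0$-simplices settles this directly.
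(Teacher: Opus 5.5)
Your proposal is correct and follows essentially the same route as the paper. The paper likewise deduces injectivity of the underlying function $\phi_0 : [q] \to [r]$ (your $f$) from injectivity of $\phi$ on $0$-simplices, writes $y = \phi_0 \circ x$, and concludes that $y$ is injective iff $x$ is.
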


\begin{proof}
Since $\Simp^q_0 \cong [q]$ and $\Simp^r_0 \cong [r]$, the map of simplicial
sets $\phi : \Simp^q \to \Simp^r$ is injective iff the corresponding function
$\phi_0 : [q] \to [r]$ in $\Simp$ is injective.
So we know that $\phi_0$ is injective.
As a sequence, or as a function $[p] \to [r]$ in $\Simp$, there is equality
$y = \phi_0 \circ x$.
We see that $y$ is injective iff $x$ is injective.
\end{proof}

By Lemma \ref{lem:175}, the maps of simplicial sets $\al_j$ in
(\ref{eqn:193}) satisfy
$\al_j((\Simp^{q - 1})^{\mrm{nd}}) \sub
(\bsym{\La}^{q}_{i})^{\mrm{nd}}$.
So there is a map
\begin{equation} \label{eqn:238}
\al : \coprod_{j \in J_0} (\Simp^{q - 1})^{\mrm{nd}} \to
(\bsym{\La}^{q}_{i})^{\mrm{nd}} .
\end{equation}

Likewise, the maps of simplicial sets $\be_{k, l}$ and $\ga_{k, l}$
in (\ref{prop:170}) satisfy
$\be_{k, l}((\Simp^{q - 2})^{\mrm{nd}}) \sub (\Simp^{q - 1})^{\mrm{nd}}$
and
$\ga_{k, l}((\Simp^{q - 2})^{\mrm{nd}}) \sub (\Simp^{q - 1})^{\mrm{nd}}$.
Taking coproducts we get
\begin{equation} \label{eqn:237}
\be, \ga : \coprod_{(k, l) \in J_1} (\Simp^{q - 2})^{\mrm{nd}} \to
\coprod_{j \in J_0} (\Simp^{q - 1})^{\mrm{nd}} .
\end{equation}

\begin{prop} \label{prop:171}
The following is a coequalizer sequence of semi-simplicial sets:
\[ \begin{tikzcd}
\displaystyle\coprod_{(k, l) \in J_1}
(\Simp^{q - 2})^{\mrm{nd}} \mspace{10mu}
\arrow[r, shift left = 5pt, "{\be}"]
\arrow[r, shift right = 5pt, swap, "{\ga}"]
&
\mspace{10mu}
\displaystyle\coprod_{j \in J_0} (\Simp^{q - 1})^{\mrm{nd}}
\arrow[r, "{\al}"]
\mspace{10mu}
&
\mspace{10mu}
(\bsym{\La}^{q}_{i})^{\mrm{nd}}
\end{tikzcd} \]
\end{prop}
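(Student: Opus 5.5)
Colimits in the functor category $\cat{S}_{\mrm{inj}} \mspace{-2mu} \cat{Set}$ are computed levelwise, so it suffices to show that for every $p \in \N$ the diagram of sets obtained by taking $p$-simplices is a coequalizer in $\cat{Set}$. Concretely, we need two facts: (a) the map $\al$ of (\ref{eqn:238}) is surjective in each degree $p$, and (b) two elements of $\coprod_{j \in J_0} (\Simp^{q - 1})^{\mrm{nd}}_p$ have the same image under $\al$ iff they are related by the equivalence relation generated by the pairs $(\be(z), \ga(z))$.

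We can get both facts by restricting the levelwise coequalizer of Proposition \ref{prop:170} to nondegenerate simplices. Surjectivity (a): let $x \in (\bsym{\La}^{q}_{i})^{\mrm{nd}}_p$. By Proposition \ref{prop:170}, $\al$ is surjective on $p$-simplices, so $x = \al_j(x')$ for some $j \in J_0$ and $x' \in \Simp^{q-1}_p$. Since $\al_j$ is injective, Lemma \ref{lem:175} says $x'$ is nondegenerate. For (b), I would argue directly rather than chasing the generated equivalence relation through degenerate simplices. A nondegenerate $x$ is an injective function $[p] \to [q]$, i.e.\ a subset $S = x([p])$ with $S \cup \{i\} \neq [q]$. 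Its preimages under $\al$ are the pairs $(j, x')$ with $j \in J_0 - S$; for each such $j$ the element $x'$ is uniquely determined, because $\al_j$ is injective. Two such preimages $(k, x'_k)$ and $(l, x'_l)$ with $k < l$ both lie in $J_0 - S$. Hence $x$ lies in the image of $\al_{k,l}$, say $x = \al_{k,l}(z)$ with $z \in \Simp^{q-2}_p$. By Lemma \ref{lem:175}, $z$ is nondegenerate. Then $\be_{k,l}(z) = x'_k$ and $\ga_{k,l}(z) = x'_l$, by uniqueness of preimages under the injective maps $\al_k$ and $\al_l$. So any two preimages are related by a single relation step coming from $J_1$. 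Conversely, $\al \circ \be = \al \circ \ga$ holds already on the simplicial level, since $\al_k \circ \be_{k,l} = \al_{k,l} = \al_l \circ \ga_{k,l}$.

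Combining these, for each $p$ the set $(\bsym{\La}^{q}_{i})^{\mrm{nd}}_p$ is the quotient of $\coprod_{j} (\Simp^{q-1})^{\mrm{nd}}_p$ by the relation generated by $\be, \ga$, and the quotient is compatible with the injective face operators. So the diagram is a coequalizer in $\cat{S}_{\mrm{inj}} \mspace{-2mu} \cat{Set}$.

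The only delicate point is (b): the equivalence relation must be generated using nondegenerate simplices alone. We cannot simply quote Proposition \ref{prop:170}, because there the chain of relations might a priori pass through degenerate simplices of $\Simp^{q-2}$. The direct subset description above avoids this, since it shows that any two preimages are connected in a single step.
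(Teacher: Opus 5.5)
Your proof is correct, and the surjectivity half is exactly the paper's argument. The difference is in exactness at the middle term. The paper does not analyse the fibres of $\al$ through the image set $S = x([p])$. It simply quotes Proposition \ref{prop:170} to obtain a pair $(k,l) \in J_1$ and $z \in \Simp^{q-2}$ with $\be_{(k,l)}(z) = x$ and $\ga_{(k,l)}(z) = y$. It then applies Lemma \ref{lem:175} to the injective map $\be_{(k,l)}$ to conclude that $z$ is nondegenerate. Read literally, this treats the coequalizer of Proposition \ref{prop:170} as having a one-step kernel relation, and it glosses over the cases where $x, y$ lie in the same summand or in the order opposite to $k < l$. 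Your argument, using two vertices $k < l$ of $J_0$ missing from $S$, proves exactly this one-step description directly. So your proof is self-contained and does not depend on the internals of the cited Kerodon argument.

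However, your stated reason for not quoting Proposition \ref{prop:170} is overstated. Since $\al \circ \be = \al \circ \ga$, every element of a connecting chain, and every $z$ used in it, lies over the same nondegenerate simplex $\al(x)$. Lemma \ref{lem:175}, applied to the injective maps $\al_j$ and $\al_{k,l}$, then shows that the whole chain consists of nondegenerate simplices. Thus the paper's route, read with the generated equivalence relation plus this one extra remark, is also complete. What your route buys is the sharper statement that any two preimages are related in a single step.
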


\begin{proof}
Exactness at the last term means that
$\al : J_0 \times (\Simp^{q - 1})^{\mrm{nd}} \to
(\bsym{\La}^{q}_{i})^{\mrm{nd}}$
is surjective.
Take some $x \in (\bsym{\La}^{q}_{i})^{\mrm{nd}}$.
By Proposition \ref{prop:170} there is some $j \in J_0$ and
$y \in \Simp^{q - 1}$ such that $\al_j(y) = x$.
Because the map of simplicial sets
$\al_j : \Simp^{q - 1} \to  \Simp^{q}$ is injective, Lemma \ref{lem:175} says
that $y \in (\Simp^{q - 1})^{\mrm{nd}}$.
This settles exactness at the last term.

Exactness at the middle term means that if
$x, y \in J_0 \times (\Simp^{q - 1})^{\mrm{nd}}$ satisfy
$\al(x) = \al(y)$, then there is some
$z \in J_1 \times (\Simp^{q - 2})^{\mrm{nd}}$
such that
$\be(z) = x$ and $\ga(z) = y$.
By Proposition \ref{prop:170} there exist
$(k, l) \in J_1$ and $z \in \Simp^{q - 2}$
such that
$\be_{(k, l)}(z) = x$ and $\ga_{(k, l)}(z) = y$.
Because the map of simplicial sets
$\be_{(k, l)} : \Simp^{q - 2} \to  \Simp^{q - 1}$ is injective, Lemma
\ref{lem:175} says that $z \in (\Simp^{q - 2})^{\mrm{nd}}$.
This settles exactness at the middle term.
\end{proof}

For some calculations it will be more convenient to use colimits instead of
coequalizers.
Let us define the set $J := J_0 \coprod J_1$.
We make $J$ into a quiver, with arrows
$(k, l) \to k$ and $(k, l) \to l$.
Then we define the collection of simplicial sets
$X := \{ X_m \}_{m \in J}$ to be
$X_j := \Simp^{q - 1}$ for $j \in J_0$ and
$X_{(k, l)} := \Simp^{q - 2}$ for $(k, l) \in J_1$.
So we have maps of simplicial sets
$\be_{(k, l)} : X_{(k, l)} \to X_{k}$ and
$\ga_{(k, l)} : X_{(k, l)} \to X_{l}$.
In this way we make $X$ into a diagram
$X : J \to \cat{SSet}$.

\begin{cor} \label{cor:150}
The map $\al$ from \tup{(\ref{eqn:171})}
gives rise to an isomorphism
$\al : \colim{m \in J} X_m \iso \bsym{\La}^{q}_{i}$
in $\cat{SSet}$.
\end{cor}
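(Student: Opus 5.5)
The plan is to derive Corollary \ref{cor:150} directly from Proposition \ref{prop:170}, using the general fact that a coequalizer of two parallel maps between coproducts is the colimit of the associated "bipartite" diagram. Colimits in $\cat{SSet}$ are computed levelwise in $\cat{Set}$, and both sides are functorial. It therefore suffices to give a natural bijection between cocones over the diagram $X : J \to \cat{SSet}$ and cocones over the coequalizer diagram of Proposition \ref{prop:170}.

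First, I would unwind what a cocone over $X$ with vertex $Y$ is. It is a family of maps $\phi_j : \Simp^{q-1} \to Y$ for $j \in J_0$, together with maps $\psi_{(k,l)} : \Simp^{q-2} \to Y$ for $(k,l) \in J_1$, such that
\[ \phi_k \circ \be_{(k,l)} = \psi_{(k,l)} = \phi_l \circ \ga_{(k,l)} \]
for every $(k,l) \in J_1$. The maps $\psi_{(k,l)}$ are determined by the $\phi_j$, so such a cocone is the same thing as a family $\{\phi_j\}_{j \in J_0}$ satisfying $\phi_k \circ \be_{(k,l)} = \phi_l \circ \ga_{(k,l)}$. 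By the universal property of coproducts, this family is the same as a single map $\phi : \coprod_{j \in J_0} \Simp^{q-1} \to Y$ with $\phi \circ \be = \phi \circ \ga$, where $\be, \ga$ are as in (\ref{eqn:172}).

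Second, I would invoke Proposition \ref{prop:170}. It says that such maps $\phi$ correspond bijectively, via precomposition with $\al$, to maps $\bsym{\La}^{q}_{i} \to Y$. Under this correspondence, the cocone on $X$ induced by $\al$ has components $\al_j$ on $X_j$ and $\al_{k,l} = \al_k \circ \be_{k,l} = \al_l \circ \ga_{k,l}$ on $X_{(k,l)}$. Hence $\bsym{\La}^{q}_{i}$, with this cocone, satisfies the universal property of $\colim{m \in J} X_m$, and the canonical comparison map is the one induced by $\al$. That map is therefore an isomorphism.

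There is no real obstacle here. The only point needing care is bookkeeping: I must check that the arrows $(k,l) \to k$ and $(k,l) \to l$ in the quiver $J$ are labeled by $\be_{(k,l)}$ and $\ga_{(k,l)}$ respectively, consistent with (\ref{eqn:172}). I must also confirm that "colimit over a quiver" means the colimit over the free category it generates. Since $J$ has no composable nonidentity arrows, that free category has no extra compatibility conditions to verify.
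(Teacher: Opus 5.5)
Your proposal is correct and matches the paper's approach: the paper simply says the corollary is a rephrasing of Proposition~\ref{prop:170}. Your identification of cocones over $X : J \to \cat{SSet}$ with maps out of $\coprod_{j \in J_0} \Simp^{q-1}$ that coequalize $\be$ and $\ga$ is exactly the unpacking of that remark, and it uses only the relations $\al_k \circ \be_{k,l} = \al_l \circ \ga_{k,l}$.
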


\begin{proof}
This is merely a rephrasing of Proposition \ref{prop:170}.
\end{proof}

We can do the same for the nondegenerate variant.
Let us define the collection of semi-simplicial sets
$Y := \{ Y_m \}_{m \in J}$ to be
$Y_j := (\Simp^{q - 1})^{\mrm{nd}}$
for $j \in J_0$ and
$Y_{(k, l)} := (\Simp^{q - 2})^{\mrm{nd}}$ for $(k, l) \in J_1$.
So we have maps of semi-simplicial sets
$\be_{(k, l)} : Y_{(k, l)} \to Y_{k}$ and
$\ga_{(k, l)} : Y_{(k, l)} \to Y_{l}$.
In this way we make $Y$ into a diagram
$Y : J \to \cat{S}_{\mrm{inj}} \mspace{-2mu} \cat{Set}$.

\begin{cor} \label{cor:190}
The map $\al$ from \tup{(\ref{eqn:238})} gives rise to an isomorphism
$\al : \colim{m \in J} Y_m \iso (\bsym{\La}^{q}_{i})^{\mrm{nd}}$
in $\cat{S}_{\mrm{inj}} \mspace{-2mu} \cat{Set}$.
\end{cor}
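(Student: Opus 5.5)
Corollary \ref{cor:190} will follow from Proposition \ref{prop:171} in the same way that Corollary \ref{cor:150} follows from Proposition \ref{prop:170}. The plan is to show that, for the quiver-shaped diagram $Y : J \to \cat{S}_{\mrm{inj}} \mspace{-2mu} \cat{Set}$, the colimit over $J$ is canonically the coequalizer of the pair $\be, \ga$ of (\ref{eqn:237}). Then Proposition \ref{prop:171} identifies that coequalizer with $(\bsym{\La}^{q}_{i})^{\mrm{nd}}$ via $\al$.

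The first step is to note that colimits in $\cat{S}_{\mrm{inj}} \mspace{-2mu} \cat{Set} = \cat{Fun}(\Simp^{\mrm{op}}_{\mrm{inj}}, \cat{Set})$ are computed levelwise. Hence it suffices to work with sets in each degree $p$. There, Proposition \ref{prop:171} says that $\al_p$ is surjective and that the fibers of $\al_p$ are exactly the classes of the relation generated by $\be(z) \sim \ga(z)$. I should check that the proof of Proposition \ref{prop:171} really gives this: if $\al(x) = \al(y)$ then $x$ and $y$ are related by a single $z$, or $x = y$. That proof used the statement for $\Simp$ coming from \cite{Lu2}, so the relation is already an equivalence relation, and it is enough to verify the universal property.

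The second step is the formal comparison. A cocone on $Y$ with vertex $T$ consists of maps $\phi_j : Y_j \to T$ for $j \in J_0$ and $\psi_{(k,l)} : Y_{(k,l)} \to T$ for $(k,l) \in J_1$. These must satisfy $\psi_{(k,l)} = \phi_k \circ \be_{(k,l)} = \phi_l \circ \ga_{(k,l)}$. So $\psi$ is determined by $\phi$, and the condition on $\phi = (\phi_j)$ is exactly $\phi \circ \be = \phi \circ \ga$ as maps out of $\coprod_{J_1} Y_{(k,l)}$. This gives a natural bijection between cocones on $Y$ and maps out of the coequalizer, so $\colim{m \in J} Y_m$ is that coequalizer. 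The induced map to $(\bsym{\La}^{q}_{i})^{\mrm{nd}}$ is the one induced by $\al$ of (\ref{eqn:238}), which is compatible because $\al_k \circ \be_{k,l} = \al_{k,l} = \al_l \circ \ga_{k,l}$.

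No real obstacle is expected. The only point needing care is that the exactness in Proposition \ref{prop:171} is the genuine coequalizer property in $\cat{S}_{\mrm{inj}} \mspace{-2mu} \cat{Set}$ rather than merely a statement about underlying graded sets. This holds because both the coequalizer and $(\bsym{\La}^{q}_{i})^{\mrm{nd}}$ are computed degreewise, and $\al$ is a map of semi-simplicial sets. The bijectivity of $\al$ in each degree then yields an isomorphism of functors on $\Simp^{\mrm{op}}_{\mrm{inj}}$.
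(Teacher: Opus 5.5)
Your proposal is correct and follows the paper's route. The paper proves Corollary \ref{cor:190} by noting that it is a rephrasing of Proposition \ref{prop:171}, and you supply that rephrasing: cocones on the quiver-shaped diagram $Y$ are the same as maps out of $\coprod_{j \in J_0} Y_j$ that equalize $\be$ and $\ga$, and colimits in $\cat{S}_{\mrm{inj}} \mspace{-2mu} \cat{Set}$ are computed degreewise.
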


\begin{proof}
This is a rephrasing of Proposition \ref{prop:171}.
\end{proof}

\section{Certain DG Rings Associated to Simplicial Sets}
\label{sec:certain-dg-rngs}

The coface, or coboundary, function
$\pa^j : [p] \to [p + 1]$ in $\Simp$ is the injective function that misses $j$.
As a sequence it is
\[ \pa^j  = (i_0, \ldots, i_p) = (0, \ldots, j - 1, j + 1, \ldots, p + 1) . \]
Here is a bit of nonstandard notation for some compositions of coboundary maps,
formulated in analogy to the notation $\pa^j$.
Let $\pa^{> p} : [p] \to [p + q]$ be the injective function in $\Simp$
that misses the subset $\{ i \in [p + q] \mid i > p\}$.
As a sequence it is
$\pa^{> p} = (0, \ldots, p)$.
Similarly let
$\pa^{< p} : [q] \to [p + q]$ by the function described by the sequence
$(p, \ldots, p + q)$.
For a simplicial object
$C = \{ C_p \}_{p \in \N}$
there are corresponding morphisms
$C(\pa^{> p}) : C_{p + q} \to C_p$ and
$C(\pa^{< p}) : C_{p + q} \to C_q$,
which we usually abbreviate to $\pa_{> p}$ and $\pa_{< p}$.
For a cosimplicial object
$D = \{ D^{p} \}_{p \in \N}$
there are corresponding morphisms
$\pa^{> p} : D^{p} \to D^{p + q}$ and
$\pa^{< p} : D^q \to D^{p + q}$.

Suppose $C = \{ C^p \}_{p \in \N}$ is a cosimplicial ring.
There are  coboundary operators
$\pa^i : C^p \to C^{p + 1}$
and codegeneracy operators
$\opn{s}^i : C^p \to C^{p - 1}$,
which are ring homomorphisms.
The normalization of $C$ is the DG ring
\begin{equation} \label{eqn:205}
\opn{N}(C) = \boplus_{p \in \N} \opn{N}^p(C) ,
\end{equation}
where
\begin{equation} \label{eqn:206}
\opn{N}^p(C) := \bigcap_i \opn{Ker}(\opn{s}^i) \sub C^p .
\end{equation}
The differential of $\opn{N}(C)$ is
\begin{equation} \label{eqn:207}
\d := \sum_i (-1)^i \cd \pa^i : \opn{N}^p(C) \to \opn{N}^{p + 1}(C) .
\end{equation}
The  Alexander-Whitney multiplication of
$b \in \opn{N}^p(C)$ and $c \in  \opn{N}^q(C)$ is
\begin{equation} \label{eqn:210}
b * c := \pa^{> p}(b) \cdot \pa^{< p}(c) \in C^{p + q} ,
\end{equation}
where $\cdot$ is the multiplication of the ring $C^{p + q}$.
Note then even if $C$ is a commutative cosimplicial ring, the DG ring
$\opn{N}(C)$ is almost never commutative.

Given a set $X$, there is the commutative ring
$\opn{Hom}_{\cat{Set}}(X, \Z)$.
Thus, from a simplicial set $X = \{ X_{p} \}_{p \in \N}$ we get the
cosimplicial ring
\begin{equation} \label{eqn:195}
\opn{Hom}_{\cat{Set}}(X, \Z) =
\{ \opn{Hom}_{\cat{Set}}(X_p, \Z) \}_{p \in \N} .
\end{equation}
The normalization of the cosimplicial DG ring
in (\ref{eqn:195}) is the DG ring
\begin{equation} \label{eqn:150}
\opn{N}(X, \Z) := \opn{N} \bigl(  \opn{Hom}_{\cat{Set}}(X, \Z) \bigr) .
\end{equation}

\begin{dfn} \label{dfn:195}
Let $X$ be a simplicial set and let $B$ be a DG ring. The
DG ring associated to $X$ and $B$ is the DG ring
\[ \opn{N}(X, B) := \opn{N}(X, \Z) \ot_{\Z} B . \]
\end{dfn}

This construction gives rise to a functor
\[ \cat{SSet}^{\mrm{op}} \times \cat{DGRng} \to \cat{DGRng} , \quad
(X, B) \mapsto  \opn{N}(X, B) . \]

Recall that $\Simp_{\mrm{inj}}$ is the subcategory of $\Simp$ on all the
objects, but the morphisms are only the injective (i.e.\ strictly increasing)
functions $\al : [p] \to  [q]$.
A semi-simplicial set is a functor
$Y : \Simp_{\mrm{inj}}^{\mrm{op}} \to \cat{Set}$.

Let $Y = \{ Y_{p} \}_{p \in \N}$ be a semi-simplicial set that is
dimension-wise finite, i.e.\ each $Y_p$ is a finite set.
For each $p$ we define the abelian group
\begin{equation} \label{eqn:208}
\opn{R}^p(Y, \Z) := \opn{Hom}_{\cat{Set}}(Y_p, \Z) .
\end{equation}
This is a free abelian group with basis consisting of the delta functions
$e_y$, $y \in Y_p$.
Taking the direct sum we obtain the graded abelian group
\begin{equation} \label{eqn:209}
\opn{R}(Y, \Z) = \bigoplus_{p \in \N} \opn{R}^p(Y, \Z) .
\end{equation}
For $y \in Y_p$ let
\[ Y^{+}_i(y) := \{ z \in Y_{p + 1} \mid \pa_i(z) = y \} . \]
Define the operator
\begin{equation} \label{eqn:201}
\d(e_y) := \sum_i (-1)^i \cd \sum_{z \in Y^{+}_i(y)} e_z .
\end{equation}
For $y \in Y_p$ and $z \in Y_q$ let
\[ Y^{+}(y, z) := \{ w \in Y_{p + q} \mid
\pa_{> p}(w) = y \ \text{and} \ \pa_{< p}(w) = z  \} . \]
Define a multiplication on $\opn{R}(Y, \Z)$ by
\begin{equation} \label{eqn:212}
e_y * e_z := \sum_{w \in Y^{+}(y, z)} e_w .
\end{equation}

\begin{prop} \label{prop:195}
Let $Y = \{ Y_{p} \}_{p \in \N}$ be a dimension-wise finite
semi-simplicial set. Then the graded abelian group
$\opn{R}(Y, \Z)$, with differential \tup{(\ref{eqn:201})} and multiplication
\tup{(\ref{eqn:212})}, is a DG ring.
\end{prop}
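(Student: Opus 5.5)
The plan is to recognize $\opn{R}(Y, \Z)$ as the linear dual of the normalized-free chain complex of $Y$ with its Alexander–Whitney diagonal. Then each DG ring axiom becomes the dual of a standard identity among semi-simplicial face operators. Let $\Z Y_p$ be the free abelian group on $Y_p$. Since each $Y_p$ is finite, $\opn{R}^p(Y, \Z) = \opn{Hom}_{\cat{Set}}(Y_p, \Z) = \opn{Hom}_{\Z}(\Z Y_p, \Z)$, and the delta functions $e_y$ form the dual basis. On chains I define the boundary
\[ \pa(w) := \sum_i (-1)^i \cd \pa_i(w) , \]
and the diagonal
\[ \Delta(w) := \sum_{p + q = n} \pa_{> p}(w) \ot \pa_{< p}(w) \]
for $w \in Y_n$. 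These only use injective maps in $\Simp$, so they make sense for a semi-simplicial set. Unwinding (\ref{eqn:201}) and (\ref{eqn:212}), we get $\d(e_y)(z) = e_y(\pa z)$ and $(e_y * e_z)(w) = (e_y \ot e_z)(\Delta w)$. So $\d$ is the transpose of $\pa$, and $*$ is the transpose of $\Delta$. Finiteness guarantees that all the sums are finite and that these transposes land in $\opn{R}(Y, \Z)$.

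The axioms then follow in this order. First, $\d \circ \d = 0$ is dual to $\pa \circ \pa = 0$. This follows from the semi-simplicial identities $\pa_i \pa_j = \pa_{j-1} \pa_i$ for $i < j$, which hold because they are identities among injective maps in $\Simp$. Second, associativity of $*$ is dual to coassociativity of $\Delta$. For $w \in Y_{p+q+r}$, both $(\Delta \ot 1)\Delta$ and $(1 \ot \Delta)\Delta$ produce
\[ \pa_{(0,\ldots,p)}(w) \ot \pa_{(p,\ldots,p+q)}(w) \ot \pa_{(p+q,\ldots,p+q+r)}(w) . \]
To check this it suffices to verify equalities of composites of injections in $\Simp$, such as $\pa^{>p+q} \circ \pa^{>p} = \pa^{>p}$ as maps $[p] \to [p+q+r]$, and similarly for the middle and back pieces. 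Third, the unit is $1 := \sum_{y \in Y_0} e_y$, which is a finite sum. Indeed $e_y * e_z$ with $y \in Y_0$ is nonzero only when $y$ is the initial vertex of $z$, so $1 * e_z = e_z$, and symmetrically $e_z * 1 = e_z$ by the final vertex. I also note $\d(1) = 0$, because each $z \in Y_1$ contributes $e_z$ once with sign $+$ (from $\pa_0$) and once with sign $-$ (from $\pa_1$).

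The main step, and the place where I expect the only real difficulty, is the graded Leibniz rule
\[ \d(e_y * e_z) = \d(e_y) * e_z + (-1)^p \cd e_y * \d(e_z) \]
for $y \in Y_p$. Dually, this says that $\Delta$ is a chain map: $\Delta \pa = (\pa \ot 1 + \epsilon \ot \pa)\Delta$, with $\epsilon = (-1)^{\text{front degree}}$. I will verify it by evaluating both sides on $w \in Y_{p+q+1}$ and sorting the faces $\pa_i w$. For $i < p+1$, the face $\pa_i$ commutes past $\pa^{<\cdot}$ and lands in the front factor, with sign $(-1)^i$. For $i > p$, it lands in the back factor, with index shifted by $p$, and so acquires the sign $(-1)^p \cd (-1)^{i-p}$.

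The remaining terms are the ``junction'' terms. These are the last face $\pa_{p+1}$ of a $(p+1)$-dimensional front piece, and the $0$-th face of a $(q+1)$-dimensional back piece. In the sum over all splittings they appear with opposite signs $(-1)^{p+1}$ and $(-1)^p \cd (-1)^0$. Both reduce to the same pair $(\pa_{>p} w, \pa_{<p+1} w)$, so they cancel. Getting this sign bookkeeping right, and matching it to the sign conventions in (\ref{eqn:201}), is the crux. Everything else is formal transposition.
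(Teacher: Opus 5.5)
Your proposal is correct and follows the same route as the paper: identify $\opn{R}(Y,\Z)$ with the degree-wise dual of the chain complex $\Z \ot_{\cat{Set}} Y$, under which (\ref{eqn:201}) and (\ref{eqn:212}) are the plain transposes of the boundary and of the Alexander--Whitney diagonal. The only difference is that you verify the coalgebra identities by hand instead of citing Mac Lane: $\pa \circ \pa = 0$, coassociativity, the unit $\sum_{y \in Y_0} e_y$, and the junction-term cancellation showing that $\Delta$ is a chain map. This also makes explicit the sign conventions that the paper leaves to ``a direct calculation''.
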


\begin{proof}
Let $\Z \ot_{\cat{Set}} Y_p$ be the free abelian group with basis
$\{ 1 \ot y \}_{y \in Y_p}$.
Consider the graded abelian group
\[ \Z \ot_{\cat{Set}} Y := \bigoplus_{p \in \N}
\Z \ot_{\cat{Set}} \lmsp Y_p . \]
It has the differential
\begin{equation} \label{eqn:215}
\d(1 \ot y) := \sum_i \lmsp (-1)^i \ot \pa^i(y)
\end{equation}
and the Alexander-Whitney comultiplication
\begin{equation} \label{eqn:216}
\opn{AW}(1 \ot y) :=
\sum_{i} \lmsp \bigl( 1 \ot \pa_{> i}(y) \bigr)
\ot \bigl( 1 \ot \pa_{< i}(y) \bigr) .
\end{equation}
These formulas make $\Z \ot_{\cat{Set}} Y$ into a DG coassociative coalgebra;
see \cite[Proposition VIII.8.7]{ML}.

There is a canonical isomorphism of graded abelian groups
\begin{equation} \label{eqn:217}
\opn{R}(Y, \Z) \cong \opn{Hom}_{\Z}
\bigl( \Z \ot_{\cat{Set}} Y, \Z \bigr) .
\end{equation}
A direct calculation shows that the differential (\ref{eqn:215}) and
comultiplication (\ref{eqn:216}) of
$\Z \ot_{\cat{Set}} Y$ go by this isomorphism to the differential
\tup{(\ref{eqn:201})} and the multiplication
\tup{(\ref{eqn:212})} of $\opn{R}(Y, \Z)$.
\end{proof}

\begin{dfn} \label{dfn:220}
Let $Y$ be a semi-simplicial set and let $B$ be a DG ring. The
DG ring associated to $Y$ and $B$ is
\[ \opn{R}(Y, B) := \opn{R}(Y, \Z) \ot_{\Z} B . \]
\end{dfn}

Note that since $Y$ is dimension-wise finite, there is a canonical isomorphism
of graded abelian groups
\begin{equation} \label{222}
\opn{R}(Y, B) \cong \opn{Hom}_{\cat{Set}}(Y, B) .
\end{equation}

Let  $X = \{ X_{p} \}_{p \in \N}$ be a dimension-wise finite simplicial set,
and let $Y := X^{\mrm{nd}}$, the semi-simplicial set of nondegenerate elements.
There is an inclusion of graded sets
$Y \sub X$, inducing a surjection of graded abelian groups
$\opn{Hom}_{\cat{Set}}(X, \Z) \to \opn{R}(Y, \Z)$,
see formula (\ref{eqn:208}). Also there is an inclusion of graded abelian
groups
$\opn{N}(X, \Z) \sub \opn{Hom}_{\cat{Set}}(X, \Z)$. These give rise to a
commutative diagram of graded abelian groups
\begin{equation} \label{eqn:214}
\begin{tikzcd} 
\opn{N}(X, \Z)
\ar[rr, bend left = 20, "{f}"]
\ar[r, tail, "{}"]
&
\opn{Hom}_{\cat{Set}}(X, \Z)
\ar[r, two heads, "{}"]
&
\opn{R}(Y, \Z)
\end{tikzcd}
\end{equation}

\begin{prop} \label{prop:210}
Let  $X = \{ X_{p} \}_{p \in \N}$ be a dimension-wise finite simplicial set,
and let $Y := X^{\mrm{nd}}$, the semi-simplicial set of nondegenerate elements.
Then the arrow $f$ in diagram \tup{(\ref{eqn:214})} is an isomorphism of
DG rings
\[ f : \opn{N}(X, \Z) \iso \opn{R}(Y, \Z) . \]
\end{prop}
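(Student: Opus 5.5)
The plan is to work entirely with the concrete description of both sides as $\Z$-valued functions, and to check that $f$, which is restriction of functions from $X$ to $Y = X^{\mrm{nd}}$, is bijective in each degree and respects the differential, the multiplication and the unit. Since $X$ is dimension-wise finite, $\opn{R}^p(Y, \Z) = \opn{Hom}_{\cat{Set}}(Y_p, \Z)$ is identified with functions on $Y_p$, the basis element $e_y$ being the delta function at $y$. Under this identification, $f(\phi) = \phi|_{Y_p}$ for $\phi \in \opn{N}^p(X, \Z)$.

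\emph{Step 1 (bijectivity).} The codegeneracy $\opn{s}^i : \opn{Hom}_{\cat{Set}}(X_p, \Z) \to \opn{Hom}_{\cat{Set}}(X_{p-1}, \Z)$ is $\phi \mapsto \phi \circ \opn{s}_i$. Hence $\opn{Ker}(\opn{s}^i)$ consists of the functions vanishing on $\opn{s}_i(X_{p-1})$. By (\ref{eqn:206}), $\opn{N}^p(X, \Z)$ is therefore exactly the group of functions $X_p \to \Z$ vanishing on all degenerate simplices. Such a function is determined by its restriction to $Y_p$, and every function on $Y_p$ extends by zero. So $f$ is an isomorphism of graded abelian groups, with inverse sending $e_y$ to the function $\til{e}_y$ that is the delta function at $y$ on $X_p$ (and in particular zero on degenerate simplices).

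\emph{Step 2 (differential).} I will compare both sides on the basis. For $y \in Y_p$ and $z \in Y_{p+1}$, formula (\ref{eqn:207}) gives
\[ \d(\til{e}_y)(z) = \sum_i (-1)^i \cd \til{e}_y(\pa_i(z)) . \]
The face $\pa_i(z)$ is either nondegenerate, in which case $\til{e}_y(\pa_i(z))$ is $1$ if $\pa_i(z) = y$ and $0$ otherwise, or degenerate, in which case $\til{e}_y$ vanishes on it; note that in the second case $\pa_i(z) \neq y$, since $y$ is nondegenerate. So in both cases $\til{e}_y(\pa_i(z))$ is $1$ exactly when $\pa_i(z) = y$. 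Thus $\d(\til{e}_y)(z) = \sum_i (-1)^i \cd [\pa_i(z) = y]$, which is precisely the value at $z$ of (\ref{eqn:201}). Hence $f \circ \d = \d \circ f$. Here I use that $\opn{N}(X, \Z)$ is closed under $\d$, which is part of the standard normalization construction.

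\emph{Step 3 (multiplication and unit).} For $y \in Y_p$, $z \in Y_q$ and $w \in Y_{p+q}$, formula (\ref{eqn:210}) gives
\[ (\til{e}_y * \til{e}_z)(w) = \til{e}_y(\pa_{> p}(w)) \cd \til{e}_z(\pa_{< p}(w)) . \]
If we regard $w$ as an injective map $[p+q] \to [n]$ (in the case $X \subseteq \Simp^n$), or more generally note that the front and back faces of a nondegenerate simplex need not be nondegenerate, we again use that $\til{e}_y$ vanishes off $y$. So the product equals $1$ iff $\pa_{>p}(w) = y$ and $\pa_{<p}(w) = z$, i.e.\ iff $w \in Y^+(y,z)$, matching (\ref{eqn:212}). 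For the unit, $X_0$ has no degenerate elements, so $Y_0 = X_0$. The unit of $\opn{N}(X,\Z)$ is the constant function $1$ on $X_0$, which maps to $\sum_{y \in Y_0} e_y$. I expect to check directly, or deduce from the fact that $f$ is a bijective multiplicative map, that this is the unit of $\opn{R}(Y,\Z)$.

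The main obstacle, such as it is, is bookkeeping: making sure the sign and face conventions in (\ref{eqn:201}) and (\ref{eqn:212}) agree with those in (\ref{eqn:207}) and (\ref{eqn:210}), and correctly handling degenerate faces. The latter is exactly where the vanishing of normalized cochains on degenerate simplices is used.
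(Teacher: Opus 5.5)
Your proposal is correct and takes the same approach as the paper. You identify $\opn{N}^p(X,\Z)$ with the cochains vanishing on degenerate simplices, so that $f$ is the identity on the delta-function bases, and then compare the differential and the Alexander--Whitney product on basis elements. You also carry out the ``elementary but somewhat messy calculation'' that the paper only asserts, and it holds even when faces of nondegenerate simplices are degenerate, because it uses only that $\til{e}_y$ vanishes off $y$.
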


\begin{proof}
The abelian subgroup
$\opn{N}^p(X, \Z)$ is free with basis the delta functions
$\{ e_{x} \}_{x \in X^{\mrm{nd}}_{p}}$.
The homomorphism $f$ is the identity on the delta functions
$e_x$, $x \in X^{\mrm{nd}}_p  = Y_p$.
Therefore $f$ is an isomorphism of graded abelian groups.
An elementary but  somewhat messy calculation shows that $f$ sends the
differential
(\ref{eqn:207}) to the differential (\ref{eqn:201}), and the AW multiplication
(\ref{eqn:210}) to the AW multiplication (\ref{eqn:212}).
\end{proof}

\begin{cor} \label{cor:240}
In the setting of Proposition \tup{\ref{prop:210}}, given a DG ring $B$,
the arrow $f$ in diagram \tup{(\ref{eqn:214})} induces an isomorphism of
DG rings
\[ f : \opn{N}(X, B) \iso \opn{R}(Y, B) . \]
\end{cor}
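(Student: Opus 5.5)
The plan is to reduce Corollary \ref{cor:240} to Proposition \ref{prop:210} by base change along $\Z \to B$. By Definition \ref{dfn:195} we have $\opn{N}(X, B) = \opn{N}(X, \Z) \ot_{\Z} B$, and by Definition \ref{dfn:220} we have $\opn{R}(Y, B) = \opn{R}(Y, \Z) \ot_{\Z} B$. We take the arrow induced by $f$ to be $f \ot \opn{id}_B$. It therefore suffices to prove the following general fact: if $\phi : C \to C'$ is an isomorphism of DG rings and $B$ is a DG ring, then $\phi \ot \opn{id}_B : C \ot_{\Z} B \to C' \ot_{\Z} B$ is an isomorphism of DG rings.

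First, $- \ot_{\Z} B$ is a functor from DG rings to DG rings. The tensor product DG ring has differential $\d(c \ot b) = \d(c) \ot b + (-1)^{i} c \ot \d(b)$ for $c$ of degree $i$. Its multiplication is $(c_1 \ot b_1)(c_2 \ot b_2) = (-1)^{j_1 i_2} c_1 c_2 \ot b_1 b_2$, where $b_1$ has degree $j_1$ and $c_2$ has degree $i_2$. A degree-$0$ map $\phi$ that commutes with differentials and multiplications makes $\phi \ot \opn{id}_B$ commute with both, by direct inspection of these formulas; the signs are unchanged because $\phi$ preserves degrees. Functoriality then gives the inverse $\phi^{-1} \ot \opn{id}_B$.

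Apply this with $\phi = f$ from Proposition \ref{prop:210}. Since $f$ is an isomorphism of DG rings, so is $f \ot \opn{id}_B : \opn{N}(X, B) \to \opn{R}(Y, B)$.

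There is no serious obstacle. The only point to check is that the arrow meant in the statement agrees with $f \ot \opn{id}_B$ under diagram (\ref{eqn:214}) tensored with $B$. Here $\opn{Hom}_{\cat{Set}}(X_p, \Z) \ot B$ is not the same as $\opn{Hom}_{\cat{Set}}(X_p, B)$ in general. However, the composite inclusion-then-restriction on $\opn{N}(X,\Z) \ot B$, which has a basis of delta functions of nondegenerate simplices tensored with $B$, is exactly $f \ot \opn{id}_B$. This holds because tensoring with $B$ preserves the splitting by the basis $\{e_x\}_{x \in X^{\mrm{nd}}_p}$, which is compatible with the identification $\opn{R}(Y, B) \cong \opn{Hom}_{\cat{Set}}(Y, B)$ in (\ref{222}).
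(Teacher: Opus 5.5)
Your proposal is correct and is essentially the paper's argument: the paper simply declares the corollary clear from Proposition \ref{prop:210} together with Definitions \ref{dfn:195} and \ref{dfn:220}, i.e.\ it takes the base change $f \ot \opn{id}_B$ of the DG ring isomorphism $f : \opn{N}(X, \Z) \iso \opn{R}(Y, \Z)$, which you have spelled out. Your final caveat is harmless but unnecessary: in this setting each $X_p$ is finite, so $\opn{Hom}_{\cat{Set}}(X_p, \Z) \ot_{\Z} B \cong \opn{Hom}_{\cat{Set}}(X_p, B)$ anyway.
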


\begin{proof}
Clear from Proposition \ref{prop:210} and Definitions \ref{dfn:220} and
\ref{dfn:195}.
\end{proof}

\begin{dfn} \label{dfn:215}
Let $B$ be a DG ring.
\begin{enumerate}
\item For every $q \in \N$, the {\em $q$-th cylinder DG ring associated to
$B$} is the DG ring
\[ \opn{Cyl}_{q}(B) := \opn{N}(\Simp^q , B) =
 \opn{N}(\Simp^q, \Z) \ot_{\Z} B . \]

\item The {\em simplicial cylinder DG ring associated to $B$}
is
\[ \opn{Cyl}_{}(B) := \{ \opn{Cyl}_{q}(B) \}_{q \in \N} . \]
\end{enumerate}
\end{dfn}

\begin{prop} \label{prop:215}
There is a canonical isomorphism of DG rings between Keller's cylinder DG ring
$\opn{Cyl}_{\mrm{Ke}}(B)$
from Definition \tup{\ref{dfn:332}} and the DG ring $\opn{Cyl}_{1}(B)$.
\end{prop}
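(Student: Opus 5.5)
We reduce to the integral case and use Proposition \ref{prop:210}. Since $\opn{Cyl}_1(B) = \opn{N}(\Simp^1, \Z) \ot_{\Z} B$ by Definition \ref{dfn:215}, and $\opn{Cyl}_{\mrm{Ke}}(B) = \opn{Cyl}_{\mrm{Ke}}(\Z) \ot_{\Z} B$ by Definition \ref{dfn:332}, it suffices to produce a canonical isomorphism of DG rings $\opn{Cyl}_{\mrm{Ke}}(\Z) \iso \opn{N}(\Simp^1, \Z)$ and then tensor it with $\opn{id}_B$. By Proposition \ref{prop:210}, applied to the dimension-wise finite simplicial set $X := \Simp^1$, we may replace $\opn{N}(\Simp^1, \Z)$ by $\opn{R}(Y, \Z)$, where $Y := (\Simp^1)^{\mrm{nd}}$. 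The latter has explicit formulas (\ref{eqn:201}) and (\ref{eqn:212}).

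Next we enumerate $Y$. The nondegenerate simplices of $\Simp^1$ are the injective sequences in $[1]$: the vertices $(0)$ and $(1)$ in dimension $0$, the edge $(0,1)$ in dimension $1$, and nothing above. So $\opn{R}(Y, \Z)$ is free of rank $3$, with basis $e_{(0)}, e_{(1)}$ in degree $0$ and $e_{(0,1)}$ in degree $1$. We compute the structure from the definitions.

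\begin{itemize}
\item Differential. We have $\pa_0(0,1) = (1)$ and $\pa_1(0,1) = (0)$. Formula (\ref{eqn:201}) then gives $\d(e_{(0)}) = -e_{(0,1)}$, $\d(e_{(1)}) = e_{(0,1)}$ and $\d(e_{(0,1)}) = 0$.
\item Products of vertices. In degree $0$, $Y^{+}((a),(b))$ is $\{(a)\}$ if $a = b$ and empty otherwise. Hence $e_{(0)}, e_{(1)}$ are orthogonal idempotents.
\item Products involving the edge. We have $e_{(0)} * e_{(0,1)} = e_{(0,1)}$, because $\pa_{>0}(0,1) = (0)$ and $\pa_{<0}(0,1) = (0,1)$. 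Similarly $e_{(0,1)} * e_{(1)} = e_{(0,1)}$, because $\pa_{>1}(0,1) = (0,1)$ and $\pa_{<1}(0,1) = (1)$. The products $e_{(1)} * e_{(0,1)}$ and $e_{(0,1)} * e_{(0)}$ vanish. So does $e_{(0,1)} * e_{(0,1)}$, since it would land in degree $2$, where $Y$ is empty.
\end{itemize}

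Comparing with the description after (\ref{eqn:235}), the assignment $e_0 \mapsto e_{(0)}$, $e_1 \mapsto e_{(1)}$, $e_{0,1} \mapsto e_{(0,1)}$ matches the matrix-unit multiplication table and the differential exactly. The unit $e_0 + e_1$ goes to $e_{(0)} + e_{(1)}$, which is the constant function $1$ on $Y_0$. So this assignment is an isomorphism of DG rings $\opn{Cyl}_{\mrm{Ke}}(\Z) \iso \opn{R}(Y,\Z) \cong \opn{N}(\Simp^1,\Z)$, and tensoring with $B$ finishes the proof.

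The only real care point is the index conventions for $\pa_{>p}$ and $\pa_{<p}$, i.e.\ checking that the orientation gives $e_{(0)} * e_{(0,1)} \neq 0$ rather than $e_{(0,1)} * e_{(0)} \neq 0$. This is what makes the edge correspond to the upper-right matrix entry. With the sequence conventions stated at the start of Section \ref{sec:certain-dg-rngs}, the computation above shows the orientation agrees with Keller's. One should also confirm that the signs of the differential agree with $\d(e_0) = -e_{0,1}$; they do, by the computation above.
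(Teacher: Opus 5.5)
Your proposal is correct and follows the paper's proof: reduce to $B = \Z$, pass to $\opn{R}((\Simp^1)^{\mrm{nd}}, \Z)$ via Proposition \ref{prop:210}, compute the differential and multiplication on the basis $e_{(0)}, e_{(1)}, e_{(0,1)}$, and match them with Keller's matrix units. Your explicit checks of the face maps $\pa_{>p}$, $\pa_{<p}$, the edge orientation and the unit are accurate; they are details the paper leaves implicit.
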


\begin{proof}
It is enough to consider the case $B = \Z$.

The structure of Keller's cylinder DG ring
$\opn{Cyl}_{\mrm{Ke}}(\Z) = \sbmat{\Z & \lmsp \Z[-1] \\ 0 & \Z}$
was recalled in Section \ref{sec:prelims-dg} above.
As a graded abelian group it is free, with basis
$e_0 = \sbmat{1 & \lmsp 0 \\ 0 & 0}$,
$e_1 = \sbmat{0 & \lmsp 0 \\ 0 & 1}$ and
$e_{0, 1} = \sbmat{0 & \lmsp y \\ 0 & 0}$.
The multiplication is that of matrices, and the differential is
$\d(e_0) = -e_{0, 1}$, $\d(e_1) = e_{0, 1}$ and
$\d(e_{0, 1}) = 0$.

The DG ring
$\opn{Cyl}_{1}(\Z) = \opn{N}(\Simp^q , \Z) \cong
\opn{R}((\Simp^q)^{\mrm{nd}} , \Z)$
has a graded basis $e_{(0)}$, $e_{(1)}$ and $e_{(0, 1)}$.
Formula (\ref{eqn:201}) is very easy in this case:
$\d(e_{(0)}) = - e_{(0, 1)}$,
$\d(e_{(1)}) = e_{(0, 1)}$, and
$\d(e_{(0, 1)}) = 0$.
And formula (\ref{eqn:212}) says that
$e_{(0)} * e_{(0)} = e_{(0)}$,
$e_{(1)} * e_{(1)} = e_{(1)}$,
$e_{(0)} * e_{(0, 1)} = e_{(0, 1)}$,
$e_{(0, 1)} * e_{(1)} = e_{(0, 1)}$,
and all other multiplications are zero.

We see that the graded abelian group isomorphism
$e_{0} \mapsto e_{(0)}$, $e_{1} \mapsto e_{(1)}$ and
$e_{0, 1} \mapsto e_{(0, 1)}$ is an isomorphism of DG rings.
\end{proof}

\section{The DG Ring Associated to a Horn}
\label{sec:dg-ring-horn}

In this section we prove the key technical result, Theorem \ref{thm:245} from
the Introduction, which talks about the role of the DG ring
$\opn{N}(\bsym{\La}^{q}_{i}, B)$ associated to a horn
$\bsym{\La}^{q}_{i} \sub \Simp^q$ and a DG ring $B$. It is repeated here as
Theorem \ref{thm:152}.

Consider a horn $\bsym{\La}^{q}_{i}$.
In Corollary \ref{cor:150} we expressed this horn as a colimit
\begin{equation} \label{eqn:220}
\bsym{\La}^{q}_{i} \cong \colim{j \in J} X_j ,
\end{equation}
in the category $\cat{SSet}$, of a diagram $\{ X_j \}_{j \in J}$,
indexed by a finite quiver $J$, where each
$X_j = \Simp^{q_j}$ for some natural number $q_j$.

Passing to nondegenerate simplices, let
$Y := (\bsym{\La}^{q}_{i})^{\mrm{nd}}$
and $Y_j := X_j^{\mrm{nd}}$.
These are semi-simplicial sets, i.e.\ objects of the category
$\cat{S}_{\mrm{inj}} \mspace{-2mu} \cat{Set}$.
In Corollary \ref{cor:190} we proved that (\ref{eqn:220}) induces a
canonical isomorphism
\begin{equation} \label{eqn:225}
Y \cong \colim{j \in J} Y_j
\end{equation}
in $\cat{S}_{\mrm{inj}} \mspace{-2mu} \cat{Set}$.

{\em The horn $\bsym{\La}^{q}_{i}$ and the direct systems
$\{ X_j \}_{j \in J}$ and $\{ Y_j \}_{j \in J}$ are retained
until the end of the proof of Theorem \tup{\ref{thm:152}}}.

To avoid confusion (e.g.\ in formulas (\ref{eqn:226}) and (\ref{eqn:223})
below), for a simplicial set
$Z = \{ Z_{p} \}_{p \in \N}$
we will often write $Z([p])$ instead of $Z_p$.
(This is actually the proper notation, since $Z$ is a functor
$\Simp^{\mrm{op}} \to \cat{Set}$~; $Z_p$ is just an abbreviation.)
Likewise for semi-simplicial sets.

Since $\{ Y_j \}_{j \in J}$ is a direct system of functors
$\Simp^{\mrm{op}}_{\mrm{inj}} \to \cat{Set}$,
the direct limit is calculated in the target category.
This means that in each simplicial dimension $p$ we have
\begin{equation} \label{eqn:226}
Y([p]) \cong \colim{j \in J} Y_j([p])
\end{equation}
in $\cat{Set}$.

Let $B$ be a DG ring.
By Definition \ref{dfn:215} we have
$\opn{Cyl}_{q_j}(B) = \opn{N}(\Simp^{q_j} , B)$.
Therefore the direct system of simplicial sets
$\{ \Simp^{q_j} \}_{j \in J} = \{ X_j \}_{j \in J}$
gives rise to an inverse system of DG rings
$\{ \opn{Cyl}_{q_j}(B) \}_{j \in J}$.

\begin{lem} \label{lem:152}
Let $B$ be any DG ring. Then there is a canonical isomorphism
\[ \opn{N}(\bsym{\La}^{q}_{i}, B) \cong
\underset{j \in J}{\opn{lim}} \, \opn{Cyl}_{q_j}(B) \]
in $\cat{DGRng}$.
\end{lem}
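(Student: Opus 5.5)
The approach is to reduce the statement to a level-wise claim about the semi-simplicial sets of nondegenerate simplices, using Corollary \ref{cor:240} and the colimit description (\ref{eqn:226}).

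First, every simplicial set in sight is dimension-wise finite: the horn $\bsym{\La}^{q}_{i}$ and each $X_j = \Simp^{q_j}$. So Corollary \ref{cor:240} gives isomorphisms of DG rings
\[ \opn{N}(\bsym{\La}^{q}_{i}, B) \cong \opn{R}(Y, B) \quad \text{and} \quad \opn{Cyl}_{q_j}(B) \cong \opn{R}(Y_j, B) . \]
These isomorphisms are compatible with the transition maps induced by the injective maps $\be_{(k,l)}, \ga_{(k,l)}, \al_j$. The reason is that, on delta functions, an injective map $\phi$ of simplicial subsets of a simplex sends nondegenerate simplices to nondegenerate ones (Lemma \ref{lem:175}). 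Consequently the pullback $\phi^*$ on $\opn{N}(-,\Z)$, restricted to nondegenerate simplices, is precisely the pullback of functions $\opn{R}(\phi^{\mrm{nd}},\Z)$. It therefore suffices to produce a canonical isomorphism $\opn{R}(Y, B) \cong \lim_{j} \opn{R}(Y_j, B)$ of DG rings, induced by the maps $\al_j$.

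Next, limits in $\cat{DGRng}$ are computed in graded abelian groups, degreewise, with the componentwise differential and multiplication. So we need two things: a degreewise bijection, and compatibility with the structure. For the bijection, use (\ref{222}), which gives $\opn{R}^p(Y, B) \cong \opn{Hom}_{\cat{Set}}(Y([p]), B^{\ast})$ in each degree. By (\ref{eqn:226}) we have $Y([p]) \cong \colim{j} Y_j([p])$ in $\cat{Set}$. Since $\opn{Hom}_{\cat{Set}}(-, M)$ turns colimits into limits, this yields
\[ \opn{Hom}_{\cat{Set}}(Y([p]), M) \cong \lim_j \opn{Hom}_{\cat{Set}}(Y_j([p]), M) \]
for each graded component $M$ of $B$. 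Summing over the total degree is harmless, because $J$ is finite and each $Y([p])$ is finite; so the graded pieces commute with the finite limit. This gives a bijective map of graded abelian groups
\[ \opn{R}(Y,B) \to \lim_j \opn{R}(Y_j,B) . \]
It is a DG ring homomorphism because each component $\al_j^* : \opn{R}(Y,B) \to \opn{R}(Y_j,B)$ is one, being the image under the functor $\opn{N}(-,B)$ of $\al_j$, transported through the isomorphisms above. A bijective DG ring homomorphism onto the limit is an isomorphism.

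The main obstacle is the compatibility step: that the identification $\opn{N}(X,\Z)\cong \opn{R}(X^{\mrm{nd}},\Z)$ is natural with respect to the injective maps in the diagram. This naturality fails for degenerate-creating maps, so I will check it directly on delta functions $e_x$. Pulling back $e_x$ along an injective $\phi$ gives a sum of $e_w$ over $w$ with $\phi(w) = x$. Such $w$ are nondegenerate when $x$ is, and when $x$ is degenerate there are no nondegenerate such $w$. This is exactly Lemma \ref{lem:175}. Everything else is formal.
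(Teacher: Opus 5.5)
Your proposal is correct and follows essentially the same route as the paper: replace $\opn{N}$ by $\opn{R}$ of the nondegenerate parts, forget down to graded abelian groups, use finiteness of $Y([p])$ to identify $\opn{Hom}_{\cat{Set}}(Y([p]), \Z) \ot_{\Z} B^k$ with $\opn{Hom}_{\cat{Set}}(Y([p]), B^k)$, and conclude from the levelwise colimit formula (\ref{eqn:226}). Your explicit check, via Lemma \ref{lem:175}, that the identification $\opn{N}(X, \Z) \cong \opn{R}(X^{\mrm{nd}}, \Z)$ is compatible with the injective transition maps is a step the paper uses without comment, so it supplements the paper's argument rather than departing from it.
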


\begin{proof}
For the sake of clarity let's write
$X := \bsym{\La}^{q}_{i}$,
$Y := X^{\mrm{nd}} = (\bsym{\La}^{q}_{i})^{\mrm{nd}}$
and $Y_j := X_j^{\mrm{nd}} = (\Simp^{q_j})^{\mrm{nd}}$.

By Proposition \ref{prop:210} we can replace
$\opn{N}(X, B)$ with $\opn{R}(Y, B)$, and
$\opn{Cyl}_{q_j}(B) = \opn{N}(X_j, B)$ with $\opn{R}(Y_j, B)$.
We now need to prove that the canonical homomorphism
\begin{equation} \label{eqn:224}
\opn{R}(Y, B) \to \underset{j \in J}{\opn{lim}} \, \opn{R}(Y_j, B)
\end{equation}
in $\cat{DGRng}$ is an isomorphism.
The forgetful functor from DG rings to graded abelian
groups commutes with taking limits.
Hence we can forget the DG ring structure,
and just look at equation (\ref{eqn:224}) in the category of graded
abelian groups.

By definition there are equalities
$\opn{R}(Y, B) = \opn{R}(Y, \Z) \ot_{\Z} B$,
$\opn{R}(Y, \Z) = \bigoplus_{p} \opn{R}^p(Y, \Z)$,
and
$\opn{R}^p(Y, \Z) = \opn{Hom}_{\cat{Set}}(Y([p]), \Z)$.
The DG ring $B$ decomposes, as a graded abelian group, into
$B = \bigoplus_{k} B^k$.
Because the set $Y([p])$ finite, there is a canonical isomorphism of
abelian groups
\[ \opn{Hom}_{\cat{Set}}(Y([p]), \Z) \ot_{\Z} B^k \cong
\opn{Hom}_{\cat{Set}}(Y([p]), B^k) . \]
All this holds for the $Y_j$ too. Therefore we can replace
(\ref{eqn:224}) with the homomorphism
\begin{equation} \label{eqn:223}
\opn{Hom}_{\cat{Set}}(Y([p]), B^k) \to
\underset{j \in J}{\opn{lim}}  \opn{Hom}_{\cat{Set}}(Y_j([p]), B^k) ,
\end{equation}
where limit is in $\cat{Ab}$. But this limit coincides, on underlying sets, with
the limit in $\cat{Set}$. And in $\cat{Set}$ the map (\ref{eqn:223})
is an isomorphism by formula (\ref{eqn:226}).
\end{proof}

\begin{lem} \label{lem:154}
Let $A$ and  $B$ be DG rings. Then for any $j \in J$ there is canonical
isomorphism
\[ \opn{Hom}_{\cat{SSet}}
\bigl( X_j, \opn{Hom}_{\cat{DGRng}}(A, \opn{Cyl}_{}(B)) \bigr) \cong
\opn{Hom}_{\cat{DGRng}}(A, \opn{Cyl}_{q_j}(B)) \]
in $\cat{Set}$.
\end{lem}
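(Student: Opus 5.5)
Since $X_j = \Simp^{q_j}$ for some natural number $q_j$, the statement is an instance of the Yoneda lemma, Proposition \ref{prop:445}. The plan is to apply it to the simplicial set
\[ Z := \opn{Hom}_{\cat{DGRng}}(A, \opn{Cyl}_{}(B)) = \opn{SHom}(A, B) , \]
whose set of $p$-simplices is $Z_p = \opn{Hom}_{\cat{DGRng}}(A, \opn{Cyl}_{p}(B))$.

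The first step is to confirm that $Z$ really is a simplicial set. The cosimplicial structure on $\{ \Simp^p \}_{p \in \N}$, together with the contravariant functoriality of $X \mapsto \opn{N}(X, B)$ in Definition \ref{dfn:195}, makes $\opn{Cyl}(B) = \{ \opn{N}(\Simp^p, B) \}_{p \in \N}$ a simplicial object in $\cat{DGRng}$. Concretely, a morphism $\si : [p'] \to [p]$ in $\Simp$ gives a map of simplicial sets $\Simp^{p'} \to \Simp^{p}$, and hence a DG ring homomorphism $\opn{Cyl}_{p}(B) \to \opn{Cyl}_{p'}(B)$. Applying the covariant functor $\opn{Hom}_{\cat{DGRng}}(A, -)$ to this simplicial DG ring yields the simplicial set $Z$.

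Proposition \ref{prop:445}, with $X := Z$ and $p := q_j$, then gives a canonical bijection
\[ \opn{Hom}_{\cat{SSet}}(\Simp^{q_j}, Z) \cong Z_{q_j} = \opn{Hom}_{\cat{DGRng}}(A, \opn{Cyl}_{q_j}(B)) , \]
which is exactly the asserted isomorphism.

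For later use in Theorem \ref{thm:152}, it is worth making the bijection explicit and checking that it is natural in $j$. A map $\phi : \Simp^{q_j} \to Z$ goes to $\phi(\opn{id}_{[q_j]}) \in Z_{q_j}$. Conversely, $g : A \to \opn{Cyl}_{q_j}(B)$ goes to the map sending $\si \in \Simp^{q_j}_p$ to $\opn{N}(\si, B) \circ g$. The transition maps $\be_{k, l}, \ga_{k, l} : \Simp^{q - 2} \to \Simp^{q - 1}$ of the diagram $J$ are maps between standard simplices, so naturality of the Yoneda bijection in the representing object makes the isomorphism compatible with them. On one side they act by precomposition; on the other they act through the induced homomorphisms $\opn{Cyl}_{q-1}(B) \to \opn{Cyl}_{q-2}(B)$.

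No step is a real obstacle. The only point needing care is that the simplicial structure maps of $Z$ are exactly those induced by the functor $\opn{N}(-, B)$ on maps of standard simplices, so that the naturality just described holds. This is immediate from the definitions.
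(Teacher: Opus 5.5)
Your proposal is correct and is the same argument as the paper's: identify $X_j = \Simp^{q_j}$ and apply the Yoneda isomorphism of Proposition \ref{prop:445} to the simplicial set $\opn{Hom}_{\cat{DGRng}}(A, \opn{Cyl}(B))$. Your extra remarks, giving the bijection explicitly and checking that it is natural in $j$, are also correct; the paper uses this naturality without comment in step $\cong^{(3)}$ of Theorem \ref{thm:152}.
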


\begin{proof}
Recall that $X_j = \Simp^{q_j}$.
Now apply Proposition \ref{prop:445} to the simplicial set
\[ \opn{Hom}_{\cat{DGRng}}(A, \opn{Cyl}_{}(B)) =
\{ \opn{Hom}_{\cat{DGRng}}(A, \opn{Cyl}_{p}(B)) \}_{p \in \N} . \qedhere \]
\end{proof}

Here is our key technical result.

\begin{thm} \label{thm:152}
Let $A$ and $B$ be DG rings, and let $\bsym{\La}^{q}_{i}$ be a horn in
$\Simp^q$. Then there is a canonical bijection
\[ \opn{Hom}_{\cat{SSet}} \bigl( \bsym{\La}^{q}_{i},
\opn{Hom}_{\cat{DGRng}}(A, \opn{Cyl}_{}(B)) \bigr) \cong
\opn{Hom}_{\cat{DGRng}}(A, \opn{N}(\bsym{\La}^{q}_{i}, B)) . \]
\end{thm}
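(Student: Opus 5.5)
The plan is a chain of canonical bijections, each coming from a universal property. First, Corollary \ref{cor:150} gives $\bsym{\La}^{q}_{i} \cong \colim{j \in J} X_j$ in $\cat{SSet}$. Since $\opn{Hom}_{\cat{SSet}}(-, Z)$ turns colimits into limits, for $Z := \opn{Hom}_{\cat{DGRng}}(A, \opn{Cyl}(B))$ we get
\[ \opn{Hom}_{\cat{SSet}}(\bsym{\La}^{q}_{i}, Z) \cong
\underset{j \in J}{\opn{lim}} \, \opn{Hom}_{\cat{SSet}}(X_j, Z) . \]
The limit is in $\cat{Set}$, over the inverse system indexed by the quiver $J$.

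Second, apply Lemma \ref{lem:154} termwise to replace each $\opn{Hom}_{\cat{SSet}}(X_j, Z)$ by $\opn{Hom}_{\cat{DGRng}}(A, \opn{Cyl}_{q_j}(B))$.

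Third, since $\opn{Hom}_{\cat{DGRng}}(A, -)$ preserves limits, the resulting limit is $\opn{Hom}_{\cat{DGRng}}\bigl(A, \lim_{j} \opn{Cyl}_{q_j}(B)\bigr)$. By Lemma \ref{lem:152} this equals $\opn{Hom}_{\cat{DGRng}}(A, \opn{N}(\bsym{\La}^{q}_{i}, B))$. Composing the three bijections proves the theorem.

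The one point needing care, and the expected main obstacle, is that the bijections of Lemma \ref{lem:154} must be natural along the transition maps $\be_{(k,l)}, \ga_{(k,l)}$. Only then do they assemble into an isomorphism of inverse systems and hence of limits. To check this, I would use the naturality of the Yoneda bijection of Proposition \ref{prop:445} in the simplex variable. A map $\phi : \Simp^{q-2} \to \Simp^{q-1}$ in $\Simp$ acts on $Z$ by postcomposition with $\opn{Cyl}(B)(\phi) : \opn{Cyl}_{q-1}(B) \to \opn{Cyl}_{q-2}(B)$. The simplicial structure of $\opn{Cyl}(B)$ is defined (Section \ref{sec:certain-dg-rngs}) as the map $\opn{N}(\phi, B)$ induced by functoriality of $\opn{N}(-, B)$ in the simplicial set variable. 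This is exactly the transition map of the inverse system $\{ \opn{Cyl}_{q_j}(B) \}$ used in Lemma \ref{lem:152}, so the two systems match.

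One should also confirm that the identification in Lemma \ref{lem:152} is the one induced by the restriction maps $\opn{N}(\al_j, B) : \opn{N}(\bsym{\La}^{q}_{i}, B) \to \opn{Cyl}_{q-1}(B)$. That follows from its proof. With this, the composite bijection has an explicit description. A DG ring homomorphism $f : A \to \opn{N}(\bsym{\La}^{q}_{i}, B)$ corresponds to the horn whose faces $\al_j$ are the maps $\opn{N}(\al_j, B) \circ f \in \opn{SHom}_{q-1}(A, B)$ for $j \in J_0$. This description also gives canonicity, meaning functoriality in $A$ and $B$.
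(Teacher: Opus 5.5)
Your proposal is correct and follows essentially the same chain of bijections as the paper: Corollary \ref{cor:150}, Hom out of a colimit, Lemma \ref{lem:154}, Hom into a limit, and Lemma \ref{lem:152}. Your explicit check that the Yoneda bijections are natural along the transition maps $\be_{(k,l)}, \ga_{(k,l)}$ is left implicit in the paper, and it is a worthwhile addition.
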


\begin{proof}
We have this sequence of canonical bijections:
\[ \begin{aligned}
&
\opn{Hom}_{\cat{SSet}} \bigl( \bsym{\La}^{q}_{i},
\opn{Hom}_{\cat{DGRng}}(A, \opn{Cyl}_{}(B)) \bigr)
\\
& \quad \cong^{(1)}
\opn{Hom}_{\cat{SSet}} \bigl( \colim{j \in J} X_j,
\opn{Hom}_{\cat{DGRng}}(A, \opn{Cyl}_{}(B)) \bigr)
\\
& \quad \cong^{(2)}
\mylim{j \in J} \opn{Hom}_{\cat{SSet}} \bigl(  X_j,
\opn{Hom}_{\cat{DGRng}}(A, \opn{Cyl}_{}(B)) \bigr)
\\
& \quad \cong^{(3)}
\mylim{j \in J} \opn{Hom}_{\cat{DGRng}}(A, \opn{Cyl}_{q_j}(B))
\\
& \quad \cong^{(4)}
\opn{Hom}_{\cat{DGRng}}(A, \mylim{j \in J} \opn{Cyl}_{q_j}(B))
\\
& \quad \cong^{(5)}
\opn{Hom}_{\cat{DGRng}}(A, \opn{N}(\bsym{\La}^{q}_{i}, B)) .
\end{aligned} \]
The isomorphism $\cong^{(1)}$ is by Corollary \ref{cor:150}.
The isomorphism $\cong^{(2)}$ is by the definition of a
colimit in the category $\cat{SSet}$.
The isomorphism $\cong^{(3)}$ is by Lemma \ref{lem:154}.
The isomorphism $\cong^{(4)}$ is by the definition of a
limit in the category
$\cat{DGRng}$.
And the isomorphism $\cong^{(5)}$ is by Lemma \ref{lem:152}.
\end{proof}

Recall that a horn in a simplicial set $Z$ is a map of simplicial sets
$\si : \La^q_i \to Z$ from a horn $\La^q_i \sub \Simp^q$.
The next corollary is merely a reformulation of Theorem \ref{thm:152}.

\begin{cor} \label{cor:152}
There is a canonical bijection between horns
\[ \si : \La^q_i \to \opn{Hom}_{\cat{DGRng}}(A, \opn{Cyl}_{}(B)) \]
and DG ring homomorphisms
\[ f : A \to \opn{N}(\La^q_i, B) . \]
\end{cor}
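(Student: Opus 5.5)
The corollary is Theorem \ref{thm:152} with its two sides unpacked, so the proof will do exactly that. By definition, a horn $\si : \La^q_i \to \opn{Hom}_{\cat{DGRng}}(A, \opn{Cyl}_{}(B))$ is an element of the set
\[ \opn{Hom}_{\cat{SSet}} \bigl( \bsym{\La}^{q}_{i}, \opn{Hom}_{\cat{DGRng}}(A, \opn{Cyl}_{}(B)) \bigr) , \]
which is the left side of Theorem \ref{thm:152}. A DG ring homomorphism $f : A \to \opn{N}(\La^q_i, B)$ is an element of the right side. The canonical bijection of Theorem \ref{thm:152} then gives the claimed correspondence $\si \leftrightarrow f$.

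To make the correspondence concrete, I will trace an element through the chain of bijections $\cong^{(1)}$ through $\cong^{(5)}$ in the proof of Theorem \ref{thm:152}. Take a horn $\si$. By Corollary \ref{cor:150}, $\si$ is the same as a compatible family of restrictions $\si \circ \al_j : \Simp^{q-1} \to \opn{SHom}(A, B)$ for $j \in J_0$. Compatibility means they agree after pulling back along $\be_{k,l}$ and $\ga_{k,l}$. By Proposition \ref{prop:445}, each $\si \circ \al_j$ is a DG ring homomorphism $f_j : A \to \opn{Cyl}_{q-1}(B)$. Compatibility becomes the equation $\opn{Cyl}(\be_{k,l}) \circ f_k = \opn{Cyl}(\ga_{k,l}) \circ f_l$ as maps $A \to \opn{Cyl}_{q-2}(B)$. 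The universal property of the limit in $\cat{DGRng}$, together with Lemma \ref{lem:152}, then assembles the family $\{ f_j \}$ into a single $f : A \to \opn{N}(\La^q_i, B)$. Read backwards, the same chain recovers $\si$ from $f$.

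There is no real obstacle here. The only point needing care is canonicity: each step must be natural in $A$ and $B$, so that the bijection does not depend on the presentation of the horn as a colimit. Each of the five bijections is induced by a universal property or by the Yoneda isomorphism of Proposition \ref{prop:445}, so naturality is automatic.
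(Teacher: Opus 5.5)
Your proposal is correct and takes the same approach as the paper, which presents the corollary as a direct reformulation of Theorem \ref{thm:152}; your element-wise trace through the bijections $\cong^{(1)}$--$\cong^{(5)}$ is also accurate. One minor point: naturality in $A$ and $B$ is not what shows the bijection is independent of the chosen colimit presentation of $\La^q_i$; that follows instead from checking that the composite sends $f$ to the horn $x \mapsto \opn{N}(x, B) \circ f$ for $x : \Simp^p \to \La^q_i$, which is intrinsic (the paper fixes the presentation anyway, so this is not needed for the statement).
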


\section{The Kan Condition and the Hom Groupoid}
\label{sec:kan-cond}

In this section we prove the main theorem of the paper, i.e.\ Theorem
\ref{thm:150} from the Introduction. It is repeated here as Theorem
\ref{thm:215}. After that we provide a detailed description of the
resulting fundamental groupoid, from the simplicial angle.

Recall the simplicial cylinder  DG ring
$\opn{Cyl}_{}(B) := \{ \opn{Cyl}_{q}(B) \}_{q \in \N}$
from Definition \ref{dfn:215}, associated to a DG ring $B$.

\begin{dfn} \label{dfn:300}
Let $A$ and $B$ be DG rings.
\begin{enumerate}
\item For every $q \in \N$ let
\[ \opn{SHom}_q(A, B) :=
\opn{Hom}_{\cat{DGRng}}(A, \opn{Cyl}_{q}(B)) . \]
It is called the set of {\em $q$-dimensional DG ring homomorphisms from $A$ to
$B$}.

\item The simplicial set
\[ \opn{SHom}(A, B) := \bigl \{ \opn{SHom}_q(A, B) \bigr\}_{q \in \N} \]
is called the {\em simplicial Hom set of DG ring homomorphisms from $A$ to
$B$}.
\end{enumerate}
\end{dfn}

The simplicial structure on $\opn{SHom}(A, B)$ comes from that of
$\opn{Cyl}_{}(B)$.

Here is the main theorem of our paper (a repetition of Theorem \ref{thm:150}
from the Introduction).

\begin{thm} \label{thm:215}
Let $\til{A}$ be a semi-free DG ring, and let $B$ be any DG ring.
Then the simplicial set
$\opn{SHom}(\til{A}, B)$
is a Kan complex.
\end{thm}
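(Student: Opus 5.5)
The plan is to follow the sketch in the Introduction: translate the horn-filling problem into a lifting problem for DG rings, and solve it with Theorem \ref{thm:157}. Fix a horn $\si : \bsym{\La}^{q}_{i} \to \opn{SHom}(\til{A}, B)$. By Corollary \ref{cor:152}, $\si$ corresponds to a DG ring homomorphism $f : \til{A} \to \opn{N}(\bsym{\La}^{q}_{i}, B)$. The inclusion $\bsym{\La}^{q}_{i} \hookrightarrow \Simp^q$ induces, by functoriality of $X \mapsto \opn{N}(X, B)$, a DG ring homomorphism
\[ w : \opn{Cyl}_q(B) = \opn{N}(\Simp^q, B) \to \opn{N}(\bsym{\La}^{q}_{i}, B) . \]
If $w$ is a surjective quasi-isomorphism, then Theorem \ref{thm:157} (with base $\Z$) gives $f' : \til{A} \to \opn{Cyl}_q(B)$ with $w \circ f' = f$. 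Proposition \ref{prop:445} turns $f'$ into $\si' : \Simp^q \to \opn{SHom}(\til{A}, B)$. Naturality of the bijections in the proof of Theorem \ref{thm:152} in the simplicial set variable (restriction along $\bsym{\La}^{q}_{i} \to \Simp^q$ corresponds to composing with $w$) then shows $\si'|_{\bsym{\La}^{q}_{i}} = \si$, so $\si'$ is a filler.

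The real content is showing that $w$ is a surjective quasi-isomorphism. Via Corollary \ref{cor:240}, $w$ becomes the restriction map
\[ \opn{R}((\Simp^q)^{\mrm{nd}}, B) \to \opn{R}((\bsym{\La}^{q}_{i})^{\mrm{nd}}, B) , \]
that is, restriction of $B$-valued functions from nondegenerate simplices of $\Simp^q$ to those of the horn. Since $(\bsym{\La}^{q}_{i})^{\mrm{nd}} \subseteq (\Simp^q)^{\mrm{nd}}$ is an inclusion of finite graded sets, this map is degreewise split surjective, with kernel $K \otimes_{\Z} B$. Here $K \subseteq \opn{R}((\Simp^q)^{\mrm{nd}}, \Z)$ is spanned by the delta functions of the two missing simplices, namely the face $\pa^i$ (of dimension $q-1$) and the top simplex $(0, \ldots, q)$.

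By formula (\ref{eqn:201}), the differential sends $e_{\pa^i}$ to $\pm e_{(0,\ldots,q)}$, because the top simplex is the only $q$-simplex whose $i$-th face is $\pa^i$. Hence $K$ is the cone of an isomorphism $\Z \to \Z$, which is contractible; equivalently, it is the relative simplicial cochain complex of $(\Simp^q, \bsym{\La}^{q}_{i})$. A contractible complex of free abelian groups stays contractible after $- \otimes_{\Z} B$, so $K \otimes_{\Z} B$ is acyclic. The long exact cohomology sequence of $0 \to K \otimes B \to \opn{Cyl}_q(B) \to \opn{N}(\bsym{\La}^{q}_{i}, B) \to 0$ then shows that $w$ is a quasi-isomorphism.

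The case $q = 1$ needs a separate check. There the horn is a single vertex, the missing simplices are the other vertex and the edge, and the same computation applies.

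I expect the main obstacle to be this step: verifying the sign and the uniqueness claim in formula (\ref{eqn:201}) for the missing face, and checking that the kernel really is just $\Z e_{\pa^i} \oplus \Z e_{(0,\ldots,q)}$. This also requires confirming that every other nondegenerate simplex of $\Simp^q$ lies in the horn, which follows directly from the definition of $\bsym{\La}^{q}_{i}$.
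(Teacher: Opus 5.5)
Your proposal is correct and has the same overall structure as the paper's proof. You use Corollary \ref{cor:152} to turn the horn into $f : \til{A} \to \opn{N}(\bsym{\La}^{q}_{i}, B)$, Lemma \ref{lem:157} for $w$, Theorem \ref{thm:157} for the lift, and Proposition \ref{prop:445} for the filler. At the end you also spell out, via naturality of the bijection of Theorem \ref{thm:152} in the simplicial variable, the ``little calculation'' that the paper leaves implicit.

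The only real divergence is in proving that $w$ is a quasi-isomorphism. The paper first reduces to $B = \Z$, on the grounds that the DG rings involved are semi-free as DG $\Z$-modules. It then uses that $\Simp^q$ and $\bsym{\La}^{q}_{i}$ are weakly contractible, so $\Z \to \opn{N}(\Simp^q, \Z)$ and $\Z \to \opn{N}(\bsym{\La}^{q}_{i}, \Z)$ are both quasi-isomorphisms.

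You instead compute the kernel directly:
\begin{itemize}
\item The only nondegenerate simplices of $\Simp^q$ outside the horn are $\pa^i$ and $(0, \ldots, q)$.
\item Formula (\ref{eqn:201}) gives $\d(e_{\pa^i}) = (-1)^i e_{(0, \ldots, q)}$, so the kernel is a two-term contractible complex of free abelian groups.
\item The contracting homotopy survives $- \otimes_{\Z} B$, and the degreewise split short exact sequence finishes the argument.
\end{itemize}

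Your route is purely combinatorial and self-contained. It avoids the topological input (weak contractibility of horns, and identifying normalized cochains with cohomology), and it avoids the flatness reduction from $B$ to $\Z$. The paper's route is shorter, given those standard facts.

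Your separate check for $q = 1$ is unnecessary: the general kernel computation already covers that case.
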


The proof of the theorem requires a lemma.

\begin{lem} \label{lem:157}
Let
$\si : \La^q_i \to \Simp^q$ be the inclusion of a horn.
The the induced DG ring homomorphism
\[ v : \opn{N}(\Simp^q, B) \to \opn{N}(\La^q_i, B) \]
is a surjective quasi-isomorphism.
\end{lem}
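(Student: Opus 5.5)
The plan is to reduce to $B = \Z$ and to use the nondegenerate model from Proposition \ref{prop:210}. By Corollary \ref{cor:240} there are isomorphisms $\opn{N}(\Simp^q, B) \cong \opn{R}((\Simp^q)^{\mrm{nd}}, \Z) \ot_{\Z} B$ and $\opn{N}(\La^q_i, B) \cong \opn{R}((\La^q_i)^{\mrm{nd}}, \Z) \ot_{\Z} B$. Under them, $v$ becomes $v_{\Z} \ot \opn{id}_B$, where $v_{\Z}$ is restriction of functions along the inclusion of semi-simplicial sets $(\La^q_i)^{\mrm{nd}} \sub (\Simp^q)^{\mrm{nd}}$. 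Note that $v$ is a DG ring homomorphism simply because $\opn{N}(-, B)$ is a functor on $\cat{SSet}^{\mrm{op}}$.

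\emph{Surjectivity.} In each degree $p$, $v_{\Z}$ is the restriction map
\[ \opn{Hom}_{\cat{Set}}((\Simp^q)^{\mrm{nd}}_p, \Z) \to \opn{Hom}_{\cat{Set}}((\La^q_i)^{\mrm{nd}}_p, \Z) \]
along an inclusion of finite sets. It sends $e_x$ to $e_x$ if $x$ lies in the horn, and to $0$ otherwise. Hence it is a split surjection of free abelian groups, and tensoring with $B$ keeps it surjective.

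\emph{Quasi-isomorphism.} Let $K := \opn{Ker}(v_{\Z})$. It is free on the delta functions of the nondegenerate simplices of $\Simp^q$ that are not in the horn. There are exactly two such simplices: the top simplex $\tau = (0, \ldots, q)$ and its $i$-th face $\pa_i \tau$. So $K$ is free of rank $2$, concentrated in degrees $q-1$ and $q$. By formula (\ref{eqn:201}), $\d(e_{\pa_i \tau}) = \pm e_{\tau}$, since $\tau$ is the only $q$-simplex whose $i$-th face is $\pa_i\tau$ (and it has no other faces equal to $\pa_i \tau$). Therefore $K$ is contractible as a complex of free abelian groups, and so is $K \ot_{\Z} B$. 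The short exact sequence
\[ 0 \to K \ot_{\Z} B \to \opn{N}(\Simp^q, B) \to \opn{N}(\La^q_i, B) \to 0 \]
stays exact because the sequence over $\Z$ is degreewise split. Its long exact cohomology sequence then shows that $v$ is a quasi-isomorphism.

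If one prefers not to rely on Corollary \ref{cor:240}, there is an alternative route. Both $\opn{N}(\Simp^q, \Z)$ and $\opn{N}(\La^q_i, \Z)$ are normalized simplicial cochain complexes of contractible spaces, since horns are contractible. Both therefore have cohomology $\Z$ in degree $0$, generated by the unit, and $v$ preserves the unit. One would then tensor with $B$ using the freeness of these complexes and a Künneth argument (or, more cleanly, the contractible kernel as above).

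The main point to get right is the sign and uniqueness in $\d(e_{\pa_i\tau}) = \pm e_\tau$, and the observation that the kernel is spanned by exactly these two simplices. In particular, the $1$-dimensional cases $q = 1$ must be checked. For instance, for $\La^1_0 = \{(0)\}$ the missing simplices are $(1)$ and $(0,1)$, which is consistent with the general description.
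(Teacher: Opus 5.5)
Your proof is correct, and for the quasi-isomorphism it takes a different route from the paper. The surjectivity argument is the same as the paper's: via Proposition \ref{prop:210}, $v$ becomes restriction of functions along the injection $(\La^q_i)^{\mrm{nd}} \to (\Simp^q)^{\mrm{nd}}$.

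For the quasi-isomorphism, the paper first reduces to $B = \Z$, using that $\opn{N}(\Simp^q, \Z)$ and $\opn{N}(\La^q_i, \Z)$ are semi-free as DG $\Z$-modules. It then invokes the topological fact that $\Simp^q$ and $\La^q_i$ are weakly contractible. So both DG rings receive quasi-isomorphisms from $\Z$, and $v$ is a quasi-isomorphism by two-out-of-three.

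You instead compute the kernel directly. The only nondegenerate simplices of $\Simp^q$ outside the horn are $\tau$ and $\pa_i \tau$, and $\d(e_{\pa_i\tau}) = (-1)^i e_\tau$. So the kernel is a contractible two-term complex. The degreewise split short exact sequence then handles an arbitrary $B$ at once, with no separate flatness reduction.

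Your argument is self-contained and purely combinatorial: it is the algebraic shadow of the fact that $\Simp^q$ is obtained from $\La^q_i$ by an elementary expansion. It does not appeal to the homotopy invariance of simplicial cohomology. The paper's argument is shorter given that standard input. It also applies without change to any inclusion of dimension-wise finite simplicial sets that is a weak homotopy equivalence, not only to horn inclusions.
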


\begin{proof}
In view of Definition \ref{dfn:195}, and because
the DG rings $\opn{N}(\Simp^q, \Z)$ and
$\opn{N}(\La^q_i, \Z)$ are semi-free as DG $\Z$-modules,
we may assume that $B = \Z$.

Both simplicial sets $\Simp^q$ and $\La^q_i$ are weakly contractible, and hence
the DG ring homomorphisms
$\Z \to \opn{N}(\Simp^q, \Z)$ and
$\Z \to \opn{N}(\La^q_i, \Z)$
are quasi-isomorphisms. Therefore $v$ is a quasi-isomorphism.

According to  Proposition \ref{prop:210} we can replace the DG ring homomorphism
\[ v : \opn{N}(\Simp^q, \Z) \to \opn{N}(\La^q_i, \Z) \]
with the isomorphic
\[ w : \opn{R}((\Simp^q)^{\mrm{nd}}, \Z) \to
\opn{R}((\La^q_i)^{\mrm{nd}}, \Z) . \]
As a homomorphism of graded abelian groups, this is
\[ w' : \opn{Hom}_{\cat{Set}}((\Simp^q)^{\mrm{nd}}, \Z) \to
\opn{Hom}_{\cat{Set}}((\La^q_i)^{\mrm{nd}}, \Z) . \]
Since
$\si' : (\La^q_i)^{\mrm{nd}} \to (\Simp^q)^{\mrm{nd}}$
is an injective map of graded sets, and
$w' = \opn{Hom}_{\cat{Set}}(\si', \Z)$,
it follows that $w'$ is a surjective homomorphism of graded abelian groups.
\end{proof}

\begin{proof}[Proof of Theorem \tup{\ref{thm:215}}]
Let
\[ \si : \La^{q}_{i}  \to \opn{Hom}_{\cat{DGRng}}(\til{A}, \opn{Cyl}_{}(B))
= \opn{SHom}_{}(\til{A}, B) \]
be a horn. According to Corollary \ref{cor:152}, $\si$ corresponds canonically
to a DG ring homomorphism
$f : \til{A} \to \opn{N}(\La^{q}_{i}, B)$.
By Lemma \ref{lem:157} the DG ring homomorphism
$v : \opn{N}(\Simp^{q}, B) \to \opn{N}(\La^{q}_{i}, B)$
is a surjective quasi-isomorphism.
Theorem \ref{thm:157} says that there is a DG ring homomorphism
$f' : \til{A} \to  \opn{N}(\Simp^{q}, B)$
lifting $f$. See next diagram.
\[ \begin{tikzcd} [column sep = 10ex, row sep = 4ex]
&
\opn{N}(\Simp^{q}, B)
\ar[d, "{v}"]
\\
\til{A}
\ar[r, "{f}" swap]
\ar[ur, dashed, "{f'}"]
&
\opn{N}(\La^{q}_{i}, B)
\end{tikzcd} \]
Thus
\[ f' \in \opn{Hom}_{\cat{DGRng}}(\til{A}, \opn{Cyl}_{q}(B))
= \opn{SHom}_{q}(\til{A}, B) . \]
By Proposition \ref{prop:445} there is a corresponding map of
simplicial sets
\[ \si' : \Simp^{q}  \to \opn{Hom}_{\cat{DGRng}}(\til{A}, \opn{Cyl}_{}(B))
= \opn{SHom}_{}(\til{A}, B)  . \]
A little calculation shows that $\si'$ is a filler for $\si$.
\end{proof}

In the remainder of this section we discuss an important consequence of Theorem
\ref{thm:215}.
We fix a semi-free DG ring $\til{A}$, and an arbitrary DG ring $B$.
It will be
convenient to introduce the abbreviation
\begin{equation} \label{eqn:425}
S_q := \opn{SHom}_q(\til{A}, B) =
\opn{Hom}_{\cat{DGRng}}(\til{A}, \opn{Cyl}_{q}(B)) .
\end{equation}
Thus the simplicial set $S := \{ S_{q} \}_{q \in \N}$ is a Kan complex.
According to
\cite[Definition 1.4.6.12, tag =
\href{https://kerodon.net/tag/00HZ}{\tt 00HZ}]{Lu2},
the Kan complex $S$ has a {\em fundamental groupoid}
$\bpi_{\leq 1}(S)$.

\begin{dfn} \label{dfn:400}
Let $\til{A}$ be a semi-free DG ring, and let $B$ be any DG ring.
The
{\em groupoid of DG ring homomorphisms from $\til{A}$ to $B$}, also called
the {\em Hom groupoid from $\til{A}$ to $B$},
is the groupoid
\[ \opn{SHom}_{\leq 1}(\til{A}, B) :=
\bpi_{\leq 1} \bigl( \opn{SHom}(\til{A}, B) \bigr) =
\bpi_{\leq 1} \bigl( \opn{Hom}_{\cat{DGRng}}(\til{A}, \opn{Cyl}_{}(B)) \bigr)
. \]
\end{dfn}

Here is a detailed description of the groupoid
$S_{\leq 1} = \bpi_{\leq 1}(S)$ in terms of the simplicial data,
following
\cite[Section 1.4, tag = \href{https://kerodon.net/tag/0039}{\tt 0039}]{Lu2},
but translated to our setting.
The objects of $S_{\leq 1}$ are the elements of $S_0$, namely the DG ring
homomorphisms
$f : A \to B$. They will be denoted by the letter $f$, sometimes with
subscripts. The elements of $S_1$ are DG ring homomorphisms
$g : A \to \opn{Cyl}_{1}(B)$.
The simplicial structure of $S$ indicates that $g$ represents a $1$-morphism
from $f_{(0)} := \pa_1(g)$ to $f_{(1)} := \pa_0(g)$.
In diagrams it looks looks this:
\begin{equation} \label{eqn:395}
\begin{tikzcd} [column sep = 6ex, row sep = 6ex]
(0)
\ar[r, "{(0, 1)}"]
&
(1)
\end{tikzcd}
\qquad
\begin{tikzcd} [column sep = 6ex, row sep = 6ex]
f_{(0)}
\ar[r, "{g}"]
&
f_{(1)}
\end{tikzcd}
\end{equation}
The first diagram shows the nondegenerate elements of $\Simp^1$, and the second
shows the corresponding elements of $S$.

Fixing $f_{(0)}$ and $f_{(1)}$, let us denote by
$S_1(f_{(0)}, f_{(1)})$ the set of all the $g \in S_1$ such that
$f_{(0)} = \pa_1(g)$ and $f_{(1)} = \pa_0(g)$.
For $f_{(0)} = f_{(1)} = f$, the set $S_1(f, f)$ contains the special element
$\opn{id}_f := \opn{s}_0(f)$.

The elements $h \in S_2$ are the DG ring homomorphisms
$h : \til{A} \to \opn{Cyl}_2(B)$. Their internal algebraic description will be
studied in Section \ref{sec:algeb-high-homs}.
Their simplicial roles are explained here.

Suppose we are given $g_{(0, 1)}, g_{(0, 2)} \in S_1(f_{(0)}, f_{(1)})$.
A {\em simplicial homotopy}  from $g_{(0, 1)}$ to $g_{(0, 2)}$ is an element
$h \in S_2$ such that
$\pa_0(h) = \opn{id}_{f_{(1)}}$,
$\pa_1(h) = g_{(0, 2)}$ and $\pa_2(h) = g_{(0, 1)}$.
This is shown in the next diagrams.
\begin{equation} \label{eqn:355}
\begin{tikzcd} [column sep = 6ex, row sep = 6ex]
&
(2)
\arrow[d, phantom, "{(0, 1, 2)}"  {scale=0.9}]
\\
(0)
\ar[ur, "{(0, 2)}"]
\ar[rr, "{(0, 1)}" swap]
&
{}
&
(1)
\ar[ul, "{(1, 2)}" swap]
\end{tikzcd}
\qquad
\begin{tikzcd} [column sep = 6ex, row sep = 6ex]
&
f_{(2)}
\arrow[d, phantom, "{h}"]
\\
f_{(0)}
\ar[ur, "{g_{(0, 2)}}"]
\ar[rr, "{g_{(0, 1)}}" swap]
&
{}
&
f_{(1)}
\ar[ul, "{\opn{id}_{f_{(1)}}}" swap]
\end{tikzcd}
\end{equation}
This is a rephrasing of
\cite[Definition 1.4.3.1, tag =
\href{https://kerodon.net/tag/0037}{\tt 0037}]{Lu2}.
According to
\cite[Proposition 1.4.3.5, tag =
\href{https://kerodon.net/tag/003Z}{\tt 003Z}]{Lu2},
simplicial homotopy is an equivalence relation on the set
$S_1(f_{(0)}, f_{(1)})$.
For $g \in S_1(f_{(0)}, f_{(1)})$ let us denote by $[g]$ its simplicial homotopy
class.

Next we introduce {\em simplicial composition}, following
\cite[Definition 1.4.4.1, tag = \lb
\href{https://kerodon.net/tag/0042}{\tt 0042}]{Lu2}.
Suppose we are given
$f_{(0)}, f_{(1)}, f_{(2)} \in S_0$ and
$g_{(i, j)} \in S_1(f_{(i)}, f_{(j)})$.
The element $g_{(0, 2)}$ is said to be a simplicial composition of
$g_{(0, 1)}$ and $g_{(1, 2)}$ if there exists some
$h \in S_2$ such that
$\pa_0(h) = g_{(1, 2)}$,
$\pa_1(h) = g_{(0, 2)}$ and
$\pa_2(h) = g_{(0, 1)}$.
See next diagrams.
\begin{equation} \label{eqn:392}
\begin{tikzcd} [column sep = 6ex, row sep = 6ex]
&
(2)
\arrow[d, phantom, "{(0, 1, 2)}"  {scale=0.9}]
\\
(0)
\ar[ur, "{(0, 2)}"]
\ar[rr, "{(0, 1)}" swap]
&
{}
&
(1)
\ar[ul, "{(1, 2)}" swap]
\end{tikzcd}
\qquad
\begin{tikzcd} [column sep = 6ex, row sep = 6ex]
&
f_{(1)}
\arrow[d, phantom, "{h}"]
\\
f_{(0)}
\ar[ur, "{g_{(0, 2)}}"]
\ar[rr, "{g_{(0, 1)}}" swap]
&
{}
&
f_{(1)}
\ar[ul, "{g_{(1, 2)}}" swap]
\end{tikzcd}
\end{equation}
Note that (\ref{eqn:355}) is a special case of (\ref{eqn:392}).
According to
\cite[Proposition 1.4.4.2, tag =
\href{https://kerodon.net/tag/0043}{\tt 0043}]{Lu2},
simplicial composition is well-defined on homotopy classes,
i.e.\ the multiplication
$[g_{(0, 2)}] := [g_{(1, 2)}] \cd [g_{(0, 1)}]$
does depend on representatives.
Moreover, the composition is associative, with units $[\opn{id}_f]$, and all the
$[g]$ are invertible for composition.
Thus we obtain the groupoid $\bpi_{\leq 1}(S)$.

\section{Algebraic Description of Higher Homomorphisms}
\label{sec:algeb-high-homs}

In this section we describe the algebraic properties of higher DG
ring homomorphisms.
The main result is Theorem \ref{thm:390}, which is a higher ($q = 2$)
variant of Keller's description of DG ring homomorphisms
$g : A \to \opn{Cyl}_{1}(B)$ in terms of Keller derivations.

Throughout this section we fix DG rings $A$ and $B$. (We do not assume $A$ is
semi-free.)
For every $q$ we have the set
\begin{equation} \label{eqn:400}
S_q := \opn{Hom}_{\cat{DGRng}}(A, \opn{Cyl}_{q}(B)) .
\end{equation}
As $q$ varies we obtain the simplicial set
$S = \{ S_{q} \}_{q \in \N}$.
Note that $S$ might not be a Kan complex.

The elements of $S_0$ are the DG ring homomorphisms
$f : A \to B$. There is nothing more to say about them.

The elements of $S_1$ are the DG ring homomorphisms
$g : A \to \opn{Cyl}_1(B)$. As explained in Section
\ref{sec:prelims-dg}, these correspond to
Keller homotopies $\ga : f_{(0)} \twoto f_{(1)}$. The simplicial relation
between $g$, $f_{(0)}$ and $f_{(1)}$ is this:
$f_{(0)} = \pa_1(g)$ and $f_{(1)} = \pa_0(g)$.
The matrix description is
\[ g = \bmat{f_{(0)} & \bmsp y \cd \ga \\ 0 & f_{(1)}} , \]
where $y$ is a degree $1$ variable.
In terms of the standard basis
$e_{(0)}, e_{(1)}, e_{(0, 1)}$ of $\opn{Cyl}_1(\Z)$, we have
\[ g =
\bigl( e_{(0)} \ot f_{(0)} \bigr) + \bigl( e_{(1)} \ot f_{(1)} \bigr) +
\bigl( e_{(0, 1)} \ot \ga \bigr) . \]

The elements of $S_2$ are the higher DG ring   homomorphisms
$h : A \to \opn{Cyl}_2(B)$.
Before describing $h$, we need to know more about the  DG ring
$\opn{Cyl}_{2}(B)$.

The DG ring $\opn{Cyl}_{2}(B)$ can't be described as a matrix DG ring,
so we will have to describe its elements and its operations by formulas.
Recall that
\begin{equation} \label{eqn:335}
\opn{Cyl}_{2}(B) = \opn{Cyl}_{2}(\Z) \ot_{\Z} B =
\boplus_{0 \leq p \leq 2}
\opn{Cyl}_{2}(\Z)^p \ot_{\Z} B ,
\end{equation}
a special case of Proposition \ref{prop:425}.

By definition we have
$\opn{Cyl}_{2}(\Z) = \opn{N}(\Simp^2, \Z)$.
Proposition \ref{prop:210} says that there is a canonical DG ring isomorphism
$\opn{Cyl}_{2}(\Z) \cong \opn{R}((\Simp^2)^{\mrm{nd}}, \Z)$.
In Proposition \ref{prop:195} the DG ring structure of
$\opn{R}((\Simp^2)^{\mrm{nd}}, \Z)$
is stated implicitly. Below we are going to write out this structure in detail.

The standard basis of $\opn{Cyl}_{2}(\Z)$ as a graded
abelian group is
\begin{equation} \label{eqn:345}
\begin{aligned}
& e_{(0)}, e_{(1)}, e_{(2)}
\\ &
e_{(0, 1)}, e_{(0, 2)}, e_{(1, 2)}
\\ &
e_{(0, 1, 2)} ,
\end{aligned}
\end{equation}
indexed by the elements of $(\Simp^2)^{\mrm{nd}}$.
The basis elements in the first row have degree $0$,
the elements in the second row have degree $1$,
and the element in the third row has degree $2$.
A homogeneous element $b \in \opn{Cyl}_{2}(B)^i$ is uniquely a sum
$b = \sum_{\bsym{j}}  e_{\bsym{j}} \ot b_{\bsym{j}}$,
with coefficients $b_{\bj} \in B^{i - p}$ for
$\bj = (j_0, \ldots, j_p)$.
In detail:
\begin{equation} \label{eqn:337}
\begin{aligned}
b
& = \bigl( e_{(0)} \ot b_{(0)} \bigr) +
\bigl( e_{(1)} \ot b_{(1)} \bigr) +
\bigl( e_{(2)} \ot b_{(2)} \bigr)
\\ & \quad
+ \bigl( e_{(0, 1)} \ot b_{(0, 1)} \bigr)
+ \bigl( e_{(0, 2)} \ot b_{(0, 2)} \bigr)
+ \bigl( e_{(1, 2)} \ot b_{(1, 2)} \bigr)
\\ & \quad
+ \bigl( e_{(0, 1, 2)} \ot b_{(0, 1, 2)} \bigr) .
\end{aligned}
\end{equation}

Here are the differentials of the basis elements from equation (\ref{eqn:345}):
\begin{equation} \label{eqn:336}
\begin{aligned}
&
\d(e_{(0)}) =  -e_{(0, 1)} - e_{(0, 2)} , \bmsp
\d(e_{(1)}) =  e_{(0, 1)} + e_{(1, 2)} , \bmsp
\d(e_{(2)}) =  e_{(0, 2)} + e_{(1, 2)}
\\
&
\d(e_{(0, 1)}) =  e_{(0, 1, 2)} , \bmsp
\d(e_{(0, 2)}) =  - e_{(0, 1, 2)} , \bmsp
\d(e_{(1, 2)}) =  e_{(0, 1, 2)}
\\
&
\d(e_{(0, 1, 2)}) = 0 .
\end{aligned}
\end{equation}
Hence the differential of a homogeneous element
$b = e_{\bj} \ot b_{\bj} \in \opn{Cyl}_{2}(\Z)^i$ is
\begin{equation}
\d(b) =
\bigl( \d(e_{\bj}) \ot b_{\bj} \bigr)) +
(-1)^p \cd \bigl( e_{\bj} \ot \d_{B}(b_{\bj} \bigr)
\bigr)
\end{equation}
for $\bj = (j_0, \ldots, j_p)$.

Addition is straightforward: for
$b = \sum_{\bsym{i}}  e_{\bsym{i}} \ot b_{\bsym{i}}$
and
$b' = \sum_{\bsym{i}}  e_{\bsym{i}} \ot b'_{\bsym{i}}$
the sum is
\begin{equation} \label{eqn:346}
c + c' =
\sum_{\bsym{i}}  e_{\bsym{i}} \ot (b_{\bsym{i}} + b'_{\bsym{i}}) .
\end{equation}

For multiplication we need to first specify how the standard basis elements
multiply. This is by the AW rule, see equation (\ref{eqn:212}). For sequences
$\bi = (i_0, \ldots, i_p)$ and
$\bj = (j_0, \ldots, j_q)$
in $(\Simp^2)^{\mrm{nd}}$
such that $i_p = j_0$, their concatenation is
\[ \bi \smallsmile \bj :=
(i_0, \ldots, i_p = j_0, j_1, \ldots, j_q) . \]
The multiplication is then:
\begin{equation} \label{eqn:339}
e_{\bi} \cd e_{\bj} =
\begin{cases}
e_{\bi \smallsmile \bj} & \text{if} \bmsp i_p = j_0
\\
0 & \text{otherwise} .
\end{cases}
\end{equation}
Therefore the multiplication of homogeneous elements
of $\opn{Cyl}_{2}(B)$, with $\bi$ and $\bj$ as above, is
\begin{equation} \label{eqn:340}
(e_{\bi} \ot b_{\bi}) \cd (e_{\bj} \ot b_{\bj}) =
(-1)^{p \cd q} \cd (e_{\bi} \cd e_{\bj}) \ot (b_{\bj} \cd b_{\bj}) .
\end{equation}

A graded abelian group homomorphism
$h : A \to \opn{Cyl}_{2}(B)$ of degree $0$ is described uniquely by such a sum:
\begin{equation} \label{eqn:360}
\begin{aligned}
h & =
\bigl( e_{(0)} \ot f_{(0)} \bigr) +
\bigl( e_{(1)} \ot f_{(1)} \bigr) +
\bigl( e_{(2)} \ot f_{(2)} \bigr)
\\ & \quad
+ \bigl( e_{(0, 1)} \ot \ga_{(0, 1)} \bigr)
+ \bigl( e_{(0, 2)} \ot \ga_{(0, 2)} \bigr)
+ \bigl( e_{(1, 2)} \ot \ga_{(1, 2)} \bigr)
\\ & \quad
+ \bigl( e_{(0, 1, 2)} \ot \si \bigr)
\end{aligned}
\end{equation}
where
\begin{equation} \label{eqn:365}
\begin{aligned}
&
f_{(0)} , f_{(1)} , f_{(2)} \in \opn{Hom}_{\Z}(A, B)^{0}
\\ &
\ga_{(0, 1)}, \ga_{(0, 2)}, \ga_{(1, 2)}   \in \opn{Hom}_{\Z}(A, B)^{-1}
\\ &
\si = \si_{(0, 1, 2)}   \in \opn{Hom}_{\Z}(A, B)^{-2} .
\end{aligned}
\end{equation}
The simplicial relations between the coefficients appearing in (\ref{eqn:360})
are shown in diagram (\ref{eqn:392}), where we write
\begin{equation} \label{eqn:430}
g_{(i, j)} = \bmat{f_{(i)} & \bmsp y \cd \ga_{(i, j)} \\ 0 & f_{(j)}} .
\end{equation}

There are surjective DG ring homomorphisms
\[ \pa_0,  \pa_1, \pa_2 : \opn{Cyl}_{2}(B) \to \opn{Cyl}_{1}(B) \]
coming from the simplicial structure.
They induce  degree $0$ homomorphism of graded abelian groups
\begin{equation} \label{eqn:401}
g_{(j, k)} := \pa_i \circ h : A \to \opn{Cyl}_{1}(B)
\end{equation}
for $\{ i, j, k \} = \{ 0, 1, 2 \}$ and $j < k$.
The explicit formulas are
\begin{equation} \label{eqn:351}
g_{(j, k)} = \pa_i \circ h =
\bigl( e_{(j)} \ot f_{(j)} \bigr) +
\bigl( e_{(k)} \ot f_{(k)} \bigr)
+ \bigl( e_{(j, k)} \ot \ga_{(j, k)} \bigr) .
\end{equation}
In matrix notation they are
(\ref{eqn:430}).

The product DG ring homomorphism
\begin{equation} \label{eqn:352}
\pa_0 \times \pa_2 \times \pa_2 : \opn{Cyl}_{2}(B) \to
\opn{Cyl}_{1}(B) \times \opn{Cyl}_{1}(B) \times \opn{Cyl}_{1}(B)
\end{equation}
has kernel
\begin{equation} \label{eqn:353}
\opn{Ker}(\pa_0 \times \pa_2 \times \pa_2) =
e_{(0, 1, 2)} \ot B \sub \opn{Cyl}_{2}(B)  .
\end{equation}

\begin{thm} \label{thm:390}
Consider the degree $0$ homomorphism  of graded abelian groups
$h : A \to \opn{Cyl}_2(B)$ from equation \tup{(\ref{eqn:360})}.
Assume that
$\pa_i \circ h : A \to \opn{Cyl}_{1}(B)$ are DG ring homomorphisms for all
$i$. Then the following two conditions are equivalent\tup{:}
\begin{itemize}
\rmitem{i} $h$ is a DG ring homomorphism.

\rmitem{ii} These equalities hold\tup{:}
\[ \tag*{\hspace{1cm} ($*$)}
\si(a \cd a') =
\si(a) \cd f_{(2)}(a') + f_{(0)}(a) \cd \si(a')
+ (-1)^{k - 1} \cd \ga_{(0, 1)}(a) \cd \ga_{(1, 2)}(a') \]
for all $a \in A^k$ and $a' \in A^{k'}$, and
\[ \tag*{\hspace{0.8cm} ($**$)}
\si \circ \d_A - \d_{B} \circ \si =
\ga_{(0, 1)} - \ga_{(0, 2)} + \ga_{(1, 2)} . \]
\end{itemize}
\end{thm}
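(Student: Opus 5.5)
The plan is to reduce everything to the component of $\opn{Cyl}_2(B)$ along the top basis element $e_{(0,1,2)}$. Set $\Phi := \pa_0 \times \pa_1 \times \pa_2$. This is a DG ring homomorphism, and by (\ref{eqn:353}) its kernel is $e_{(0,1,2)} \ot B$. A degree $0$ map $h$ is a DG ring homomorphism iff two things hold: $h(a a') - h(a) h(a') = 0$ for all homogeneous $a, a'$, and $\d \circ h - h \circ \d_A = 0$ (unitality we treat at the end). By hypothesis each $\pa_i \circ h$ is a DG ring homomorphism. Hence both defects are killed by $\Phi$, so each lies in $e_{(0,1,2)} \ot B$. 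It therefore suffices to compute the $e_{(0,1,2)}$-coefficient of each defect and show that it vanishes iff ($*$), respectively ($**$), holds.

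\emph{Multiplicativity.} Expand $h(a) h(a')$ using (\ref{eqn:360}), the multiplication rule (\ref{eqn:339}) and the sign rule (\ref{eqn:340}). I would use the sign convention for $e_{\bi} \ot \phi$ applied to $a \in A^k$ in which $e_{\bi}$ of degree $p$ passes $a$, giving $(-1)^{pk}$. The products landing in $e_{(0,1,2)}$ come only from the concatenations $(0)\smallsmile(0,1,2)$, $(0,1,2)\smallsmile(2)$ and $(0,1)\smallsmile(1,2)$.
\begin{itemize}
\item The first gives $f_{(0)}(a)\si(a')$.
\item The second gives $\si(a) f_{(2)}(a')$, with a sign to check.
\item The third gives $\ga_{(0,1)}(a)\ga_{(1,2)}(a')$ with sign $(-1)^{1\cdot 1}$ from (\ref{eqn:340}), combined with the Koszul sign from moving $e_{(1,2)}$ past $a$.
\end{itemize}
The resulting total sign should be $(-1)^{k-1}$. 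Comparing with the coefficient $\si(aa')$ of $h(aa')$ then yields ($*$). The main obstacle is getting these signs exactly right. To fix the convention, I would first redo the $q=1$ case against Proposition \ref{prop:330}: choose the convention so that $e_{(0,1)}\ot\ga$ reproduces the twisted derivation rule of Definition \ref{dfn:330}(i), and then apply the same convention here.

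\emph{Differential.} Use (\ref{eqn:336}) to compute the differential of $h(a)$ in $\opn{Cyl}_2(B)$. The $e_{(0,1,2)}$-coefficient of $\d(h(a))$ is $\ga_{(0,1)}(a) - \ga_{(0,2)}(a) + \ga_{(1,2)}(a)$ plus the term $(-1)^2 \d_B(\si(a))$, up to the same Koszul sign conventions. The $e_{(0,1,2)}$-coefficient of $h(\d_A a)$ is $\si(\d_A a)$. Equating the two gives ($**$). Here I would again check the sign on $\d_B\circ\si$ against the degree $-2$ convention, so that it appears with a minus sign as stated.

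Unitality is automatic. The $e_{(0,1,2)}$-coefficient of $h(1)$ is $\si(1)$, and ($*$) with $a = a' = 1$ gives $\si(1) = \si(1) + \si(1) - \ga_{(0,1)}(1)\ga_{(1,2)}(1)$. Here $\ga_{(0,1)}(1) = 0$, because the Keller derivations already satisfy $\ga(1) = 0$. Hence $\si(1) = 0$ and $h(1) = 1$. Conversely, if $h$ is a homomorphism then $h(1) = 1$ forces $\si(1) = 0$, consistent with this. Combining the three parts: since the defects lie in $\operatorname{Ker}\Phi$, (i) holds iff their top coefficients vanish, which is exactly (ii).
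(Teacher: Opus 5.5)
Your proposal is correct and is essentially the paper's proof: both use $\operatorname{Ker}(\partial_0 \times \partial_1 \times \partial_2) = e_{(0,1,2)} \otimes B$ to reduce the question to comparing the $e_{(0,1,2)}$-coefficients of $h(a \cdot a')$ versus $h(a) \cdot h(a')$, and of $\mathrm{d} \circ h$ versus $h \circ \mathrm{d}_A$, which gives ($*$) with the Koszul sign $(-1)^{k-1}$ and gives ($**$). You handle both implications at once by noting that the two defects lie in the kernel, and you check that $\sigma(1)=0$ forces $h(1)=1$; these are small tidy-ups of points the paper leaves implicit.
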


\begin{proof}
(i) $\Rightarrow$ (ii):
To verify equality ($*$) we are going to calculate the coefficients of
$e_{(0, 1, 2)}$ in $h(a \cd a')$ and $h(a) \cd h(a')$. The first is easy:
\begin{equation} \label{eqn:376}
h(a \cd a') = \cdots + \bigl( e_{(0, 1, 2)} \ot \si(a \cd a') \bigr) .
\end{equation}
As for the other:
\begin{equation} \label{eqn:377}
\begin{aligned}
&
h(a) \cd h(a') =
\bigl( e_{(0)} \ot f_{(0)}(a)
+  e_{(0, 1)} \ot \ga_{(0, 1)}(a)
+ e_{(0, 1, 2)} \ot \si(a) + \cdots \bigr)
\\ & \quad
\cd \bigl(  e_{(2)} \ot f_{(2)}(a')
+   e_{(1, 2)} \ot \ga_{(1, 2)}(a')
+  e_{(0, 1, 2)} \ot \si(a') + \cdots \bigr)
\\ &
= e_{(0, 1, 2)} \ot \bigl( (-1)^{k \cd 2} \cd f_{(0)}(a) \cd \si(a')
+  (-1)^{k - 1} \cd \ga_{(0, 1)}(a) \cd \ga_{(1, 2)}(a')
\\ & \quad + \si(a) \cd f_{(2)}(a') \bigr) + \cdots
\\ &
= e_{(0, 1, 2)} \ot \bigl( f_{(0)}(a) \cd \si(a')
+  (-1)^{k - 1} \cd \ga_{(0, 1)}(a) \cd \ga_{(1, 2)}(a')
\\ & \quad
+ \si(a) \cd f_{(2)}(a') \bigr) + \cdots
\end{aligned}
\end{equation}
Since $h(a \cd a') = h(a) \cd h(a')$, we conclude that ($*$) holds.

To verify equality ($**$) we shall calculate the coefficients of
$e_{(0, 1, 2)}$ in $\d_{\opn{Cyl}} \circ h$ and $h \circ \d_A$.
Here are the calculations
\begin{equation} \label{eqn:370}
\begin{aligned}
&
(\d_{\opn{Cyl}} \circ h)(a)
\\ & \quad
= \d_{\opn{Cyl}} \bigl( \cdots +  \bigl( e_{(0, 1)} \ot \ga_{(0, 1)}(a) \bigr)
+ \bigl( e_{(0, 2)} \ot \ga_{(0, 2)}(a) \bigr)
+ \bigl( e_{(1, 2)} \ot \ga_{(1, 2)}(a) \bigr)
\\ & \quad
\quad + \bigl( e_{(0, 1, 2)} \ot \si(a) \bigr) \bigr)
\\ & \quad
= e_{(0, 1, 2)} \ot
\bigl( \ga_{(0, 1)}(a) - \ga_{(0, 2)}(a) + \ga_{(1, 2)}(a) +
\d_{B}(\si(a)) \bigr) + \cdots .
\end{aligned}
\end{equation}
and
\begin{equation} \label{eqn:371}
(h \circ \d_A)(a) = \cdots + e_{(0, 1, 2)} \ot \si(\d_A(a)) .
\end{equation}
Since
$\d_{\opn{Cyl}} \circ h = h \circ \d_A$,
we deduce the equality $(**)$.

\bigskip \noindent
(ii) $\Rightarrow$ (i):
We know that each
$\pa_i \circ h : A \to \opn{Cyl}_{1}(B)$
is a DG ring homomorphism. Hence
\[ (\pa_0 \times \pa_2 \times \pa_2) \circ h : A \to
\opn{Cyl}_{1}(B) \times \opn{Cyl}_{1}(B) \times \opn{Cyl}_{1}(B) \]
is a DG ring homomorphism.

Equation (\ref{eqn:353}) implies that $h$ is a graded ring homomorphism iff
the coefficients of $e_{(0, 1, 2)}$ in
$h(a \cd a')$ and $h(a) \cd h(a')$ are equal.
The calculations in (\ref{eqn:377}) and (\ref{eqn:376}), with the equality
($*$) in condition (ii), validate this assertion.

Similarly, $h$ is a strict homomorphism of DG abelian groups, i.e.\
$h \circ \d_A = \d_{\mrm{cyl}} \circ h$, iff the
coefficients of $e_{(0, 1, 2)}$ in
$(\d_{\opn{Cyl}} \circ h)(a)$ and $(h \circ \d_A)(a)$ are equal.
This is true by formulas (\ref{eqn:370}) and (\ref{eqn:371}), and
equality ($**$) in condition (ii).
\end{proof}

\section{Properties of the Hom Groupoid}
\label{sec:props-hom-grpd}

Given a semi-free DG ring $\til{A}$ and an arbitrary DG ring $B$,  the
simplicial set
\begin{equation} \label{eqn:435}
\opn{SHom}_{}(\til{A}, B) =
\{ \opn{Hom}_{\cat{DGRng}}(A, \opn{Cyl}_{q}(B)) \}_{q \in \N}
\end{equation}
is a Kan complex, by Theorem \ref{thm:215}. Its fundamental groupoid
\begin{equation} \label{eqn:436}
\opn{SHom}_{\leq 1}(\til{A}, B) =
\bpi_{\leq 1} (\opn{SHom}_{}(\til{A}, B))
\end{equation}
is called the Hom groupoid from $\til{A}$ to $B$.
This is Definition \ref{dfn:400}.
There is a simplicial description of this groupoid at the end of Section
\ref{sec:kan-cond}.

In this section we prove Theorems \ref{thm:270}, \ref{thm:271} and
\ref{thm:272} from the Introduction, on some properties of the Hom groupoid,
repeated here as Theorems \ref{thm:302},
\ref{thm:303} and \ref{thm:304}, respectively.

\begin{thm} \label{thm:302}
Let $\til{A}$ be a semi-free DG ring, and let $v : \til{B} \to B$ be a
surjective quasi-isomorphism of DG rings. Then the map of groupoids
\[ \opn{SHom}_{\leq 1}(\til{A}, \til{B}) \to
\opn{SHom}_{\leq 1}(\til{A}, B) \]
induced by $v$ is a surjective equivalence.
\end{thm}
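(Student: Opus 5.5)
We show that the functor $F := \bpi_{\leq 1}(v_*)$, induced by the map of simplicial sets $v_* : \opn{SHom}(\til{A}, \til{B}) \to \opn{SHom}(\til{A}, B)$ (composition with $\opn{Cyl}_q(v)$), is surjective on objects, full, and faithful. A functor with these three properties is a surjective equivalence. The uniform tool is Theorem \ref{thm:157}, applied to suitable surjective quasi-isomorphisms into limits, in the style of the proofs of Theorems \ref{thm:312} and \ref{thm:215}.

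\emph{Surjectivity on objects.} An object of the target is a DG ring homomorphism $f : \til{A} \to B$. Since $v$ is a surjective quasi-isomorphism and $\til{A}$ is semi-free, Theorem \ref{thm:157} gives a lift $\til{f} : \til{A} \to \til{B}$ with $v \circ \til{f} = f$.

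\emph{Fullness.} Let $\til{f}_0, \til{f}_1 : \til{A} \to \til{B}$ be given, and let $g \in S_1(v\til{f}_0, v\til{f}_1)$. As in the proof of Theorem \ref{thm:312}, form the DG ring
\[ D := \bmat{1 \ot \til{B} & y \ot B \\ 0 & 1 \ot \til{B}} , \]
using Proposition \ref{prop:425} with the system $(\til{B} \xar{v} B)$. This $D$ is the fiber product of $\til{B} \times \til{B}$ and $\opn{Cyl}_1(B)$ over $B \times B$. The map $w : \opn{Cyl}_1(\til{B}) \to D$ is surjective, and its kernel is $y \ot \opn{Ker}(v)$, which is acyclic. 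Hence $w$ is a surjective quasi-isomorphism. The data $(\til{f}_0, \til{f}_1, g)$ define a DG ring homomorphism $\til{A} \to D$, and Theorem \ref{thm:157} lifts it to $\til{g} : \til{A} \to \opn{Cyl}_1(\til{B})$ with boundaries $\til{f}_0, \til{f}_1$ and $v_*(\til{g}) = g$. Thus every morphism $[g]$ is hit on the nose.

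\emph{Faithfulness.} Let $\til{g}, \til{g}' \in S_1(\til{f}_0, \til{f}_1)$ satisfy $[v_*\til{g}] = [v_*\til{g}']$. Choose a $2$-simplex $h : \til{A} \to \opn{Cyl}_2(B)$ with $\pa_0 h = \opn{id}$, $\pa_1 h = v_*\til{g}'$ and $\pa_2 h = v_*\til{g}$. The boundary data $(\opn{id}_{\til{f}_1}, \til{g}', \til{g})$ form a map $\partial\Simp^2 \to \opn{SHom}(\til{A}, \til{B})$. We therefore generalize Lemma \ref{lem:152} and Theorem \ref{thm:152} to the boundary $\partial\Simp^2$, which is the colimit of three copies of $\Simp^1$ over $\Simp^0$'s. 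This shows that such boundary data correspond to homomorphisms into $\opn{N}(\partial\Simp^2, \til{B})$. Now put
\[ E := \opn{N}(\partial\Simp^2, \til{B}) \times_{\opn{N}(\partial\Simp^2, B)} \opn{Cyl}_2(B) , \]
so that the pair (boundary data, $h$) is a homomorphism $\til{A} \to E$. Concretely, $E$ is $\opn{Cyl}_2(\til{B})$ with the top coefficient $e_{(0,1,2)} \ot \til{B}$ replaced by $e_{(0,1,2)} \ot B$. The natural map $\opn{Cyl}_2(\til{B}) \to E$ is surjective with kernel $e_{(0,1,2)} \ot \opn{Ker}(v)$, which is acyclic (compare (\ref{eqn:353})), so it is a surjective quasi-isomorphism. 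Theorem \ref{thm:157} then gives a $2$-simplex $\til{h}$ in $\opn{SHom}(\til{A}, \til{B})$ with the prescribed faces, so $[\til{g}] = [\til{g}']$.

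The main obstacle is checking that $D$ and $E$ really are DG rings and that the maps into them are DG ring homomorphisms. For $D$ this is Proposition \ref{prop:425}. For $E$ it is cleanest to argue that it is a fiber product of DG rings along the surjection $\opn{N}(\partial\Simp^2, v)$, together with the boundary analogue of Lemma \ref{lem:152}, which holds by the same proof as Lemma \ref{lem:152}. Once this is verified, Theorem \ref{thm:390} confirms the explicit description of the top coefficient.
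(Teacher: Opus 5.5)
Your proposal is correct and is essentially the paper's proof: objects lift by Theorem \ref{thm:157}, and fullness and faithfulness come from lifting along the surjective quasi-isomorphisms $\opn{Cyl}_1(\til{B}) \to D$ and $\opn{Cyl}_2(\til{B}) \to E$, where your $D$ and $E$ are exactly the paper's intermediate DG rings (\ref{eqn:316}) and (\ref{eqn:322}). There are two small differences. First, you get the DG ring structure on $D$, $E$ and the homomorphism property of the maps into them from the fiber-product description, which costs you a $\partial\Simp^2$ analogue of Lemma \ref{lem:152}; the paper instead uses Proposition \ref{prop:425} and checks the top coefficient directly via Theorem \ref{thm:390}. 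Second, the paper reduces faithfulness to automorphism groups, whereas you treat arbitrary parallel pairs.
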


\begin{proof}
Let us write
\[ G := \opn{SHom}_{\leq 1}(\opn{id}_{\til{A}}, v) :
\opn{SHom}_{\leq 1}(\til{A}, \til{B}) \to \opn{SHom}_{\leq 1}(\til{A}, B) . \]
We need to prove that $G$ is a surjective equivalence of groupoids.

\medskip \noindent
Step 1. Here we prove the surjectivity of $G$ on objects.
Take an object $f$ of $\opn{SHom}_{\leq 1}(\til{A}, B)$.
This means that $f : \til{A} \to B$ is a DG ring homomorphism.
Since $v$ is a surjective quasi-isomorphism, according to Theorem
\ref{thm:157} there is a lifting
$\til{f} : \til{A} \to \til{B}$ of $f$.
In terms of the groupoids that says that
$f = G(\til{f})$.

\medskip \noindent
Step 2. Now we prove the surjectivity of $G$ on morphisms, i.e.\ that $G$ is a
full functor.
Take objects $\til{f}_0$ and $\til{f}_1$ in
$\opn{SHom}_{\leq 1}(\til{A}, \til{B})$,
and define
$f_i := G(\til{f}_i) = v \circ \til{f}_i$,
which are objects of $\opn{SHom}_{\leq 1}(\til{A}, B)$.
Suppose
$[g] : f_0 \to f_1$ is an isomorphism in the groupoid
$\opn{SHom}_{\leq 1}(\til{A}, B)$,
represented by $g \in \opn{SHom}_{1}(\til{A}, B)$.
The DG ring homomorphism $g : \til{A} \to \opn{Cyl}_{1}(B)$ corresponds to a
Keller homotopy
$\ga : f_0 \twoto f_1$, see Proposition \ref{prop:330}.

Consider the intermediate DG ring
\begin{equation} \label{eqn:316}
D := \bigl( \opn{Cyl}_{1}(\Z)^0 \ot_{\Z} \til{B} \bigr) \oplus
\bigl( \opn{Cyl}_{1}(\Z)^1 \ot_{\Z} B \bigr) ,
\end{equation}
as in Proposition \ref{prop:425}. In matrix notation it looks like this:
\[ D = \bmat{\til{B} & B[-1] \\ 0 & \til{B}}
= \bmat{1 \ot\til{B} & y \ot B \\ 0 & 1 \ot \til{B} }  , \]
cf.\ Definition \ref{dfn:332}.
There is a commutative diagram of DG rings
\[ \begin{tikzcd} [column sep = 8ex, row sep = 6ex]
\opn{Cyl}_{1}(\til{B})
\ar[rr, bend left = 20, "{\opn{Cyl}_{1}(v))}"]
\ar[r, "{\til{u}}" swap]
&
D
\ar[r, "{u}" swap]
&
\opn{Cyl}_{1}(B)
\end{tikzcd} \]
in which
\begin{equation} \label{eqn:321}
\til{u} :=
\bmat{\opn{id}_{\til{B}} & v[-1] \\ 0 & \opn{id}_{\til{B}}}
\quad \text{and} \quad
u := \bmat{v & \opn{id}_{B}[-1] & \\ 0 & v} .
\end{equation}
All three homomorphisms are surjective quasi-isomorphisms.

Define the intermediate DG ring homomorphism
\begin{equation} \label{eqn:315}
g^{D} : \til{A} \to D, \bmsp
g^{D} :=
\bmat{\til{f}_0 & y \ot \ga \\ 0 & \til{f}_1} .
\end{equation}
In terms of equation (\ref{eqn:316}), and the basis
$e_{(0)}$, $e_{(1)}$ and $e_{(0, 1)}$ of $\opn{Cyl}_{1}(\Z)$,
we can express $g^{D}$ as a sum
\begin{equation} \label{eqn:317}
g^{D} = (e_{(0)} \ot \til{f}_0) +
(e_{(1)} \ot \til{f}_1) + (e_{(0, 1)} \ot \ga) .
\end{equation}
The homomorphism $g^{D}$ satisfies
$u \circ g^{D} = g$.
Since $\til{u}$ is a surjective quasi-isomorphism, and $\til{A}$ is semi-free,
by Theorem \ref{thm:157} the homomorphism
$g^{D}$ lifts to a homomorphism
$\til{g} : \til{A} \to \opn{Cyl}_{1}(\til{B})$
such that
$\til{u} \circ \til{g} =  g^{D}$.
See the commutative diagram of DG rings
\[ \begin{tikzcd} [column sep = 10ex, row sep = 5ex]
\til{A}
\ar[r, dashed, "{\til{g}}"]
\ar[dr, "{g^{D}}", swap]
&
\opn{Cyl}_{1}(\til{B})
\ar[d, "{\til{u}}"]
\\
&
D
\end{tikzcd} \]
In matrix notation, $\til{g}$ looks like this:
\begin{equation} \label{eqn:431}
\til{g} =
\bmat{\til{f}_0 & y \ot \til{\ga} \\ 0 & \til{f}_1} ,
\end{equation}
where
$\til{\ga} : \til{A} \to \til{B}$
is a degree $-1$ homomorphism of graded abelian groups.
By Proposition \ref{prop:330}, $\til{\ga}$ is a Keller homotopy
$\til{\ga} : \til{f}_0 \twoto \til{f}_1$.
Thus $[\til{g}]$ is an isomorphism from $\til{f}_0$ to $\til{f}_1$
in the groupoid $\opn{SHom}_{\leq 1}(\til{A}, \til{B})$,
and $G([\til{g}]) = [g]$.

\medskip \noindent
Step 3. Now we prove the injectivity of $G$ on morphisms, i.e.\ that $G$ is a
faithful functor. For groupoids it is enough to verify injectivity on
automorphism groups. Namely, it suffices to prove that for every object
$\til{f}$ in $\opn{SHom}_{\leq 1}(\til{A}, \til{B})$,
with automorphism group $\opn{Aut}(\til{f})$, and with image
$f$ in $\opn{SHom}_{\leq 1}(\til{A}, B)$, the group homomorphism
$G : \opn{Aut}(\til{f}) \to \opn{Aut}(f)$ is injective.

Take elements
$[\til{g}], [\til{g}'] \in \opn{Aut}(\til{f})$,
with images
$[g], [g'] \in \opn{Aut}(f)$.
We use the usual matrix notation for $\til{g}, \til{g}', \ldots$,
but now with decorations, e.g.\
$\til{g}' =
\sbmat{\til{f} & y \ot \til{\ga}' \\ 0 & \til{f}}$.
Suppose that $[g] = [g']$ in $\opn{Aut}(f)$. This is the same as saying
that $[g] = [g']$ as morphisms in $\opn{SHom}_{\leq 1}(\til{A}, B)$.
By the definition of simplicial homotopy, see
\cite[Definition 1.4.3.1, tag =
\href{https://kerodon.net/tag/003V}{\tt 003V}]{Lu2}
and the discussion at the end of Section
\ref{sec:kan-cond}, there exists an element
$h \in \opn{SHom}_{2}(\til{A}, B)$
satisfying
$\pa_0(h) = \opn{id}_{f}$,
$\pa_1(h) = g$ and
$\pa_2(h) = g'$ in
$\opn{SHom}_{1}(\til{A}, B)$.
Note that
$\opn{id}_{f} = \opn{s}_0(f) =
\sbmat{f & \bmsp 0 \\ 0 & f}$.

The element $h$ is a DG ring homomorphism
$h : \til{A} \to \opn{Cyl}_{2}(B)$.
We can't express $h$ as matrix; but, like in formula (\ref{eqn:360}), we
can express $h$ as a sum
\begin{equation} \label{eqn:320}
\begin{aligned}
&
h =
(e_{(0)} \ot f) + (e_{(1)} \ot f) + (e_{(2)} \ot f)
\\ & \quad
+ (e_{(0, 1)} \ot \ga') + (e_{(0, 2)} \ot \ga)
+ (e_{(1, 2)} \ot 0)
\\ & \quad
+ (e_{(0, 1, 2)} \ot \si)  ,
\end{aligned}
\end{equation}
where
$\ga, \ga' \in \opn{Hom}_{\Z}(\til{A}, B)^{-1}$
and
$\si \in \opn{Hom}_{\Z}(\til{A}, B)^{-2}$.

Define the new intermediate DG ring $D$ to be
\begin{equation} \label{eqn:322}
D :=
\bigl( \opn{Cyl}_{2}(\Z)^0 \ot_{\Z} \til{B} \bigr)
\oplus
\bigl( \opn{Cyl}_{2}(\Z)^1 \ot_{\Z} \til{B} \bigr)
\oplus
\bigl( \opn{Cyl}_{2}(\Z)^2 \ot_{\Z} B \bigr)
\end{equation}
as in {Proposition} \ref{prop:425}, for the direct system of DG rings
$\til{B} \xar{\opn{id}} \til{B} \xar{v} B$.
In terms of basis elements, we have
\begin{equation} \label{eqn:324}
\begin{aligned}
&
D =
(e_{(0)} \ot \til{B}) \oplus (e_{(1)} \ot \til{B}) \oplus (e_{(2)} \ot \til{B})
\\ & \quad
\oplus (e_{(0, 1)} \ot \til{B}) \oplus (e_{(0, 2)} \ot \til{B})
\oplus (e_{(1, 2)} \ot \til{B})
\\ & \quad
\oplus (e_{(0, 1, 2)} \ot B) .
\end{aligned}
\end{equation}
There is a commutative diagram of DG rings
\[ \begin{tikzcd} [column sep = 8ex, row sep = 6ex]
\opn{Cyl}_{2}(\til{B})
\ar[rr, bend left = 20, "{\opn{Cyl}_{2}(v))}"]
\ar[r, "{\til{u}}" swap]
&
D
\ar[r, "{u}" swap]
&
\opn{Cyl}_{2}(B)
\end{tikzcd} \]
in which $\til{u}$ and $u$ are by now obvious (cf.\ equation (\ref{eqn:321})).

Define the intermediate graded abelian group homomorphism
$h^{D} : \til{A} \to D$ to be
\begin{equation} \label{eqn:323}
\begin{aligned}
&
h^{D} :=
(e_{(0)} \ot \til{f}) + (e_{(1)} \ot \til{f}) + (e_{(2)} \ot \til{f})
\\ & \quad
+ (e_{(0, 1)} \ot \til{\ga}') + (e_{(0, 2)} \ot \til{\ga})
+ (e_{(1, 2)} \ot 0)
\\ & \quad
+ (e_{(0, 1, 2)} \ot \si) .
\end{aligned}
\end{equation}
We claim that $h^{D}$ is a DG $A$-ring homomorphism.

By construction we have an equality
$u \circ h^{D} = h$ of graded abelian group homomorphisms $\til{A} \to
\opn{Cyl}_2(B)$, see formulas (\ref{eqn:320}) and (\ref{eqn:323}).
Here is the commutative diagram displaying it:
\begin{equation} \label{eqn:451}
\begin{tikzcd} [column sep = 10ex, row sep = 5ex]
\til{A}
\ar[r, "{h^D}"]
\ar[dr, "{h}", swap]
&
D
\ar[d, "{u}"]
\\
&
\opn{Cyl}_{2}(B)
\end{tikzcd}
\end{equation}
Recall that $h$ is a DG ring homomorphism.
We also have the equalities
\[ \pa_0 \circ h^D = \opn{id}_{\til{f}} =
\bmat{\til{f} & 0 \\ 0 & \til{f}} , \
\pa_1 \circ h^D = \til{g} =
\bmat{\til{f} & y \ot \til{\ga} \\ 0 & \til{f}} , \
\pa_2 \circ h^D = \til{g}' =
\bmat{\til{f} & y \ot \til{\ga}' \\ 0 & \til{f}} . \]
These are DG $A$-ring homomorphisms $\til{A} \to \opn{Cyl}_{1}(B)$.
Like in the proof of the implication (ii) $\twoto$ (i) in Theorem
\ref{thm:390}, it remains to prove that $\si$ satisfies equations ($*$) and
($**$) there, applied to $h^{D}$.

From formula (\ref{eqn:320}) we see that $\si$ is also the
coefficient of $e_{(0, 1, 2)}$ in the DG ring homomorphism $h$.
The implication (i) $\twoto$ (ii) in Theorem
\ref{thm:390}, applied to $h$, tells us that $\si$ satisfies equations ($*$) and
($**$) there. Conclusion: $h^D$ is a DG $A$-ring homomorphism.

Since $\til{u}$ is a surjective quasi-isomorphism, and $\til{A}$ is semi-free,
by Theorem \ref{thm:157} the DG $A$-ring homomorphism
$h^{D}$ lifts to a homomorphism
$\til{h} : \til{A} \to \opn{Cyl}_{2}(\til{B})$
such that
$\til{u} \circ \til{h}^{} =  h^{D}$.
See the commutative diagram of DG rings
\[ \begin{tikzcd} [column sep = 10ex, row sep = 5ex]
\til{A}
\ar[r, dashed, "{\til{h}}"]
\ar[dr, "{h^{D}}", swap]
&
\opn{Cyl}_{2}(\til{B})
\ar[d, "{\til{u}}"]
\\
&
D
\end{tikzcd} \]
In coordinates we have
\begin{equation} \label{eqn:325}
\begin{aligned}
&
\til{h}_{} =
(e_{(0)} \ot \til{f}) + (e_{(1)} \ot \til{f}) + (e_{(2)} \ot \til{f})
\\ & \quad
+ (e_{(0, 1)} \ot \til{\ga}') + (e_{(0, 2)} \ot \til{\ga})
+ (e_{(1, 2)} \ot 0)
\\ & \quad
+ (e_{(0, 1, 2)} \ot \til{\si}) ,
\end{aligned}
\end{equation}
where
$\til{\si} : \til{A} \to \til{B}$
is a degree $-2$ homomorphism of graded abelian groups.
The element $\til{h}$ belongs to
$\opn{SHom}_{2}(\til{A}, \til{B})$.
It satisfies
$\pa_0(\til{h}) = \opn{id}_{\til{f}}$,
$\pa_1(\til{h}) = \til{g}$ and
$\pa_2(\til{h}) = \til{g}'$
in $\opn{SHom}_{1}(\til{A}, \til{B})$.
Therefore
$[g] = [g']$ as isomorphisms in
$\opn{SHom}_{\leq 1}(\til{A}, B)$;
or in other words, as elements of $\opn{Aut}(\til{f})$.
\end{proof}

\begin{rem} \label{rem:385}
Theorem \ref{thm:302} can probably be made a bit stronger: if $v$ is a
quasi-iso\-morphism (not necessarily surjective) then
$\opn{SHom}_{\leq 1}(\opn{id}_{\til{A}}, v)$
is an equivalence.
\end{rem}

Here is a copy of Theorem \ref{thm:271}.

\begin{thm} \label{thm:303}
Let $f : A \to B$ be a DG ring homomorphism, let
$u : \til{A} \to A$ and $v : \til{B} \to B$
be semi-free resolutions, and let
$\{ \til{f}_i \}_{i \in I}$ be the collection of all liftings
$\til{f}_i : \til{A} \to \til{B}$ of $f$ with respect to $u$ and $v$.
Then\tup{:}
\begin{enumerate}
\item For every $i, j \in I$ there is a distinguished isomorphism
$[\til{g}_{i, j}] : \til{f}_i \to \til{f}_j$ in the groupoid
$\opn{SHom}_{\leq 1}(\til{A}, \til{B})$.

\item The collection of distinguished isomorphisms
$\{ [\til{g}_{i, j}] \}_{i, j \in I}$
satisfies
$[\til{g}_{j, k}] \cd [\til{g}_{i, j}] = [\til{g}_{i, k}]$
for all $i, j, k \in I$, and  $[\til{g}_{i, i}] = \opn{id}_{\til{f}_i}$.
\end{enumerate}
\end{thm}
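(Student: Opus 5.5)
The plan is to single out, among all morphisms $\til{f}_i \to \til{f}_j$ in $\opn{SHom}_{\leq 1}(\til{A}, \til{B})$, those represented by a $1$-simplex $\til{g} \in \opn{SHom}_1(\til{A}, \til{B})$ whose image under $v$ is the identity $\opn{id}_f$. Here $f$ means the composite homomorphism $f \circ u = v \circ \til{f}_i : \til{A} \to B$, and $\opn{id}_f = \opn{s}_0(f \circ u)$. In Keller terms, these are the Keller homotopies $\til{\ga} : \til{f}_i \twoto \til{f}_j$ with $v \circ \til{\ga} = 0$.

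\textbf{Existence.} Theorem \ref{thm:312} gives existence directly. It applies to the semi-free DG ring $\til{A}$, the surjective quasi-isomorphism $v$, the homomorphism $f \circ u : \til{A} \to B$, and its two liftings $\til{f}_i, \til{f}_j$. It produces a Keller homotopy $\til{\ga}_{i,j} : \til{f}_i \twoto \til{f}_j$ with $v \circ \til{\ga}_{i,j} = 0$. Let $\til{g}_{i,j}$ be the corresponding element of $\opn{SHom}_1(\til{A}, \til{B})$, via Proposition \ref{prop:330} and Proposition \ref{prop:215}. It satisfies $\opn{Cyl}_1(v) \circ \til{g}_{i,j} = \opn{id}_{f \circ u}$. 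For $i = j$ we choose $\til{g}_{i,i} := \opn{id}_{\til{f}_i}$, which is admissible since its homotopy part is $0$.

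\textbf{Independence of the choice.} To say that $[\til{g}_{i,j}]$ is distinguished, I must show that any two such admissible $\til{g}, \til{g}'$ are simplicially homotopic. For this I use Theorem \ref{thm:302}, specifically the faithfulness established in its Step 3. The functor $G$ induced by $v$ sends both $[\til{g}]$ and $[\til{g}']$ to $[\opn{id}_{f \circ u}]$. Faithfulness gives $[\til{g}] = [\til{g}']$. Strictly, Step 3 is stated for automorphisms, but faithfulness of a functor of groupoids on all Hom sets follows: compose $[\til{g}'']^{-1} \cd [\til{g}]$ and $[\til{g}'']^{-1} \cd [\til{g}']$ in $\opn{Aut}(\til{f}_i)$, where $\til{g}''$ is one admissible representative. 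Taking $\til{g}' = \opn{id}_{\til{f}_i}$ when $i = j$ yields $[\til{g}_{i,i}] = \opn{id}_{\til{f}_i}$.

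\textbf{Cocycle identity.} The composite $[\til{g}_{j,k}] \cd [\til{g}_{i,j}]$ maps under $G$ to $[\opn{id}_{f\circ u}] \cd [\opn{id}_{f\circ u}] = [\opn{id}_{f \circ u}]$, which is also $G([\til{g}_{i,k}])$. Faithfulness of $G$ then gives equality. Alternatively, one can check directly that the sum $\til{\ga}_{i,j} + \til{\ga}_{j,k}$ is an admissible Keller homotopy representing the composite, by filling a $2$-simplex with $\si = 0$ in Theorem \ref{thm:390}. However, the term $\ga_{(0,1)} \cd \ga_{(1,2)}$ in ($*$) obstructs this, so the faithfulness argument is cleaner.

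\textbf{Main obstacle.} The expected main obstacle is the passage from "$G$ is injective on automorphism groups" to "$G$ is injective on all Hom sets". Closely tied to it is making sure the distinguished class is defined intrinsically, as the unique preimage of $[\opn{id}_{f\circ u}]$, rather than depending on the choice made via Theorem \ref{thm:312}. Once $[\til{g}_{i,j}]$ is characterized as the unique morphism $\til{f}_i \to \til{f}_j$ mapping to the identity of $f \circ u$, both items follow formally.
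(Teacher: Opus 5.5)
Your proposal is correct and follows essentially the paper's proof: the distinguished isomorphism is characterized as the unique morphism $\til{f}_i \to \til{f}_j$ sent to $\opn{id}_{f \circ u}$ by the map induced by $v$ (Theorem \ref{thm:302}), and the unit and composition identities then follow from uniqueness. The only cosmetic difference is that you get existence from Theorem \ref{thm:312} instead of from fullness of $G$. That is the same lifting argument as Step 2 of Theorem \ref{thm:302} specialized to $\ga = 0$.
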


Here is the commutative diagram depicting the liftings:
\begin{equation} \label{eqn:437}
\begin{tikzcd} [column sep = 12ex, row sep = 6ex]
\til{A}
\ar[d, "{u}" swap]
\ar[r, dashed, "{\til{f}^{}_{i}}"]
\ar[dr, "{f \circ u}"]
&
\til{B}
\ar[d, "{v}"]
\\
A
\ar[r, "{f}" swap]
&
B
\end{tikzcd}
\end{equation}

\begin{proof}
Let's introduce the abbreviation
$S(\til{A}, \til{B}) := \opn{SHom}_{}(\til{A}, \til{B})$
for this Kan simplicial set. With this notation, the Hom groupoid is
$S_{\leq 1}(\til{A}, \til{B})$.

The liftings we are looking at are the DG ring homomorphisms
$\til{f}_i : \til{A} \to \til{B}$ such that
$v \circ \til{f}_i = f \circ u$.
See diagram (\ref{eqn:437}).
By Theorem \ref{thm:302} there is a bijection
\begin{equation} \label{eqn:385}
\opn{S}_{\leq 1}(\til{A}, \til{B})(\til{f}_i, \til{f}_j)
\iso \opn{S}_{\leq 1}(\til{A}, B)(f \circ u, f \circ u)
\end{equation}
induced by $v$.
Define
$[\til{g}_{i, j}] : \til{f}_i \to \til{f}_j$
to be the unique isomorphism in the groupoid
$\opn{S}_{\leq 1}(\til{A}, \til{B})$
that goes by (\ref{eqn:385}) to
$\opn{id}_{f \circ u} \in \opn{S}_{\leq 1}(\til{A}, B)$.

Given indices $i, j, k \in I$, the composed isomorphism
$[\til{g}_{j, k}] \cd [\til{g}_{i, j}]$ in
$\opn{S}_{\leq 1}(\til{A}, \til{B})(\til{f}_i, \til{f}_k)$
goes by (\ref{eqn:385}) to $\opn{id}_{f \circ u}$.
Therefore, by the uniqueness, there is equality
$[\til{g}_{j, k}] \cd [\til{g}_{i, j}] = [\til{g}_{i, k}]$.
Likewise for $[\til{g}_{i, i}]$.
\end{proof}

Here is an interlude about derivations.
Let $A$ be a graded ring and $M$ a graded $A$-bimodule. A degree $i$ derivation
$\phi : A \to  M$ is a degree $i$ homomorphism of graded abelian groups,
such that
$\phi(a \cd b) = \phi(a) \cd b + (-1)^{i \cd k} \cd a \cd \phi(b)$
for all $a \in A^k$ and $b \in A^l$. The set
$\opn{Der}_A(M)^i$ of degree $i$ derivations is a subgroup of the abelian group
$\opn{Hom}_{\Z}(A, M)^i$. So
$\opn{Der}_A(M) := \boplus_{i \in \Z} \opn{Der}_A(M)^i$
is a graded subgroup of $\opn{Hom}_{\Z}(A, M)$.

If $A$ is a DG ring and $M$ is a DG $A$-bimodule, then
$\opn{Hom}_{\Z}(A, M)$ is a DG abelian group, with differential
$\d_{\opn{Hom}}(\phi) := \d_M \circ \phi - (-1)^i \cd \phi \circ \d_A$
for $\phi \in \opn{Hom}_{\Z}(A, M)^i$.

\begin{prop} \label{prop:435}
Let $A$ be a DG ring and $M$ a DG $A$-bimodule. Then $\opn{Der}_A(M)$
is a DG abelian subgroup of $\opn{Hom}_{\Z}(A, M)$.
\end{prop}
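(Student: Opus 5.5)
The claim is that $\opn{Der}_A(M)$ is closed under the differential $\d_{\opn{Hom}}$. Since $\opn{Der}_A(M)$ is already a graded subgroup of $\opn{Hom}_{\Z}(A, M)$, it suffices to show that if $\phi$ is a degree $i$ derivation, then $\psi := \d_{\opn{Hom}}(\phi) = \d_M \circ \phi - (-1)^i \cd \phi \circ \d_A$ is a degree $i+1$ derivation. That is, I will prove
\[ \psi(a \cd b) = \psi(a) \cd b + (-1)^{(i+1) \cd k} \cd a \cd \psi(b) \]
for homogeneous $a \in A^k$ and $b \in A^l$. The identity $\d_{\opn{Hom}} \circ \d_{\opn{Hom}} = 0$ is inherited from $\opn{Hom}_{\Z}(A, M)$, so nothing else needs checking.

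The argument is a direct computation, in the following order.

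First expand $\d_M(\phi(a \cd b))$. Apply the derivation rule for $\phi$, and then the graded Leibniz rule for $\d_M$ on each product. Here $\phi(a) \in M^{i+k}$, and the bimodule structure is compatible with $\d$. This gives four terms:
\[ \d_M(\phi(a)) \cd b + (-1)^{i+k} \phi(a) \cd \d_A(b) + (-1)^{ik}\d_A(a) \cd \phi(b) + (-1)^{ik+k} a \cd \d_M(\phi(b)) . \]

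Second, expand $\phi(\d_A(a \cd b)) = \phi(\d_A(a) \cd b) + (-1)^k \phi(a \cd \d_A(b))$. Since $\d_A(a) \in A^{k+1}$, the derivation rule gives four terms:
\[ \phi(\d_A a) \cd b + (-1)^{i(k+1)} \d_A(a) \cd \phi(b) + (-1)^k \phi(a) \cd \d_A(b) + (-1)^{k+ik} a \cd \phi(\d_A b) . \]

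Now multiply the second expansion by $-(-1)^i$ and add it to the first.

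The main obstacle is sign bookkeeping, namely checking that the cross terms cancel. The terms $\phi(a) \cd \d_A(b)$ carry coefficients $(-1)^{i+k}$ and $-(-1)^{i+k}$, so they cancel. The terms $\d_A(a) \cd \phi(b)$ carry $(-1)^{ik}$ and $-(-1)^{i}(-1)^{ik+i} = -(-1)^{ik}$, so they also cancel.

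The remaining terms regroup as $\psi(a) \cd b$ and
\[ (-1)^{ik+k}\bigl(a \cd \d_M\phi(b) - (-1)^i a \cd \phi(\d_A b)\bigr) = (-1)^{(i+1)k} a \cd \psi(b) , \]
which is exactly the required formula.
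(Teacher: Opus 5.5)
Your proposal is correct and takes the same route as the paper: expand $\d_{\opn{Hom}}(\phi)(a \cd b)$ into eight terms, cancel the four cross terms $\phi(a) \cd \d_A(b)$ and $\d_A(a) \cd \phi(b)$, and regroup the remainder as $\psi(a) \cd b + (-1)^{(i+1) \cd k} \cd a \cd \psi(b)$. Your signs check out, including the last term $(-1)^{1 + i + k \cd (i+1)} \cd a \cd \phi(\d_A(b))$, which matches the one written in the paper.
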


\begin{proof}
This fact seems to be standard, yet we could not find a published proof. So
here is an outline of the proof.
Take $\phi \in \opn{Der}_A(M)^i$. For $a \in A^k$ and $b \in A^l$ we
expand $\d_{\opn{Hom}}(\phi)(a \cd b)$ into a sum of $8$ terms:
\[ \d_{\opn{Hom}}(\phi)(a \cd b) =
\d_M(\phi(a)) \cd b + \cdots
+ (-1)^{1 + i + k \cdot (i + 1)} \cd a \cd \phi(\d_A(b)) . \]
Of these terms, $4$ cancel out, and the rest are collected
into the sum
\[ \d_{\opn{Hom}}(\phi)(a) \cd b +
(-1)^{k \cdot (i + 1)} \cd a \cd \d_{\opn{Hom}}(\phi)(b) .  \]
This means that
$\d_{\opn{Hom}}(\phi) \in \opn{Der}_A(M)^{i + 1}$.
\end{proof}

Given a DG ring homomorphism $f : A \to  B$, we denote by
$\opn{Der}_{A, f}(B)$ the DG abelian group of derivations, where $B$ is made
into a DG $A$-bimodule through $f$.

The next theorem contains  Theorem \ref{thm:272} from the Introduction, and
more.

\begin{thm} \label{thm:304}
Let $\til{A}$ be a semi-free DG ring, let $B$ be some DG ring, and let
$f : \til{A} \to B$ be a DG ring homomorphism.
Then the group $\opn{Aut}(f)$, the automorphism group of $f$ as an object of the
groupoid $\opn{SHom}_{\leq 1}(\til{A}, B)$, is canonically isomorphic to
$\opn{H}^{-1}(\opn{Der}_{\til{A}, f}(B))$ as groups.
In particular, $\opn{Aut}(f)$ is an abelian group.
\end{thm}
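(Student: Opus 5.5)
We will build an explicit map $\Phi : \opn{Aut}(f) \to \opn{H}^{-1}(\opn{Der}_{\til{A}, f}(B))$ and show it is a well-defined group isomorphism.

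\textbf{The cocycles.} An element of $\opn{Aut}(f)$ is a class $[g]$ with $g \in \opn{SHom}_1(\til{A}, B)$ and $\pa_0(g) = \pa_1(g) = f$. By Proposition \ref{prop:330} this is the same as $g = \sbmat{f & y \cd \ga \\ 0 & f}$, where $\ga : f \twoto f$ is a Keller homotopy. Condition (i) of Definition \ref{dfn:330} with $f_0 = f_1 = f$ says exactly that $\ga \in \opn{Der}_{\til{A}, f}(B)^{-1}$. Condition (ii) says $\d_B \circ \ga + \ga \circ \d_{\til{A}} = 0$, which is $\d_{\opn{Hom}}(\ga) = 0$ for $\ga$ of degree $-1$. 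So $\ga$ is a $(-1)$-cocycle of the DG abelian group $\opn{Der}_{\til{A}, f}(B)$, which is a DG subgroup by Proposition \ref{prop:435}. Conversely, every such cocycle gives an element of $S_1(f, f)$. We set $\Phi([g]) := [\ga]$.

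\textbf{Well-definedness, injectivity and the group law.} All three come from Theorem \ref{thm:390} applied to a $2$-simplex $h$ whose vertices all equal $f$. Since $\til{A}$ is semi-free, $\opn{SHom}(\til{A}, B)$ is a Kan complex by Theorem \ref{thm:215}, so the composition in $\bpi_{\leq 1}$ is realized by $2$-simplices. Such an $h$ has edge data $\ga_{(0,1)}, \ga_{(0,2)}, \ga_{(1,2)}$ and a coefficient $\si$ of degree $-2$.
\begin{itemize}
\item Condition ($*$) reads $\si(a a') = \si(a) f(a') + f(a)\si(a') + (-1)^{k-1}\ga_{(0,1)}(a)\ga_{(1,2)}(a')$.
\item Condition ($**$) reads $\si \circ \d - \d \circ \si = \ga_{(0,1)} - \ga_{(0,2)} + \ga_{(1,2)}$.
\end{itemize}
For the simplicial homotopy of Section \ref{sec:kan-cond}, take $\ga_{(1,2)} = 0$. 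Then ($*$) says $\si$ is a degree $-2$ derivation, and ($**$) says $\ga_{(0,2)} - \ga_{(0,1)} = \pm\d_{\opn{Hom}}(\si)$. So homotopic $g, g'$ give cohomologous $\ga, \ga'$, and $\Phi$ is well defined.

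Conversely, if $\ga - \ga' = \d_{\opn{Hom}}(\si)$ with $\si \in \opn{Der}^{-2}$, then Theorem \ref{thm:390}(ii)$\Rightarrow$(i) assembles $h$ from $\si$ and shows it is a DG ring homomorphism. Hence $[g] = [g']$ and $\Phi$ is injective.

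For composition, take $h$ with $\pa_2 h = g_1$ and $\pa_0 h = g_2$. The term $\ga_{(0,1)}(a)\ga_{(1,2)}(a')$ in ($*$) means $\si$ is no longer a derivation. This is the main obstacle.

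\textbf{Surjectivity of composition data and additivity.} I would first try to avoid constructing a derivation $\si$ directly. Instead, use the Kan property to produce some $h$ with the prescribed edges $\pa_2 h = g_1$ and $\pa_0 h = g_2$; its third edge $\pa_1 h$ is then a composite. What must be shown is that $\ga_{(0,2)} \equiv \ga_{(0,1)} + \ga_{(1,2)}$ modulo $\d_{\opn{Hom}}(\opn{Der}^{-2})$.

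To get this, compare $\si$ with a second $2$-simplex $h'$ whose edges are $\ga_{(0,1)}$, $\ga_{(0,1)} + \ga_{(1,2)}$ and $\ga_{(1,2)}$. Its coefficient $\si'$ satisfies the same ($*$). Then $\si - \si'$ satisfies the homogeneous version of ($*$), so it is a derivation of degree $-2$. Applying ($**$) to both gives $\ga_{(0,2)} - (\ga_{(0,1)} + \ga_{(1,2)}) = \pm\d_{\opn{Hom}}(\si - \si')$. So $\Phi$ is additive, provided $h'$ exists.

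Existence of $h'$ is the crux. We need a degree $-2$ map $\si'$ satisfying the twisted Leibniz rule ($*$) and ($**$). Since $\til{A}$ is semi-free, define $\si'$ on the free generators $F_j(X)$ by induction on $j$ and extend by ($*$), which is possible by \cite[Lemma 12.8.10]{Ye2}-type freeness. Then check ($**$) generator by generator, using that $\ga_{(0,1)}$ and $\ga_{(1,2)}$ are cocycles. If this induction is awkward, the fallback is to lift a partially defined $h'$ through a surjective quasi-isomorphism built as in Proposition \ref{prop:425}, and invoke Theorem \ref{thm:157}, in the style of the proof of Theorem \ref{thm:302}.

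\textbf{Conclusion.} $\Phi$ is a well-defined, injective, additive map, and it is surjective because every cocycle gives an element of $S_1(f,f)$. So it is a group isomorphism onto $\opn{H}^{-1}(\opn{Der}_{\til{A}, f}(B))$. Since the target is an abelian group, $\opn{Aut}(f)$ is abelian.
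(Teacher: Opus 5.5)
Your first half is correct and matches the paper's Steps 2--4. Keller homotopies $f \twoto f$ are exactly the $(-1)$-cocycles of $\opn{Der}_{\til{A}, f}(B)$, and Theorem \ref{thm:390} with $\ga_{(1,2)} = 0$ identifies simplicial homotopy with cohomology by a degree $-2$ derivation. This gives a canonical bijection $\opn{Aut}(f) \to \opn{H}^{-1}(\opn{Der}_{\til{A}, f}(B))$. You also correctly located the difficulty that the paper's Step 5 passes over. When $\ga_{(0,1)}$ and $\ga_{(1,2)}$ are both nonzero, the cross term in ($*$) means $\si \notin \opn{Der}_{\til{A}, f}(B)^{-2}$. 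So $\d_{\opn{Hom}}(\si) = \pm (\ga - \ga'' + \ga')$ does not place $\ga + \ga' - \ga''$ in $\opn{B}^{-1}(\opn{Der}_{\til{A}, f}(B))$.

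Your remedy, however, fails. The $2$-simplex $h'$ needs a $\si'$ satisfying ($*$) together with $\si' \circ \d_{\til{A}} = \d_B \circ \si'$. In your induction on generators, the value $\si'(\d x)$ is forced by ($*$) and is only known to be a cocycle of $B$, whereas you need it to be a coboundary. Invariantly, take any degree $-2$ map $\si_0$ satisfying ($*$); these exist by freeness. Then $\rho := \si_0 \circ \d_{\til{A}} - \d_B \circ \si_0$ is a degree $-1$ cocycle derivation whose class does not depend on $\si_0$. Every filler of the horn has third edge in the class $[\ga_{(0,1)}] + [\ga_{(1,2)}] - [\rho]$, and your $h'$ exists if and only if $[\rho] = 0$. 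No lifting through Theorem \ref{thm:157} can produce $h'$ when it does not exist.

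And $[\rho]$ is nonzero in general, so the gap cannot be closed: the group-theoretic claim is false. Let $\til{A}$ be the free graded ring on $x_1, x_2, z$, all of degree $1$, with $\d x_1 = \d x_2 = 0$ and $\d z = x_1 x_2$. This is semi-free, with $F_0(X) = \{ x_1, x_2 \}$. Let $B := \Z$ and let $f$ be the augmentation. Every degree $-1$ map $\til{A} \to \Z$ is a cocycle $f$-derivation, determined by its values on $x_1, x_2, z$. On the other hand $\opn{Der}_{\til{A}, f}(\Z)^{-2} = 0$, since $\til{A}^2 = \til{A}^1 \cd \til{A}^1$ and $f(\til{A}^1) = 0$. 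Thus $\opn{H}^{-1}(\opn{Der}_{\til{A}, f}(\Z)) \cong \Z^3$. For a $2$-simplex with all vertices $f$, ($*$) forces $\si(x_1 x_2) = \ga_{(0,1)}(x_1) \cd \ga_{(1,2)}(x_2)$, and then ($**$) on $x_1, x_2, z$ forces
\[ \ga_{(0,2)} = \ga_{(0,1)} + \ga_{(1,2)} - \ga_{(0,1)}(x_1) \cd \ga_{(1,2)}(x_2) \cd \delta_z , \]
where $\delta_z$ is the derivation equal to $1$ on $z$ and $0$ on $x_1, x_2$. Conversely, by Theorem \ref{thm:390}, any such data is a $2$-simplex. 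So simplicial homotopy is equality here. Take $\al$ dual to $x_1$ and $\be$ dual to $x_2$. The composite of $\al$ followed by $\be$ is $\al + \be - \delta_z$, while that of $\be$ followed by $\al$ is $\al + \be$. Hence $\opn{Aut}(f)$ is the integral Heisenberg group, which is not abelian. What survives of the theorem is the canonical bijection of sets, with the twisted composition law described above. The paper's Step 5 makes exactly the unjustified step you flagged.
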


\begin{proof}
The proof is broken up into five steps.

\medskip \noindent
Step 1.
Let  us write
$S_1 := \opn{SHom}_{1}(\til{A}, B)(f, f)$.
This is the set of DG ring homomorphisms
$g : \til{A} \to \opn{Cyl}_{1}(B)$
satisfying $\pa_0(g) = \pa_1(g) = f$.

In step 2 of the proof we will produce a canonical bijection $\Psi$
between $S_1$ and the abelian group
\begin{equation} \label{eqn:438}
G := \opn{Z}^{-1}(\opn{Der}_{\til{A}, f}(B)) ,
\end{equation}
the degree $-1$ cocycles of $\opn{Der}_{\til{A}, f}(B)$.
This bijection takes $\opn{id}_f \in S_1$ to $0 \in G$.

In step 3 we will prove that under the bijection $\Psi$,
the set
$\{ g \in S_1 \mid [g] = [\opn{id}_f] \}$
goes to the subgroup
\begin{equation} \label{eqn:439}
H := \opn{B}^{-1}(\opn{Der}_{\til{A}, f}(B)) ,
\end{equation}
the degree $-1$ coboundaries of $\opn{Der}_{\til{A}, f}(B)$.

In step 4 we will prove that for
$g, g' \in S_1$, with $\ga := \Psi(g)$ and $\ga' := \Psi(g')$,
there is equality $[g] = [g']$ iff $\ga' \in \ga + H$.
This means that the function $\Psi$ induces a bijection of sets
$\bar{\Psi}$ between $\opn{Aut}(f)$ and
$G / H = \opn{H}^{-1}(\opn{Der}_{\til{A}, f}(B))$.

In step 5 we will show that the multiplication in
$\opn{Aut}(f)$ corresponds, under the canonical bijection $\bar{\Psi}$, to
addition in the
abelian group $G / H$. Thus
$\bar{\Psi} : \opn{Aut}(f) \to G / H$ is a group isomorphism.

\bigskip \noindent
Step 2.
Consider an element $g \in S_1$. As a matrix we have
\begin{equation} \label{eqn:380}
g := \bmat{f & \bmsp y \cd \ga \\ 0 & f}
\end{equation}
where $\ga : f \twoto f$ is a Keller homotopy.
By Definition \ref{dfn:330}, this means that
$\ga \in \opn{Der}_{\til{A}, f}(B)^{-1}$
and
$\d_{\opn{Hom}}(\ga) = f - f = 0$.
We see that $\ga \in G$, the group from formula (\ref{eqn:438}).
And vice versa.
So there is a bijection
$\Psi : S_1 \to G$, $g \mapsto \ga$.
It sends $\opn{id}_{f} \in S_1$ to $0 \in G$.

\bigskip \noindent
Step 3. Suppose $g \in S_1$ is simplicially homotopic to $\opn{id}_{f}$. This
means that there exists some
$h \in \opn{SHom}_{2}(\til{A}, B)$
such that
$\pa_0(h) = \opn{id}_{f}$,
$\pa_1(h) = \opn{id}_{f}$, and
$\pa_2(h) = g$.
Let us express $g$ as in formula (\ref{eqn:380}).
Then we can write $h$ uniquely as a sum, like in formula (\ref{eqn:360}),
with $f_{(0)} = f_{(1)} = f_{(1)} = f$,
$\ga_{(0, 1)} = \ga$,
$\ga_{(0, 2)} = 0$,
$\ga_{(1, 2)} = 0$,
and
$\si \in \opn{Hom}_{\Z}(A, B)^{-2}$.
Because $\ga_{(0, 2)} = \ga_{(1, 2)} = 0$,
Theorem \ref{thm:390} says that
$\si \in \opn{Der}_{\til{A}, f}(B)^{-2}$ and
$\d_{\opn{Hom}}(\si) = \ga$.
Conversely, any such $\si$ gives rise to $h$ as above.
The conclusion is that
$g$ is simplicially homotopic to $\opn{id}_{f}$
iff $\Psi(g) = \ga \in H$, see formula (\ref{eqn:439}).

\bigskip \noindent
Step 4.
Suppose $g$ is simplicially homotopic to $g'$ in $S_1$.
This means that there exists some $h \in S_2$ such that
$\pa_0(h) = \opn{id}_{f}$,
$\pa_1(h) = g'$, and
$\pa_2(h) = g$.

Expressing $h$ as in formula (\ref{eqn:360}),
with
$f_{(0)} = f_{(1)} = f_{(2)} = f$
and
$\ga_{(0, 1)} = \ga$,
$\ga_{(0, 2)} = \ga'$ and
$\ga_{(1, 2)} = 0$,
Theorem \ref{thm:390} says that
$\si \in \opn{Der}_{\til{A}, f}(B)^{-2}$ and
$\d_{\opn{Hom}}(\si) = \ga - \ga'$.
According to step 3, this says that
$\ga' \in \ga + H$.
Again, any such $\si$ gives rise to $h$ as above. Thus
$[g] = [g']$ if and only if
$\Psi(g') \in \Psi(g) + H$.

\bigskip \noindent
Step 5.
Finally, let
$g, g', g'' \in S_1$
be such that their simplicial homotopy classes satisfy
$[g''] = [g] \cd [g']$ in $S_{\leq 1}$.
This means that there is some $h \in S_2$
such that
$\pa_0(h) = g$,
$\pa_1(h) = g''$, and
$\pa_2(h) = g'$.

With the notation of formula (\ref{eqn:360}), and using Theorem \ref{thm:390},
we see that
$\d_{\opn{Hom}}(\si) = \ga - \ga'' + \ga'$ in $G$.
So
$(\ga + \ga') - \ga'' \in H$,
meaning that
$[\ga] + [\ga'] = [\ga'']$ in the group $G / H$.
\end{proof}


\end{document}